\newenvironment{commentM}{\begingroup \sffamily [{\bf Mikhail:\ }}{]\endgroup}
\def\bM{\begin{commentM}}
\def\eM{\end{commentM}\ }
\theoremstyle{plain}
\newtheorem{lemma}{Lemma} [section]
\newtheorem{proposition}[lemma]{Proposition}
\newtheorem{theorem}[lemma]{Theorem}
\newtheorem{corollary}[lemma]{Corollary}
\theoremstyle{definition}
\newtheorem{example}[lemma]{Example}
\newtheorem{definition}[lemma]{Definition}
\newtheorem{construction}[lemma]{Construction}
\newtheorem{notation}[lemma]{Notation}
\theoremstyle{definition}
\newtheorem{remark}[lemma]{Remark}
\numberwithin{equation}{section}
\begin{document}


\def\noi{\noindent}
\def\R{{\mathbb{R}}}
\def\Z{{\mathbb{Z}}}
\def\C{{\mathbb{C}}}
\def\Q{{\mathbb{Q}}}
\def\F{{\mathbb{F}}}
\def\Xt{{\tilde{X}}}

\def\id{{{\rm id}}}
\def\diag{{\rm diag}}
\def\gcd{{{\rm gcd}}}
\def\lcm{{{\rm lcm}}}
\newcommand{\Img}{\operatorname{Im}}
\def\Hom{{\rm Hom}}
\def\Inn{{\rm Inn}}
\def\Aut{{\rm Aut}}
\def\Lie{{\rm Lie\,}}
\def\coker{{\rm coker\,}}
\def\tors{{\rm tors}}
\def\Ext{{\rm Ext}}
\def\Stab{{\rm Stab}}
\def\Ad{{\rm Ad}}
\def\ad{{\rm ad}}
\def\ab{{\rm ab}}

\def\AA{{\mathbf{A}}}
\def\BB{{\mathbf{B}}}
\def\CC{{\mathbf{C}}}
\def\DD{{\mathbf{D}}}
\def\EE{{\mathbf{E}}}
\def\FF{{\mathbf{F}}}
\def\GG{{\mathbf{G}}}
\def\XX{{\mathbf{X}}}
\def\YY{{\mathbf{Y}}}
\def\ZZ{{\mathbf{Z}}}
\def\VV{{\mathbf{V}}}
\def\WW{{\mathbf{W}}}
\newcommand{\BST}{\boldsymbol{\Theta}}
\newcommand{\TD}{\mathbf{A}_{n-1}^{(m,n-1)}}


\def\into{\hookrightarrow}
\newcommand{\isoto}{\overset{\sim}{\to}}

\newcommand{\Orbset}{\operatorname{Cl}}         
\def\Or{{\rm Cl}}                                
\newcommand{\Orbs}[1]{ \# \mathrm{Cl}( {#1} ) } 
\newcommand{\Wsim}[1]{ \underset{{#1}}{\sim}  }   

\def\Cl{{\rm Cl}}                                
\newcommand{\Cls}[1]{ \# \mathrm{Cl}( {#1} ) } 

\def\a{{\boldsymbol{a}}}
\def\aa{{\boldsymbol{a}}}
\def\bb{{\boldsymbol{b}}}
\def\cc{{\boldsymbol{c}}}
\def\dd{{\boldsymbol{d}}}
\def\ttt{{\boldsymbol{t}}}
\def\sss{{\boldsymbol{s}}}
\def\mmu{{\boldsymbol{\mu}}}
\def\nnu{{\boldsymbol{\nu}}}

\def\vk{{\varkappa}}

\def\zbar{{\overline{z}}}
\def\gbar{{\overline{g}}}
\def\nbar{{\overline{n}}}

\newcommand{\labelto}[1]{\xrightarrow{\makebox[1.5em]{\scriptsize ${#1}$}}}

\newcommand{\redu}{\mathrm{red}}
\newcommand{\sem}{\mathrm{ss}}
\newcommand{\scon}{\mathrm{sc}}
\newcommand{\tor}{\mathrm{tor}}

\newcommand{\GL}{{\bf{GL}}}
\newcommand{\SL}{{\bf{SL}}}
\newcommand{\Sp}{{\bf{Sp}}}
\newcommand{\PSp}{{\bf{PSp}}}
\newcommand{\SO}{{{\bf SO}}}
\newcommand{\PSO}{{\bf{PSO}}}
\newcommand{\Spin}{{{\bf Spin}}}
\newcommand{\HSpin}{{\bf{HSpin}}}
\newcommand{\PGL}{{\bf{PGL}}}
\newcommand{\SU}{{\bf SU}}
\newcommand{\PSU}{{\bf PSU}}

\newcommand{\boxone}{ *+[F]{1} }
\def\ccc{{  \lower0.20ex\hbox{{\text{\Large$\circ$}}}}}
\def\Lbul{{\lower0.20ex\hbox{\text{\Large$\bullet$}}}}
\newcommand{\bc}[1]{{\overset{#1}{\ccc}}}
\newcommand{\bcu}[1]{{\underset{#1}{\ccc}}}
\newcommand{\bcb}[1]{{\overset{#1}{\Lbul}}}
\newcommand{\bcbu}[1]{{\underset{#1}{\Lbul}}}
\newcommand{\sxymatrix}[1]{ \xymatrix@1@R=5pt@C=9pt{#1} }
\newcommand{\mxymatrix}[1]{ \xymatrix@1@R=0pt@C=9pt{#1} }
\newcommand{\rline}{ \ar@{-}[r] }
\newcommand{\lline}{ \ar@{-}[l] }
\newcommand{\dline}{ \ar@{-}[d] }
\newcommand{\upline}{ \ar@{-}[u] }
\def\arr{\ar@{=>}[r]}
\def\all{\ar@{=>}[l]}

\def\RRR{{\Rightarrow}}
\def\gr{\! > \!}
\def\less{ \!\! < \!\! }
\def\LLL{\!\Leftarrow\!}
\def\boe{{\sxymatrix{\boxone}}}
\def\ll{\!-\!}

\def\ds{\displaystyle}

\newcommand\two[2]{\underset{\ds#2}{#1}}
\newcommand\three[3]{\underset{\ds#3}{\two{#1}{#2}}}
\newcommand\four[4]{\underset{\ds#4}{\three{#1}{#2}{#3}}}
\newcommand\five[5]{\underset{\ds#5}{\four{#1}{#2}{#3}{#4}}}


\def\gSU{{\bf SU}}
\def\AA{{\bf A}}

\def\nbar{{\bar{n}}}

\def\bb{{\boldsymbol{b}}}

\def\Htil{{\widetilde{H}}}
\def\Ttil{{\widetilde{T}}}
\def\im{{\rm im\,}}

\def\Gtil{{\widetilde{G}}}
\def\ytil{{\tilde t}}
\def\ttil{{\tilde t}}
\def\ii{{\mathbf{i}}}

\def\emm{\bfseries}

\def\Gal{{\rm Gal}}
\def\H{{\mathbb{H}}}

\def\ppi{{\boldsymbol{\pi}}}

\def\jj{{\boldsymbol{j}}}
\def\ii{{\boldsymbol{i}}}

\def\OO{{\mathbf{O}}}
\def\ve{\varepsilon}

\def\sV{{\mathcal{V}}}
\def\xbar{{\bar{x}}}
\def\U{{\bf U}}
\def\sH{{\mathcal{H}}}

\def\vev{\varepsilon^\vee}
\def\onto{\twoheadrightarrow}

\def\HR{{\bf HR}}
\def\HC{{\bf HC}}
\def\RF{{\bf RF}}
\def\HH{{\bf HH}}

\def\gg{{\mathfrak{g}}}

\def\T{{\mathcal{T}}}
\newcommand{\X}{{{\sf X}}}
\def\G{{\mathbb{G}}}

\def\half{{\tfrac{1}{2}}}

\def\Ss{\sideset{}{'}\sum_{k\succeq i}}
\def\Sst{\Ss}
\def\Ssd{\sideset{}{'}\sum_{k\succeq i,\,k\in D^\tau}}

\newcommand{\Reps}{\textbf{Representatives of classes }}
\newcommand{\reps}{\textbf{representatives of classes }}
\newcommand{\creps}{\textbf{canomical representatives of classes }}
\newcommand{\minreps}{\textbf{minimal representatives of classes }}
\newcommand{\Zerois}{\textbf{The class of zero} consists of }
\newcommand{\zerois}{\textbf{the class of zero} consists of }

\newcommand{\boBE}{ This subsection is based on our calculations in the paper \cite{BE-real} by Borovoi and me.}
\newcommand{\expBE}{ This section is an expanded version of the calculations we did in \cite{BE-real}. }
\newcommand{\ourpaper}{{\em This subsection is based on our paper, \cite{BE-real}.}}

\def\hs{\kern 0.8pt}
\def\hh{\kern 1.0pt}
\def\kk{\kern 2.0pt}



\date{\today}

\title{Solutions of Reeder's Puzzle}

\author{Izhak (Zachi) Evenor}

\begin{abstract}
In this paper we consider the generalized Reeder's puzzle, introduced by Reeder in 2005 and generalized by Borovoi and Evenor in 2016. We give a detailed solution of the puzzle for the graphs of Dynkin diagrams and affine Dynkin diagrams. We find the number of equivalence classes in each case. We also discuss more general graphs, and prove the main theorem about graphs (simply-laced trees) that contain $\EE_6$ as a subgraph.
\end{abstract}

\address{Evenor: Raymond and Beverly Sackler School of Mathematical Sciences,
Tel Aviv University, 6997801 Tel Aviv, Israel}
\email{zachi.evenor@gmail.com}

\keywords{Reeder's puzzle, Reeder's game, labelings of a Dynkin diagram,  Dynkin diagrams, affine Dynkin diagrams, real Galois cohomology, combinatorics}

\maketitle




\setcounter{section}{-1}

\section{Introduction}

Our paper begin with the following game, first introduced by Reeder in 2005\cite{Reeder}, and generalized in 2014/2015 by \cite{BE-real}. The game, which you can play at the beach, is as follows: draw on the sand a connected graph with $n$ vertices (draw each vertex as an empty circle). On every vertex we can put one seashell, and thus for each vertex there can at two states: it is either empty or shelled. There can be $2^n$ configurations of seashells on the graph. Now we define the allowed moves in the game: choose any vertex, if it has an even number of neighbors with shells on them -- do nothing, if it has an odd number of neighbors with shells on them -- change the vertex's status (if it is empty - put a shell, if it has a shell - remove it). The goal of the game is to take any configuration and transform it to a configuration with minimal number of shells using only the allowed moves (which we call simply {\em moves}). A solution of the game is the number of equivalence classes (the moves induce an equivalence relation, see below) and for each class to find a representative labeling with minimal number of shells (see Definition \ref{def:goal}). We call this game {\em Reeder's Game} or {\em Reeder's Puzzle}.

It is easy to translate this game to a mathematical language: let $D$ be a connected graph with $n$ vertices, we say that two vertices $i,j$ of a diagram $D$  are neighbors if they are connected by an edge. Each vertex can be labeled with two values: 0 or 1. In other words, a vertex $i$ has the value $a_i \in \{0,1\} = \mathbb{Z}/2\mathbb{Z}$. A configuration is a vector of labels $\aa = (a_1,...,a_n) = (a_i)_{i=1}^n \in (\mathbb{Z}/2\mathbb{Z})^n$, which we call a {\em labeling}. The allowed moves can also be formulated as mathematical transformations $\T_i : (\mathbb{Z}/2\mathbb{Z})^n \to (\mathbb{Z}/2\mathbb{Z})^n$, $\T_i(\aa)=\aa'$. See Section \ref{sec:rules}. In section \ref{sec:basics} we list some basic observations that will help us solve the puzzle.

\bigskip

The following motivating cases use the solution of Reeder's puzzle. One can skip this part without harming the understanding of the rest of this text.

The original Reeder's puzzle arose in \cite{Reeder} to study orthogonal linear maps on a vector space $V$ over  $\mathbb{F}_2$ that preserve the quadratic
\begin{equation}
q(u) = \sum_i u_i^2 + \sum_{i \ll j} u_i u_j = \# \mbox{ of 1's} \ + \ \# \mbox{ edges between 1's} \ .
\end{equation}

In particular, Reeder studies the diagram to determine when the map
\begin{equation*}
\rho : W \to \mathrm{O}(V,q)
\end{equation*}
between the Coxeter group $W$ generated by the moves on the diagram and the orthogonal subgroup $\mathrm{O}(V,q) \subset \mathrm{GL}_n(\mathbb{F}_2)$ of linear maps preserving $q$, is surjective.

The generalized Reeder's puzzle was used in \cite{BE-real} to compute the Galois cohomology (see \cite{Serre} and \cite{Be}) of simply-connected reductive linear algebraic groups $H^1(\mathbb{R},G)$, and with appropriate modification also handle non-simply-laced Dynkin diagrams, based on \cite{Bo} and \cite{Borovoi-arXiv}. Moreover, one can use the method of \cite{BE-real} to compute the Galois cohomology for (non-compact) inner and outer forms of compact simply-connected linear algebraic groups by considering a twisted action of the Weyl group, which modify a little the rules of the game. These applications are both handled with details in my paper with Borovoi.\cite{BE-real} We used Reeder's puzzle to calculate the cardinalities of the Galois cohomology sets (computed in another method by Adams, see \cite{A}) and also developed a method to find the number of connected components of the algebraic variety $(G/F)(\mathbb{R})$ where $F \subset G$ are simply-connected linear algebraic groups, using the cohomology sets and their descriptions by Reeder's puzzle.

Kac-Moody groups are infinite-dimensional groups with Kac-Moody algebras as their tangent space. An affine Dynkin diagram encodes a generalized Cartan matrix which characterizes a Kac-Moody algebra through root data and thus a Kac-Moody group.\cite{KP}\cite{KW}\cite[mainly sections 1 and 5]{Tits} Thus the Reeder's puzzle solutions for affine Dynkin diagrams are related to the conjugacy classes of elements of order dividing 2 of those infinite-dimensional groups.


\smallskip

Reeder's puzzle is similar but not identical to the lit-only $\sigma$-game. Both puzzles are based on a two-states configurations on connected graphs but the allowed moves are different in each game. The allowed moves in the lit-only $\sigma$-game are to choose a vertex with label 1, (i.e. vertex $i$ such that $a_i=1$) and then flip the states ($0 \leftrightarrow 1$) of all of its neighbors. \cite{Huang-Weng-flipping-puzzle-2010}\cite{Huang-lit-only-s-game-2015} The lit-only $\sigma$-game was solved for Vogan diagrams, Dynkin diagrams and extended Dynkin diagrams by Meng-Kiat Chuah and Chu-Chin Hu.\cite{Chuah-Hu-2004}\cite{Chuah-Hu-2006}

It turns out that Reeder's puzzle and the lit-only $\sigma$-game are related to each other: they are dual in the sense of \cite{Huang-lit-only-and-dual-2009}. To every lit-only $\sigma$-game move there is a Reeder's puzzle move, and when the moves are expressed as $n \times n$ matrices operating on $V = \mathbb{F}_2^n = (\mathbb{Z}/2\mathbb{Z})^n$ the correspondence rule is $\T_i = S_i^t$ where $\T_i$ is the Reeder's puzzle move on vertex $i$ expressed as matrix and $S_i$ is the lit-only $\sigma$-game move expressed as matrix. Let $A$ be the adjacency matrix of the graph $D$: $a_{ij}=1$ if the vertices $i$ and $j$ are neighbors and 0 otherwise. Then by \cite[Theorem 2.1]{Huang-lit-only-and-dual-2009} for every $1 \le i \le n$, $A\T_i = A S_i^t = S_i A$. When $\det A = 1$ this gives a one-to-one correspondence between Reeder equivalence classes and lit-only $\sigma$-game orbits:
\begin{eqnarray*}
\Cl(D) & \longrightarrow & \mathrm{Or}(D) \\
 \rho & \longmapsto & A\rho
\end{eqnarray*}
where $\Cl(D)$ is the set of Reeder classes and $\mathrm{Or}(D)$ is the set of lit-only $\sigma$-game orbits.\cite[Corollary 2.2]{Huang-lit-only-and-dual-2009} Therefor by obtaining results about one game can obtain results about the other and in particular the results presented in our paper can help solving lit-only $\sigma$-games.

\bigskip

The rest of the paper is structured as follows: In Section \ref{sec:rules} we describe mathematically the rules of the game for simply-laced and non-simply-laced graphs, and also discuss several basic observations which are essential for solving the game. In section \ref{sec:Calculations} we solve it for connected Dynkin and affine Dynkin diagrams (taken from \cite[Table 1 and Table 6]{OV}). In \cite{BE-real} we solved it for compact and ``twisted'' Dynkin diagrams. We give detailed derivations of the results for  $\AA_n$, $\BB_n$, $\CC_n$, $\DD_n$. The results for $\EE_6$, $\EE_7$, $\EE_8$, $\FF_4$ and $\GG_2$ can be found in \cite{BE-real}.
The results for $\EE_6$, $\EE_7$ and $\EE_8$ were obtained and proved independently by Reeder.\cite{Reeder}
For $\AA_n$ we also add important definitions, lemmas and propositions (which appear in \cite{BE-real}) that are vital for understanding the rest of the calculations. In Section \ref{sec:general-graphs} we discuss general results for more general graphs.


\section{Rules of the Game}
\label{sec:rules}

\subsection{Simply-laced diagrams}

Let $D$ be a connected simply-laced graph with $n$ vertices. We say that two vertices $i,j$ of a diagram $D$  are neighbors if they are connected by an edge.

\begin{definition}
To each vertex we attach a label $a_i \in \{0,1\} = \mathbb{Z}/2\mathbb{Z}$. A vector of labels $\aa = (a_i)_{i=1,...n} \in (\mathbb{Z}/2\mathbb{Z})^n$ is called a {\em labeling} of the diagram.
\end{definition}

\begin{definition}[Moves] \label{def:elem-moves}
A {\em move} $\T_i$ is a map sending a labaling $\aa$ to another labeling $\aa'$ defined as follows
\begin{equation} \label{eq:moves-simple}
\aa' = \T_i(\aa) \quad \mbox{ where } \begin{cases} a'_j = a_j & \mbox{ if } j \ne i \ , \\ a'_i = a_i + \sum_k a_k & \mbox{ if } j=i \ , \end{cases}
\end{equation}
where $k$ runs on the neighbors of the vertex $i$, and the summation is done modulo 2. Note that the move $\T_i$ can change only the vertex $i$.
\end{definition}

\begin{remark} \label{rem:elem-move}
Regarding Definition \ref{def:elem-moves}, the move $\T_i$ can be described as follows: if $a_i$ has an odd number of neighbors with 1 -- change it, if $a_i$ has an even number of neighbors with 1 -- do not change it. Cf. introduction section.
\end{remark}

\begin{proposition} \label{prop:elem-move}
The moves satisfy $\T_i^2 = \id$, that is $\T_i(\T_i(\aa)) = \aa$.
\end{proposition}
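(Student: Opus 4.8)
The plan is to compute $\T_i\bigl(\T_i(\aa)\bigr)$ directly from the defining formula \eqref{eq:moves-simple} and check that it returns $\aa$. Write $\aa' = \T_i(\aa)$ and $\aa'' = \T_i(\aa')$. First I would record the trivial half of the statement: since each move alters only the $i$-th coordinate, we have $a''_j = a'_j = a_j$ for every $j \neq i$, so the whole proposition reduces to verifying the single equality $a''_i = a_i$.

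The one structural point that carries the argument — and essentially the only thing requiring care — is that a vertex is never its own neighbor, i.e. $D$ has no loops, so the index $i$ does not occur in the sum $\sum_k a_k$ ranging over the neighbors $k$ of $i$. Because that sum is assembled entirely from coordinates that $\T_i$ leaves fixed, it is invariant under the move, giving $\sum_k a'_k = \sum_k a_k$.

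With this in hand the computation is immediate: by definition $a''_i = a'_i + \sum_k a'_k = \bigl(a_i + \sum_k a_k\bigr) + \sum_k a_k = a_i + 2\sum_k a_k$, and since all arithmetic is performed in $\mathbb{Z}/2\mathbb{Z}$ the term $2\sum_k a_k$ vanishes, so $a''_i = a_i$. Combining this with the observation on the other coordinates yields $\aa'' = \aa$, that is $\T_i^2 = \id$.

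I do not expect a genuine obstacle in this argument; it is a one-line cancellation in characteristic $2$. The only step worth stating explicitly is the loop-free assumption on $D$, since the invariance $\sum_k a'_k = \sum_k a_k$ — and hence the cancellation of the doubled neighbor-sum — relies precisely on $i$ not being counted among its own neighbors.
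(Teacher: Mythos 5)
Your proof is correct and is essentially the paper's argument made explicit: the paper simply cites Remark \ref{rem:elem-move} (applying $\T_i$ twice either flips the $i$-th label twice or not at all, since the neighbors are untouched), and your characteristic-$2$ cancellation $a''_i = a_i + 2\sum_k a_k = a_i$ is the same observation written out algebraically. Your explicit note that $i$ is not its own neighbor, so the neighbor sum is invariant under $\T_i$, is exactly the point the paper leaves implicit.
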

\begin{proof}
It is clear from Remark \ref{rem:elem-move}.
\end{proof}

\begin{example}
Consider the diagram $\AA_3$: $\sxymatrix{ \bc{1} \rline & \bc{2} \rline & \bc{3} }$.
Then:
\begin{eqnarray*}
1 \ll \boldsymbol{1} \ll 1 \overset{\T_2}{\longmapsto} 1 \ll \boldsymbol{1} \ll 1 \ , & & a'_2 = a_1 + a_2 + a_3 = 1 + 1 + 1 = 1 , \\
\boldsymbol{1} \ll 1 \ll 1 \overset{\T_1}{\longmapsto} \boldsymbol{0} \ll 1 \ll 1 \ ,  & & a''_1 = a_1 + a_2 = 1 + 1 = 0 .
\end{eqnarray*}
\end{example}

\begin{remark}
One can think on a labeling as a vector in $V = \mathbb{F}_2^n$ and then formulate each move as a matrix operation on these vectors. See \cite{Huang-lit-only-and-dual-2009} for details.
\end{remark}

\begin{definition}
Two labeling $\aa$ and $\aa'$ are equivalent if there is a finite sequence of moves $\T_{i_1},...,\T_{i_r}$ such that $\aa' = \T_{i_1} \circ \cdots \circ \T_{i_r}(\aa) = \T_{i_1}(...(\T_{i_r}(\aa)...)$, i.e. $\aa$ can be changed to $\aa'$ by a finite sequence of moves. It can be easily checked that this is an equivalence relation (use Proposition \ref{prop:elem-move}). In that case we say that $\aa$ and $\aa'$ lie in the same {\em equivalence class} (or just {\em class}) and denote the equivalence class of $\aa$ by $[\aa]$.
\end{definition}

\begin{notation}
Let $D$ be a connected diagram. We denote the set of labelings on the diagram by $L(D)$. We denote the set of equivalence classes by $\Cl(D) :=  L(D)/\sim$ and the number of classes by $\Cls{D}$.
\end{notation}

\begin{definition}[Reeder's Puzzle] \label{def:goal}
{\em Reeder's puzzle} or {\em Reeder's game} consists of a finite connected diagram $D$ with $n$ vertices, the set of all labeling $L(D) \simeq (\mathbb{Z}/2\mathbb{Z})^n $ with the set of moves defined in Definition \ref{def:elem-moves} and its generalization Definition \ref{def:elem-moves-nonsimple}. The goal of the game is to find the equivalence classes under the above defined relation. A solution of the game for a certain diagram $D$ consists of:
\begin{enumerate}
\item Description of the set of the equivalence classes $\Cl(D)$
\item The number of equivalence classes $\Cls{D}$.
\item For each equivalence class we give a representative with a minimal number of 1's, we call such representative a {\em minimal representative}.
\item In certain cases we give a complete description of the equivalence classes.
\end{enumerate}
\end{definition}

\begin{remark} \label{rem:connected}
One may ask why we consider only connected graphs. Suppose a graph $D$ is composed from two disjoint connected subgraphs, $D = D_1 \sqcup D_2$. Then $L(D) \cong L(D_1) \times L(D_2)$ and $\Cl(D) \cong \Cl(D_1) \times \Cl(D_2)$, so we are reduced to the case of connected graphs.
\end{remark}

\subsection{Non-simply-laced diagrams}
\label{subsec:non-simply-laced}

Now let $D$ be a connected graph with $n$ vertices, not necessarily simply-laced. Now we allow also multiple edges. We consider multiple edges that are directed (i.e. they are arrows). When we write
\begin{equation*}
\cdots - a_i \overset{s}{\Rightarrow} a_{i+1} - \cdots \quad \mbox{ for } \quad \sxymatrix{ \cdots \rline & \bc{i} \arr^{s} & \bc{i+1} \rline & \cdots }
\end{equation*}
we mean that the vertices $i$ and $i+1$ are connected by directed edge of multiplicity $s$ and we call the $i$-th (resp. $i+1$-vertex) vertex the {\em longer vertex} (resp. {\em shorter vertex}). In other words, the arrow points on the shorter vertex. When the edge is simply a double edge (i.e. $s=2$) we omit the $s=2$ mark and just write $a_i \RRR a_{i+1}$. When the edge is triple, we write $a_i \Rrightarrow a_{i+1}$ (arrow with 3 lines). We say that two vertices $i,j$ of a diagram $D$  are neighbors if they are connected by at least one edge (single or multiple). Due to Remark \ref{rem:connected} we consider only connected graphs.

\medskip

Motivated by \cite{BE-real} we modify the rules of the game:

\begin{definition}[Moves] \label{def:elem-moves-nonsimple}
For a non-simply-laced diagram we define elementary transformations or {\em moves}. A move $\T_i$ is a map sending a labaling $\aa$ to another labeling $\aa'$ defined as follows
\begin{equation} \label{eq:move-nonsimple}
\aa' = \T_i(\aa) \quad \mbox{ where } \begin{cases} a'_j = a_j & \mbox{ if } j \ne i \ , \\ a'_i = a_i + \sum_{k\ge} a_k & \mbox{ if } j=i \ , \end{cases}
\end{equation}
where $k$ runs on the neighbors of the vertex $i$ \textbf{excluding} shorter neighbors with even multiple edges, and the summation is done modulo 2. As in the previous case, the move $\T_i$ can change only the vertex $i$.
\end{definition}

Note that this definition is a generalization of Definition \ref{def:elem-moves} and \eqref{eq:move-nonsimple} agrees with \eqref{eq:moves-simple} for simply-laced diagrams.

\begin{example}
Let us give an example that clarifies \eqref{eq:move-nonsimple}. Assume we have 2 vertices that are connected by a double edge (assume for simplicity that there are no other neighbors), and the vertex $i$ is longer than the vertex $i+1$. The diagram is $a_i \RRR a_{i+1}$. Then
\[
\aa' = \T_i(\aa) : \quad a'_i = a_i , \qquad \aa' = \T_{i+1}(\aa) : \quad a'_{i+1} = a_i + a_{i+1} \ .
\]
If there are 2 vertices connected by a triple edge, and the vertex $i$ is longer than the vertex $i+1$, then the diagram is $b_i \Rrightarrow b_{i+1}$ and
\[
\bb' = \T_i(\bb) : \quad b'_i = b_i + b_{i+1} , \qquad \bb' = \T_{i+1}(\bb) : \quad b'_{i+1} = b_i + b_{i+1} \ .
\]
\end{example}

\begin{remark} \label{rem:elem-move-non-simply-laced}
If the multiplicity of the directed edge is even then the shorter vertex does not affect the longer vertex. In a more prosaic way we can say that in this case ``The longer vertex does not see the shorter vertex''. Moreover, since the only thing matters is the parity of the multiplicity of the edge, we may consider only single-edged and double-edged graphs.
\end{remark}

The goal of the game remains the same.

\subsection{Basic observations}
\label{sec:basics}


\begin{definition}
By {\em (connected) components (of 1's)} of a labeling of a Dynkin diagram we mean the connected
components of the graph obtained by removing the vertices with zeros and the corresponding edges.
\end{definition}

For example, the following labeling of $\AA_9$ has 3 connected components:
\[
\sxymatrix{ 1 \rline & 1  \rline & 0  \rline & 1  \rline & 0  \rline & 0  \rline & 1  \rline & 1 \rline & 1.}
\]
For some diagrams $D$ the number of of components of a labeling is an invariant.
For some others, the parity of the number of components is an invariant.

\begin{definition}
A {\em fixed labeling} is a labeling that is fixed under all the moves $\T_i$.
\end{definition}

For example, the following labelings of $\AA_5$,
\[
\sxymatrix{ 0 \rline & 0  \rline & 0  \rline & 0  \rline & 0} \quad \text{and}\quad   \sxymatrix{ 1 \rline & 0  \rline & 1  \rline & 0  \rline & 1} \ ,
\]
are fixed.

We say that $i$ is a vertex of degree $d$ if it has exactly $d$ neighbors. We are especially
interested in vertices of degrees 1,2 and 3.

\begin{lemma} \label{lem:invartiant-comp}
Let $D$ be a diagram. Let $i$ be a vertex of degree 1 or 2. Then the move $\T_i$ does not change the number of components.
\end{lemma}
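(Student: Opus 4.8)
The plan is to reduce everything to a single structural fact: for a vertex of degree $1$ or $2$, the move $\T_i$ alters the labeling \emph{only} when $i$ has exactly one neighbor labeled $1$. Indeed, by Remark \ref{rem:elem-move} the move flips $a_i$ precisely when the number of neighbors of $i$ carrying a $1$ is odd, and leaves $\aa$ unchanged otherwise. Since $i$ has at most two neighbors, ``odd'' can only mean ``exactly one''. Thus I may assume $\T_i$ actually changes $a_i$ and that $i$ has a unique neighbor $j$ with $a_j = 1$; in every other situation $\aa$ is fixed by $\T_i$ and there is nothing to prove.

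First I would treat the case $a_i\colon 0 \to 1$. Before the move, $i$ is not a vertex of the subgraph induced by the $1$'s; after the move it becomes one, joined by the edge $ij$ to the component $C$ containing its unique $1$-neighbor $j$. As $i$ has no other neighbor labeled $1$, it attaches to $C$ alone and cannot bridge two distinct components. Hence the move enlarges $C$ by the single leaf $i$ without creating a new component or merging two, so the number of components is unchanged.

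Next I would treat the case $a_i\colon 1 \to 0$. Here $i$ lies in some component $C$ of the $1$'s, and within the subgraph induced by the $1$'s its only neighbor is $j$, so $i$ has degree $1$ there, i.e.\ it is a leaf of $C$. Deleting a leaf from a connected graph leaves it connected, so $C \setminus \{i\}$ remains a single component (note $C$ contains at least the two vertices $i,j$, so it does not vanish), while all other components are untouched. Again the number of components does not change.

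The conceptual heart of the argument—and the only place where the degree hypothesis enters—is the parity reduction above: for degrees $1$ and $2$ the parity condition forces the flipped vertex to have precisely one $1$-neighbor, which simultaneously rules out a merge (which would require at least two $1$-neighbors while passing $0\to 1$) and a split (which would require $i$ to be a cut vertex of degree $\ge 2$ in the $1$-subgraph while passing $1\to 0$). The remaining verifications—that adding a pendant vertex or deleting a leaf preserves the component count—are routine facts about connectivity. I expect no genuine obstacle; the only point to state with care is that ``odd number of $1$-neighbors'' collapses to ``exactly one'' under the degree bound, which is exactly the feature that fails once a vertex has degree $3$.
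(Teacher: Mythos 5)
Your proof is correct and establishes the same fact the paper verifies by hand: the paper simply enumerates the local configurations at a degree-$1$ or degree-$2$ vertex, while you observe once that the parity condition forces the flipped vertex to have exactly one $1$-neighbor, so the move only attaches or removes a pendant leaf of a single component. Same underlying content, organized a bit more conceptually (and usefully isolating why degree $3$ is the threshold); no gap.
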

\begin{proof}
The proof consists of easy direct checks.

Let $a_1\ll a_2\ll\cdots$ be a diagram with $k$ components and consider the vertex 1 with label $a_1$. It has one neighbor, namely vertex 2. Then
\[
\T_1(00...) = 00... , \quad \T_1(01...) = 11... , \quad \T_1(10...) = 10... , \quad \T_1(11...) = 01...
\]
and we see that the number of components is preserved.

Let $\cdots \ll a_{i-1} \ll a_i \ll a_{i+1} \ll \cdots$ be diagram with $k$ components and consider the vertex $i$ with label $a_i$. It has two neighbors, namely vertices $i-1$ and $i+1$. Then
\[
\T_i(...0a_i0...) = (...0a_i0...), \quad \T_i(...100...)=...110... , \quad T_i(...110...) = ...100...
\]
and the interesting cases
\[
\T_i(...1a_i1...) = ...1a_i1... \quad \mbox{ since } \quad a'_i = a_i + 1 + 1 = a_i
\]
and we see in all the cases that the number of components is preserved.
\end{proof}

\begin{definition} \label{def:loop}
In this paper a {\em cycle} or a {\em circuit} is a sequence of vertices and edges $(j_0,h_0,j_1,h_1,...,h_r, j_r)$ such that $j_r = j_0$ and each edge $h_i$ appears only once in the sequence (i.e. $h_p \ne h_q$ for all $0 \le p < q \le r$). Intuitively, a circuit is a sequence of vertices connected by edges such that we can go from one of them (say vertex $j_0$) through the others (walking from vertex to other vertex is done by walking through the connecting edge between them) and end in the vertex in which we started from without going on the same edge twice.
For example:
\begin{equation*}
\sxymatrix{ & \bc{0} & \\ \bc{1} \rline \ar@{-}[ru] & \cdots \rline & \ar@{-}[lu] \bc{n} }
\end{equation*}
is a cycle: $(0 \rightarrow 1  \rightarrow ...  \rightarrow n  \rightarrow 0)$.
A diagram is called {\em acyclic} or {\em circuitless} if it has no circuits. If $D$ is acyclic and connected then $D$ is a {\em tree}.
\end{definition}

The Dynkin diagrams $\DD_n$ $(n\ge4)$, $\EE_6$, $\EE_7$ and $\EE_8$ have vertices of degree 3. Now let $D$ be a tree with a vertex $i$ of degree 3, and let $\a$ be a labeling of $D$ that looks as
\begin{eqnarray} \label{eq:unsplit-scheme}
\sxymatrix{\dots \rline  &1 \rline & 1 \rline \dline & 1\rline &\dots \\& & 1 & }
\end{eqnarray}
The elementary transformation $\T_i$ splits the corresponding component  to three:
\begin{eqnarray}  \label{eq:split-scheme}
\sxymatrix{\dots \rline  &1 \rline & 0 \rline \dline & 1\rline &\dots \\& & 1 & }
\end{eqnarray}
and therefore changes the number of components by 2. We call this process {\em splitting at $i$}.
The reverse process is called {\em unsplitting}.

\begin{lemma} \label{lem:vertex3}
Assume we have a graph with vertices of degrees 1,2 and 3 only and that we have no cycles. Then the moves preserve the parity of the number of components.
\end{lemma}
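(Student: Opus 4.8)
The plan is to reduce the statement to the behavior of a single move at a vertex of degree $3$. Indeed, Lemma \ref{lem:invartiant-comp} already shows that a move $\T_i$ at a vertex of degree $1$ or $2$ leaves the number of components unchanged, hence a fortiori preserves its parity. Since by hypothesis every vertex has degree $1$, $2$, or $3$, it therefore suffices to prove that a move at a degree-$3$ vertex changes the number of components by an \emph{even} integer.

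First I would fix a vertex $i$ of degree $3$ with neighbors $k_1,k_2,k_3$ and dispose of the trivial case: the move $\T_i$ acts nontrivially only when the relevant neighbor-sum is odd; when it is even, $\T_i$ fixes $\aa$ and there is nothing to check. So I may assume the number $m$ of $1$-labeled neighbors of $i$ is odd, i.e. $m\in\{1,3\}$, and that $\T_i$ flips $a_i$. The key structural input is acyclicity: because the diagram is connected and circuitless (Remark \ref{rem:connected}, Definition \ref{def:loop}), it is a tree, and deleting $i$ disconnects it into exactly three subtrees $P_1,P_2,P_3$ with $k_j\in P_j$.

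Next I would count components in the two states of $a_i$. Writing $c_j$ for the number of components of $1$'s contained entirely in $P_j$, the vertex $i$ is invisible when $a_i=0$, so the total is $N_0=c_1+c_2+c_3$. When $a_i=1$, the vertex $i$ fuses together precisely those components that meet a $1$-labeled neighbor $k_j$; since the three subtrees are pairwise disjoint, these are $m$ \emph{distinct} components, which merge with $i$ into a single one, giving $N_1=c_1+c_2+c_3-m+1$. Hence the move changes the number of components by $\pm(N_1-N_0)=\pm(1-m)$: for $m=1$ the change is $0$, and for $m=3$ the change is $\pm 2$. In both admissible cases $1-m$ is even, so $\T_i$ preserves the parity of the number of components, and together with Lemma \ref{lem:invartiant-comp} this completes the argument.

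I expect the main obstacle — and the only place where the hypotheses genuinely bite — to be the merging identity $N_1=c_1+c_2+c_3-m+1$. This relies on the three branches at $i$ being disjoint and on each $1$-neighbor lying in its own branch, which is exactly what the no-cycles assumption guarantees: if two branches could reconnect through a cycle, a pair of $1$-labeled neighbors might already share a component, the reduction would be smaller than $m-1$, and the parity argument would break down. The bound on the degree (at most $3$) is what keeps the case analysis to the single parity constraint $m\in\{1,3\}$; the same counting scheme would extend to higher-degree vertices, but there the admissible odd values of $m$ no longer all give an even change, which is precisely why the lemma is restricted to degrees $1,2,3$.
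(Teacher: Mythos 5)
Your proof is correct and takes essentially the same route as the paper's: reduce to a degree-$3$ vertex via Lemma \ref{lem:invartiant-comp}, use acyclicity to see that the $1$-labeled neighbors of $i$ lie in pairwise distinct components, and conclude that an active move changes the component count by $\pm(m-1)$ with $m$ odd; the paper merely spells this out as a case analysis over the four neighbor configurations, with the disjointness of the branches proved by a cycle-contradiction rather than by deleting $i$. One caveat on your closing remark: the claim that at higher-degree vertices ``the admissible odd values of $m$ no longer all give an even change'' is false --- $\pm(m-1)$ is even for \emph{every} odd $m$, and indeed the paper's Proposition \ref{prop:general-d-parity} extends exactly your counting argument to vertices of arbitrary degree, so the degree restriction in this lemma is not what makes the parity argument work.
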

\begin{proof}
For vertices of degrees 1 or 2 it is clear: if the degree of vertex $j$ is 1 or 2 then $\T_j$ does not change the number of components by Lemma \ref{lem:invartiant-comp}. We check for a vertex of degree 3, which we call $i$. If we have the cases of \eqref{eq:unsplit-scheme} or \eqref{eq:split-scheme} then the number of components is changed by $\pm 2$ as all the neighboring 1's belong to different components (after the splitting). Indeed, if two of the neighboring 1's belong to the same component (after the splitting), then there exists a sequence of vertices (excluding the vertex $i$), all with label 1, connected by edges, which we write as $a_l \to v_1 \to ... \to v_m \to a_r$ where $a_l$ and $a_r$ are the vertices containing the neighboring 1's of the vertex $i$; then the graph has the following cycle: $a_l \to v_1 \to ... \to v_m \to a_r \to i \to a_l$, a contradiction to our assumption. Thus the number of components is changed by $\pm 2$ and it is clear that the parity is preserved. If we have the case
\[ \sxymatrix{\dots \rline  &1 \rline & a_i \rline \dline & 1\rline &\dots \\& & 0 & } \]
then $\T_i$ does not change it. If we have the case
\[ \sxymatrix{\dots \rline  &1 \rline & a_i \rline \dline & 0 \rline &\dots \\& & 0 & } \]
then $(\T_i(\aa))_i = 1 - a_i$ and one can check it does not change the number of components (it just expands or shrinks the component). Finally, for
\[ \sxymatrix{\dots \rline  &0 \rline & a_i \rline \dline & 0\rline &\dots \\& & 0 & } \]
clearly $\T_i$ does not change $a_i$ and thus preserve the number and parity of number of components. Due to symmetry we have covered all the cases.
\end{proof}

\begin{remark} \label{rem:loop-no-parity}
For graphs with cycle(s) the parity of number of components is not necessarily an invariant. Consider the diagram
\begin{equation}
\sxymatrix{ & a_0 \dline & \\ & a_1 \ar@{-}[ld] \ar@{-}[rd] & \\ a_2 \rline & a_3 \rline & a_4 }
\end{equation}
and denote a labeling as $\aa = (a_0;a_1;a_2,a_3,a_4)$. This diagram has a cycle ($a_1 \rightarrow a_2 \rightarrow a_3 \rightarrow a_4 \rightarrow a_1$). Assume we have the labeling $\theta_1 = (1;1;1,1,1)$. This labeling has only one component and the vertex of $a_1$ is of degree 3. We do unsplitting $\T_1$. Then we have $\theta_2 = \T_1(\theta_1) = (1;0;1,1,1)$ which has only two components.
\end{remark}


\bigskip

In the following we will consider Dynkin diagrams\cite[Table 1]{OV} and affine Dynkin diagrams\cite[Table 6]{OV}.

\section{Calculations and results}
\label{sec:Calculations}

\subsection{Diagrams of type $\AA_n$}
\label{sec:An}

Diagrams of type $\AA_n$ are the simplest case we treat. The basic properties and behavior of the moves on the labelings of Dynkin diagrams are established in this section, and thus it is advised to read this section thoroughly. Notations such as $\xi_i^n$  are defined in this section and they, along with their derivatives, will be used in the more complicated cases.

\boBE \
It is repeated here because it is a ``must read'' to understand the rest of the text.

\bigskip

The Dynkin diagram of type $\AA_n$  is
\begin{equation*}
\sxymatrix{ \bc{1} \rline & \cdots \rline & \bc{n} }
\end{equation*}


\begin{lemma} \label{lem:basic-An}
For $\AA_n$ we have:
\begin{enumerate}
\item[(a)] An elementary transformation does not change the number of components.
\item[(b)] Every component can be shrunk to length of 1, e.g. \[ (0 \ll 1 \ll 1 \ll 1 \ll 0) \mapsto (0 \ll 0 \ll 1 \ll 0 \ll0) \ . \]
\item[(c)] Components may be pushed so the space between components is of length 1, e.g. \[ (1 \ll 0 \ll 0 \ll 1 \ll 0) \mapsto (1 \ll 0 \ll 1 \ll 0 \ll 0) \ .\]
\end{enumerate}
\end{lemma}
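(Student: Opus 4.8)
The plan is to dispatch the three parts in order, each by an explicit local manipulation, using part (a) as a bookkeeping device for the later two.

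Part (a) is immediate: in $\AA_n$ every vertex is either an endpoint or an interior vertex of the path, hence has degree $1$ or $2$, so Lemma~\ref{lem:invartiant-comp} applies verbatim and no $\T_i$ changes the number of components.

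For part (b) I would shrink each component from one end. Let a component occupy the consecutive positions $p, p+1, \dots, q$ with $q > p$ (so its length is at least $2$), bounded on the left either by a $0$ at position $p-1$ or by the end of the diagram. Applying $\T_p$ gives $a'_p = a_p + \sum_k a_k = 1 + 0 + 1 = 0$, since the left neighbour, if present, is $0$ and the right neighbour $a_{p+1} = 1$ (the length exceeds $1$); the same computation holds when $p=1$ using the single-neighbour formula $a'_1 = a_1 + a_2 = 1+1 = 0$. Thus $\T_p$ deletes the leftmost $1$ and leaves a component on $p+1, \dots, q$ of length one less, while by part (a) the total number of components is unchanged, so nothing merges or splits. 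Iterating reduces every component to length $1$.

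For part (c) I would translate components to tighten the gaps. After (b) it suffices to treat single-vertex components, but the maneuver works for a block of any length: suppose a component occupies $p, \dots, q$ and the gap to its left has length $g \ge 2$, so $a_{p-1} = a_{p-2} = 0$. First apply $\T_{p-1}$, which sets $a_{p-1} = 0 + 0 + 1 = 1$ and grows the component one step to the left; then apply $\T_{q}$ at the right end, which returns $a_q$ to $0$ (its left neighbour is now $1$ and its right neighbour is $0$, giving $1+1+0=0$). The net effect is to slide the whole component one step to the left, decreasing the left gap from $g$ to $g-1$; part (a) again guarantees no spurious merging. I would then normalize all gaps from left to right: fix the first component, slide the second leftward until its left gap is $1$, then the third, and so on.

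The steps are all routine local checks, so the genuine (if minor) obstacle is the ordering in part (c): sliding a component left widens the gap on its other side, so a naive sweep could re-open a gap already set to $1$. Processing the components strictly left to right — always sliding the \emph{next} component toward the already-fixed block on its left, and touching only that component and the zeros between it and its predecessor — resolves this cleanly, and part (a) removes any worry that the component count could drift during the procedure.
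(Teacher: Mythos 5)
Your proof is correct, and it follows exactly the route the paper intends: part (a) via Lemma \ref{lem:invartiant-comp} on degree-$1$ and degree-$2$ vertices, part (b) by deleting the end of a component with a single move, and part (c) by the grow-left/shrink-right slide, with the left-to-right normalization handling the re-opened gaps. The paper dismisses this proof as ``Easy,'' so your write-up simply supplies the local computations it omits.
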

\begin{proof} Easy. \end{proof}

\begin{definition} \label{def:xi-form}
Labelings of $\AA_n$ of the form
\begin{equation}
\xi_r^n = \left( \sxymatrix{ 1 \rline & 0 \rline & \overset{1-0}{\cdots\cdots} & \lline
\underset{r}{1} \rline & 0 \rline & \cdots } \right)
\end{equation}
which have $r$ components, defined by
\[ (\xi_r^n)_i = \begin{cases} 1 & \mbox{ for } i=1,3,\dots,2r-1 \ , \\ 0 &  \mbox{ for } i \ne
1,3,\dots,2r-1 \ ,
\end{cases}
\]
are denoted $\xi_r^n$ (or shortly $\xi_r$ when $n$ is clear from the context) and are called {\em labelings with $r$ components packed maximally to the left}.  \newline
Labelings of $\AA_n$ of the form
\[
\eta_r^n = \left( \sxymatrix{ 0 \rline &  \cdots & \lline 0 & \lline \underset{r}{1} \rline & \overset{0-1}{\cdots\cdots} &
 \lline 0 & \lline 1 } \right)
\]
which have $r$ components, defined by
\[ (\eta_r^n)_i = \begin{cases} 1 & \mbox{ for } i=n,n-2,...,n-2(r-1) \ , \\ 0 &  \mbox{ otherwise,}
\end{cases}
\]
are denoted $\eta_r^n$ (or shortly $\eta_r$ when $n$ is clear from the context) and are called {\em labelings with $r$ components packed maximally to the right}. We sometimes call $\xi_r$ and $\eta_r$ {\em the canonical representatives} as they are minimal and compact.
\end{definition}

\begin{example}
For $\AA_7$, $\xi_3^7 = \left( 1 \ll 0 \ll 1 \ll 0 \ll 1 \ll 0 \ll 0 \right)$ and $\eta_2^7 = \left( 0 \ll 0 \ll 0 \ll 0 \ll 1 \ll 0 \ll 1 \right)$.
\end{example}

As a corollary we immediately get the following fundamental theorem:
\begin{theorem} \label{th:basic-An} \label{cor:basic-An}
For a diagram of the form $\AA_n$ we have:
\begin{enumerate}
\item[(i)] The number of components is an invariant of each orbit.
\item[(ii)] Each labeling with $r$ components is equivalent to both $\xi_r^n$ and $\eta_r^n$.
\item[(iii)] Two labelings are equivalent if and only if they have the same number of components.
\end{enumerate}
\end{theorem}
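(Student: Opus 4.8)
The plan is to derive all three parts from Lemma \ref{lem:basic-An}, treating (i) and (iii) as quick consequences and concentrating the work in (ii). For (i), I would first note that the diagram $\AA_n$ is a path, so every vertex has degree $1$ or $2$. Hence Lemma \ref{lem:invartiant-comp} (equivalently Lemma \ref{lem:basic-An}(a)) applies to every generating move $\T_i$: each such move preserves the number of components. Since the equivalence relation $\sim$ is generated by finite compositions of the $\T_i$, and each factor preserves the component count, the number of components is constant along every orbit, which is exactly (i).

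For (ii), I would reduce an arbitrary labeling $\aa$ with $r$ components to the canonical left-packed form $\xi_r^n$ in two stages. First, apply Lemma \ref{lem:basic-An}(b) to each of the $r$ components in turn, shrinking every component down to a single $1$; by part (i) this does not alter the number of components, so we obtain a labeling consisting of exactly $r$ isolated $1$'s separated by blocks of $0$'s. Second, apply Lemma \ref{lem:basic-An}(c) repeatedly to reduce each gap between consecutive $1$'s to length exactly $1$ and to slide the whole block against the left end of the diagram; this places the $1$'s precisely at positions $1,3,5,\dots,2r-1$ with $0$'s thereafter, which is $\xi_r^n$ by Definition \ref{def:xi-form}. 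Running the same packing to the right instead yields $\eta_r^n$. Thus every labeling with $r$ components is equivalent to both $\xi_r^n$ and $\eta_r^n$.

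For (iii), both directions are then immediate. If $\aa \sim \aa'$, then by (i) the two labelings share the same number of components. Conversely, if $\aa$ and $\aa'$ each have $r$ components, then by (ii) both are equivalent to the common canonical representative $\xi_r^n$, and since $\sim$ is symmetric and transitive we conclude $\aa \sim \xi_r^n \sim \aa'$.

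The substantive content — and the main obstacle — lies entirely in (ii), and specifically in making the informal ``shrink, then push'' procedure rigorous and correctly ordered. The point requiring care is that neither shrinking a component nor sliding components toward the boundary should accidentally merge two distinct components or create a new one; this is precisely what the invariance established in (i) rules out, so the two reductions are legitimate. A second delicate point is the boundary behaviour: one must check that the moves behind (c) can close every interior gap down to length $1$ and bring the leftmost $1$ all the way to vertex $1$ without ever being forced through a gap of length $0$ (which would fuse components). Granting Lemma \ref{lem:basic-An}, these are routine sequencing checks, so the whole argument is essentially bookkeeping layered on top of that lemma.
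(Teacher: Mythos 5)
Your argument matches the paper's proof exactly: (i) from Lemma \ref{lem:basic-An}(a) applied to each generating move, (ii) by shrinking components to single $1$'s and then packing them left (or right) via parts (b) and (c), and (iii) by combining (i) with the common canonical representative from (ii). The additional remarks on why shrinking and pushing cannot merge or create components are sensible bookkeeping but do not change the route.
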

\begin{proof}
(i) follows from Lemma \ref{lem:basic-An}(a). (ii) follows from Lemma \ref{lem:basic-An}(b+c) by applying shrinking of components and then pushing. In particular $\xi_r \sim \eta_r$ just by pushing all the components from one end to another. (iii) follows from (i) and (ii), in particular: if $\aa, \aa' \in L(D)$ have the same number of components $r$ then by (ii) they are both equivalent to $\eta_r$ and thus $\aa \sim \eta_r \sim \aa'$.
\end{proof}

We conclude that in $\AA_n$ the number of components is an invariant which fully characterizes classes.

\begin{corollary}\label{cor:An-reps}
\Zerois one labeling zero $\xi_0$. For \minreps we can take canonical representatives $\xi_0,\xi_1,\dots,\xi_r$, where $r=\lceil
n/2\rceil$. We have
\begin{equation} \label{eq:An-num-of-classes}
\Cls{\AA_n}=r+1=\lceil n/2\rceil+1 = \begin{cases} k+1 & \mbox{ if }n=2k \ , \\ k+2 &
\mbox{ if }n=2k+1 \ . \end{cases}
\end{equation}
\end{corollary}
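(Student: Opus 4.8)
The plan is to read off everything from the fundamental Theorem~\ref{th:basic-An}, whose part (iii) tells us that the equivalence classes of $\AA_n$ are in bijection with the values taken by the invariant ``number of components''. So the whole corollary reduces to two elementary combinatorial facts: determining which component counts actually occur, and checking minimality of the proposed representatives.

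First I would pin down the range of the component-count invariant. A component is a maximal run of consecutive $1$'s, and two distinct components must be separated by at least one $0$; hence a labeling of $\AA_n$ with $s$ components uses at least $s$ ones together with at least $s-1$ separating zeros, so $2s-1\le n$, which gives $s\le\lfloor (n+1)/2\rfloor=\lceil n/2\rceil=:r$ since $s$ is an integer. This maximum is attained by the alternating labeling $\xi_r$, and every intermediate value $0\le s\le r$ is realized by $\xi_s$, which has exactly $s$ components by Definition~\ref{def:xi-form}. Thus the invariant takes precisely the values $0,1,\dots,r$.

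By Theorem~\ref{th:basic-An}(iii) each of these $r+1$ values corresponds to exactly one class, so $\Cls{\AA_n}=r+1=\lceil n/2\rceil+1$; separating the cases $n=2k$ and $n=2k+1$ then yields the displayed piecewise formula. For the class of zero, I would observe that the only labeling with $0$ components is $\xi_0=(0,\dots,0)$, and since the component count is an invariant (Theorem~\ref{th:basic-An}(i)) no other labeling can be equivalent to it, so $[\xi_0]$ is a singleton. Finally, minimality of the $\xi_s$ is immediate: any labeling with $s$ components contains at least one $1$ in each component, hence at least $s$ ones, whereas $\xi_s$ has exactly $s$ ones and therefore realizes the minimum in its class.

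There is no real obstacle here: the statement is a direct corollary of Theorem~\ref{th:basic-An}. The only step that needs a moment's care is the bound $s\le\lceil n/2\rceil$ together with the observation that every smaller value is attained, both of which follow from the ``separation by a zero'' constraint; everything else is a direct quotation of parts (i) and (iii) of the fundamental theorem.
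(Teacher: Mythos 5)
Your proof is correct and follows essentially the same route as the paper, which treats the corollary as an immediate consequence of Theorem~\ref{th:basic-An} together with the observation (made in the remark following the corollary) that $r=\lceil n/2\rceil$ is the maximal number of components that fit on a line of $n$ vertices. Your explicit justification of the bound $2s-1\le n$ and of the minimality of the $\xi_s$ simply spells out details the paper leaves implicit.
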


\begin{remark}
The size
\begin{equation} \label{eq:max-num-of-comps-in-line}
r = \left\lceil \frac{n}{2} \right\rceil =  \begin{cases} k & \mbox{ if }n=2k \ , \\ k+1 &
\mbox{ if }n=2k+1 \ . \end{cases}
\end{equation}
is the maximal number of components we can put in a straight line diagram of $n$ vertices (e.g. $\AA_n$). This size will occur also in more complicated cases.
\end{remark}

\subsection{Diagrams of type $\tilde{\AA}_n$}
\label{sec:tilde-An}

The affine Dynkin diagram of $\tilde{\AA}_n$, denoted $\AA_n^{(1)}$ in \cite[Table 6]{OV}, is the following diagram with $n+1$ vertices ($n\ge2$) numbered $\{0,1,...,n\}$.
\begin{equation*}
\sxymatrix{ & \bc{0} & \\ \bc{1} \rline \ar@{-}[ru] & \cdots \rline & \ar@{-}[lu] \bc{n} }
\end{equation*}
We write a labeling $\aa = (a_i)_{i=0}^n$ as $(a_0;a_1,...,a_n)$.

This diagram is not acyclic (in fact, it is a cycle) but as each vertex is of degree 2 it is simple enough to be treated directly based on section \ref{sec:basics}.

\begin{lemma} \label{lem:tilde-An-fixed-points1}
The labelings $\ell_0 = (0;0,...,0)$  and $\ell_1 = (1;1,...,1)$ are fixed labelings.
\end{lemma}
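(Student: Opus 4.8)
The plan is to verify directly from the definition of a move that each of the two labelings is unchanged by every $\T_i$. Since $\tilde{\AA}_n$ is simply-laced, the relevant formula is \eqref{eq:moves-simple} of Definition \ref{def:elem-moves}, namely $(\T_i(\aa))_i = a_i + \sum_k a_k$, where $k$ ranges over the neighbors of the vertex $i$ and the sum is taken modulo $2$, while every coordinate $j \ne i$ is left fixed. The key structural fact I would exploit is that $\tilde{\AA}_n$ is a single cycle, so every vertex $i \in \{0,1,\dots,n\}$ has degree exactly $2$; hence for each $i$ the neighbor-sum $\sum_k a_k$ is a sum of precisely two labels.

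First I would treat $\ell_0 = (0;0,\dots,0)$. Here every label equals $0$, so for each $i$ we get $(\T_i(\ell_0))_i = 0 + \sum_k 0 = 0$, and since all other coordinates are unchanged by definition, $\T_i(\ell_0) = \ell_0$ for every $i$. Next I would treat $\ell_1 = (1;1,\dots,1)$. Fixing a vertex $i$, its two neighbors both carry the label $1$, so $\sum_k a_k = 1 + 1 = 0 \pmod 2$, whence $(\T_i(\ell_1))_i = 1 + 0 = 1$; again the labeling is left unchanged. As $i$ was arbitrary, $\ell_1$ is fixed as well.

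There is no genuine obstacle here: the single point deserving attention is the observation that degree $2$ forces the neighbor-sum for $\ell_1$ to vanish modulo $2$, which is exactly the parity cancellation responsible for the all-ones labeling being fixed. I would close with the remark that this computation is special to graphs that are regular of even degree, and in fact the same argument shows more generally that the all-ones labeling is fixed on any simply-laced graph in which every vertex has even degree.
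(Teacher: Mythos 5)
Your proof is correct and follows essentially the same route as the paper: a direct verification from the move formula, with the all-ones case resting on the fact that each vertex of the cycle has exactly two neighbors, so the neighbor-sum vanishes modulo $2$ (the paper writes the same computation as $a_i + a_{i-1} + a_{i+1} = 1+1+1 = 1$). Your closing generalization to graphs that are regular of even degree is a pleasant extra observation but does not change the argument.
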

\begin{proof}
For $(0;0,...,0)$ it is clear. For $(1;1,...,1)$ consider $\T_i$ for any $i$. Then if $\aa' = \T_i(\aa)$ then clearly $a'_j = a_j$ and
\[ a'_i = a_i + a_{i-1} + a_{i+1} = 1 + 1 + 1 = 1 = a_i \]
and thus $\aa' = \aa$.
\end{proof}

\begin{lemma} \label{lem:tilde-An-fixed-points2}
If $n=2k+1$ is odd then the labelings $(0;\xi_{k+1}) = (0;1,0,\overset{10}{...},1)$  and $\ell_2 = (1;0,1,\overset{01}{...},0)$ are fixed labelings.
\end{lemma}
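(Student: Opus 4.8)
The plan is to reduce the claim to a purely local condition at each vertex, then verify it, paying special attention to the three vertices ($0$, $1$, and $n$) where the cyclic wrap-around edges occur --- this is exactly where the hypothesis that $n$ is odd enters. First I would record the criterion for a fixed labeling. Since each move $\T_i$ alters only the $i$-th coordinate, via $(\T_i\aa)_i = a_i + \sum_k a_k$ with $k$ running over the neighbors of $i$, a labeling $\aa$ is fixed if and only if $\sum_k a_k \equiv 0 \pmod 2$ at every vertex, i.e.\ every vertex has an even number of neighbors labeled $1$. In $\tilde{\AA}_n$ each vertex $i$ has exactly two neighbors, $i-1$ and $i+1$ read modulo $n+1$ (so the neighbors of $0$ are $1$ and $n$), whence the condition becomes $a_{i-1}+a_{i+1}=0$, that is $a_{i-1}=a_{i+1}$ in $\Z/2\Z$, for all $i$.

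Next I would interpret this geometrically: it says the labeling is constant on each of the two color classes of the cycle, the even-indexed and the odd-indexed positions. Because $n=2k+1$, the cycle has $n+1=2k+2$ vertices, an even number, so these two parity classes are well-defined and form a bipartition respected by every edge; in particular the wrap-around edge $n-0$ joins an odd position to an even one, consistently with the interior edges. (Were $n$ even the cycle would be odd, the bipartition would break at the wrap-around, and no such alternating labeling could be fixed, so the oddness of $n$ is genuinely used.)

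Finally I would check that each displayed labeling is constant on each parity class, hence fixed. For $(0;\xi_{k+1})$ the odd positions $1,3,\dots,n$ carry $1$ and the even positions $0,2,\dots,n-1$ carry $0$; for $\ell_2$ this is reversed. Interior vertices are immediate, since $i-1$ and $i+1$ share a parity. The only checks requiring care are the boundary vertices: for $(0;\xi_{k+1})$ vertex $0$ sees $a_1+a_n=1+1=0$ (using $n$ odd), vertex $1$ sees $a_0+a_2=0+0$, and vertex $n$ sees $a_{n-1}+a_0=0+0$ (using $n-1$ even); for $\ell_2$ the corresponding sums are $0+0$, $1+1$, $1+1$. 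The main obstacle, such as it is, lies entirely in these wrap-around vertices, where the equalities $a_n=a_1$ and $a_{n-1}=a_0$ rest on the parity of $n$; the rest is a routine parity count analogous to the proof of Lemma~\ref{lem:tilde-An-fixed-points1}.
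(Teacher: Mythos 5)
Your proof is correct and follows essentially the same route as the paper, which simply observes that every vertex has an even number of neighbors labeled $1$ and invokes Remark~\ref{rem:elem-move}. You have merely spelled out the vertex-by-vertex verification (including the wrap-around edges, where the parity of $n$ enters) that the paper leaves implicit.
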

\begin{proof}
Note that each vertex has an even number of neighbors with labels 1, and by Remark \ref{rem:elem-move} the lemma follows.
\end{proof}

\begin{lemma} \label{lem:tilde-An-components}
The moves do not change the number of components. In particular, excluding the fixed labelings, the number of components completely characterizes an orbit.
\end{lemma}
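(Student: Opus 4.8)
The plan is to establish the two assertions in turn: the invariance of the number of components follows from the general machinery of Section~\ref{sec:basics}, while the characterization of orbits is obtained by cutting the cycle open and reducing to the already-solved case $\AA_n$.

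For the invariance, every vertex of $\tilde{\AA}_n$ has exactly two neighbours, so Lemma~\ref{lem:invartiant-comp} applies to each move $\T_i$ and shows that $\T_i$ preserves the number of components. The only feature of the cycle absent from the path case is the labeling $\ell_1=(1;1,\dots,1)$, whose single component runs all the way around; but there every move acts by $a_i'=a_i+1+1=a_i$, so $\ell_1$ is fixed (Lemma~\ref{lem:tilde-An-fixed-points1}) and causes no trouble. Hence the number of components is an invariant of every orbit.

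For the second assertion I would show that any two non-fixed labelings with the same number $r$ of components are equivalent. First $r\ge 1$, since $r=0$ forces the fixed labeling $\ell_0$. A non-fixed labeling is never the all-ones labeling $\ell_1$, so it has at least one vertex $j$ with $a_j=0$, and then its $r$ components are genuine arcs of $1$'s separated by non-empty gaps of $0$'s (no arc can wrap around, as $a_j=0$ breaks the cycle). The key step is to \emph{cut the cycle open at $j$}: using only the moves $\T_i$ with $i\ne j$, the label $a_j$ never changes (each such move alters only vertex $i$), so $a_j$ stays $0$ throughout, and on the remaining path $P=(j+1,j+2,\dots,j-1)$ of $n$ vertices these moves act exactly as the $\AA_n$-moves. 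Indeed at the end vertex $j+1$ one has $a'_{j+1}=a_{j+1}+a_j+a_{j+2}=a_{j+1}+a_{j+2}$ because $a_j=0$, matching the $\AA_n$ rule, and similarly at $j-1$. Since deleting a $0$-vertex neither merges nor splits any component of $1$'s, the restriction to $P$ again has $r$ components, so by Theorem~\ref{th:basic-An} it can be moved within $P$ to the canonical $\AA_n$-representative $\xi_r$. Thus $\aa$ is equivalent to the labeling with $a_j=0$ and $r$ isolated $1$'s packed against the end $j+1$.

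Finally I would identify these packed configurations across different inputs. Once each arc has been shrunk, only $r$ isolated $1$'s remain, and a lone $1$ at a vertex $i$ can be slid to $i+1$ by applying $\T_{i+1}$ and then $\T_i$; hence any configuration of $r$ isolated $1$'s can be rotated into one fixed standard configuration $C_r$ (say $1$'s at $1,3,\dots,2r-1$), provided there is slack to move, i.e.\ provided the $r$ gaps of $0$'s are not all of length $1$. The delicate point, and the one I would treat most carefully, is exactly this rigid case: all gaps have length $1$ precisely when $n+1=2r$, which can occur only for $n$ odd at the maximal value $r=k+1$, where the configuration is a checkerboard and hence fixed (Lemma~\ref{lem:tilde-An-fixed-points2}). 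Since we have excluded the fixed labelings, the reduction above is never obstructed for a non-fixed input, every non-fixed labeling with $r$ components reduces to $C_r$, and any two such labelings are therefore equivalent. The main obstacle is thus verifying that the sole obstruction to the sliding step is the alternating pattern and that this pattern coincides with the fixed labelings, so that $r$ characterizes the orbit on the complement of the fixed points.
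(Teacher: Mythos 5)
Your proof is correct, and for the first assertion it coincides with the paper's: every vertex of $\tilde{\AA}_n$ has degree $2$, so Lemma~\ref{lem:invartiant-comp} gives the invariance of the number of components. Where you genuinely diverge is on the second assertion. The paper's proof of this lemma is literally the one degree-$2$ sentence; the claim that the component count characterizes non-fixed orbits is only substantiated later, inside the proof of Proposition~\ref{prop:tilde-An}, via a sublemma showing that every non-fixed labeling can be moved to one with $a_0=0$ (by a case analysis on $a_1$ and $a_n$, identifying $\ell_1$ and $\ell_2$ as the only obstructions) and then invoking Theorem~\ref{th:basic-An}. You instead cut the cycle at an arbitrary existing zero vertex $j$, observe that the moves $\T_i$, $i\neq j$, act as the $\AA_n$-moves on the complementary path, reduce to the packed representative there, and then rotate the packed block to a fixed standard position, checking that the only rigid configuration is the alternating (hence fixed) one. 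The two arguments are close in spirit --- both reduce to Theorem~\ref{th:basic-An} after clearing a vertex --- but yours avoids privileging vertex $0$ at the cost of the extra rotation step, while the paper's avoids rotation at the cost of the sublemma's case analysis; yours also has the merit of being a self-contained proof of the lemma as stated rather than a forward reference. Your identification of the rigid case ($n+1=2r$, the checkerboard) and the verification that the big gap of a non-fixed packed configuration has length at least $2$ are exactly the points that needed care, and you handle them correctly.
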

\begin{proof}
As each vertex has degree 2 it follows from Lemma \ref{lem:invartiant-comp}.
\end{proof}

\begin{proposition} \label{prop:tilde-An}
The number of classes is
\[
\Cls{\tilde{\AA}_n} = \left\lceil \frac{n}{2} \right\rceil + 2 + (n \bmod 2) =
 \begin{cases} k+2 & \mbox{ if } n=2k \ , \\ k+4 & \mbox{ if } n=2k+1 \ . \end{cases}
\]
\end{proposition}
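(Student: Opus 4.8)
The plan is to count the orbits as (the number of component-counts realized by non-fixed labelings) plus (the number of fixed labelings). This is justified because, by Lemma~\ref{lem:tilde-An-components}, the number of components is an invariant that completely separates the non-fixed orbits, while every fixed labeling is automatically its own singleton orbit: since each $\T_i$ is an involution, it cannot carry a non-fixed labeling to a fixed one, so no fixed labeling lies in a larger orbit. Writing $N=n+1$ for the number of vertices, a labeling with $c$ components needs at least $c$ ones and $c$ separating zeros on the cycle, so $0\le c\le\lfloor N/2\rfloor$.

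First I would classify the fixed labelings exactly. Since $(\T_i\aa)_i=a_i+a_{i-1}+a_{i+1}$, a labeling is fixed iff $a_{i-1}=a_{i+1}$ for all $i$ (indices mod $N$), i.e. $a_0=a_2=\cdots$ and $a_1=a_3=\cdots$. When $N$ is odd ($n=2k$) stepping by two visits every vertex, forcing $\aa$ to be constant, so the only fixed labelings are $\ell_0,\ell_1$ of Lemma~\ref{lem:tilde-An-fixed-points1} (two of them). When $N$ is even ($n=2k+1$) the even and odd vertices decouple into two free bits, giving four fixed labelings: $\ell_0,\ell_1$ together with the two alternating ones $\ell_2$ and $(0;\xi_{k+1})$ of Lemma~\ref{lem:tilde-An-fixed-points2}.

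Next I would pin down which component-counts are realized by some non-fixed labeling. The value $c=0$ is realized only by $\ell_0$, hence never by a non-fixed labeling. For $1\le c\le k$ one can exhibit a non-fixed labeling with $c$ components (e.g. $c$ isolated ones, with one block lengthened to two if a vertex is left over), and none of these is fixed, so each such $c$ contributes one non-fixed orbit. The decisive point is the top value $c=\lfloor N/2\rfloor$: attaining it forces every run of ones and every run of zeros to have length exactly one, i.e. perfect alternation. When $N$ is odd this maximum is $c=k$, already covered and realized by non-fixed labelings; when $N$ is even the maximum $c=k+1$ is realized \emph{only} by the two alternating patterns, both of which are fixed. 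Hence in both parities the non-fixed labelings realize exactly the counts $c=1,\dots,k$, a total of $k$ non-fixed orbits.

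Assembling, the number of classes is (non-fixed orbits) $+$ (fixed labelings) $=k+2$ when $n=2k$ and $=k+4$ when $n=2k+1$, which is the asserted value of $\lceil n/2\rceil+2+(n\bmod 2)$. I expect the main obstacle to be precisely the top-level analysis in the even-cycle case: one must verify that maximality of the component-count forces alternation and that the resulting labelings coincide with the two already-known fixed labelings, so that the naive bijection ``one orbit per achievable component-count'' is corrected by trading one expected non-fixed orbit for two fixed orbits. The remaining verifications---the range $0\le c\le\lfloor N/2\rfloor$, realizability of each intermediate count by a non-fixed labeling, and completeness of the fixed list---are routine.
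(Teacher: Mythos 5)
Your route is genuinely different from the paper's: the paper reduces $\tilde{\AA}_n$ to $\AA_n$ via the injection $(a_1,\dots,a_n)\mapsto(0;a_1,\dots,a_n)$ and computes the image of the induced map on classes, whereas you count directly, splitting the orbits into singleton orbits of fixed labelings plus one orbit per component-count realized by a non-fixed labeling. Several pieces of your argument are correct and even sharper than what the paper writes down: the complete classification of fixed labelings via $a_{i-1}=a_{i+1}$ for all $i$ (constant when $n+1$ is odd, four labelings when $n+1$ is even), the observation that an involution cannot carry a non-fixed labeling to a fixed one, and the analysis of the top component-count forcing alternation on an even cycle.

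However, there is a genuine gap at the load-bearing step. Your count equates ``one non-fixed orbit per realized component-count,'' which requires that any two \emph{non-fixed} labelings of the cycle with the same number of components be connected by moves. You attribute this to Lemma~\ref{lem:tilde-An-components}, but that lemma's proof only establishes the invariance half (moves at degree-$2$ vertices preserve the number of components, via Lemma~\ref{lem:invartiant-comp}); the ``completely characterizes an orbit'' clause is exactly the nontrivial content of Proposition~\ref{prop:tilde-An}, and the paper discharges it inside the proposition's proof with its SubLemma: every non-fixed labeling with $a_0=1$ can be moved to one with $a_0=0$, after which Theorem~\ref{th:basic-An} applies to the tail $(a_1,\dots,a_n)$. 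The case analysis there (handling $a_1=a_n=0$, $a_1=a_n=1$, and the stuck alternating configurations) is the real work. As written, your argument yields only the lower bound $\Cls{\tilde{\AA}_n}\ge k+2$ (resp.\ $k+4$); to close it you must prove the connectivity claim, e.g.\ by showing every non-fixed labeling with $c$ components can be driven to a canonical representative such as $(0;\xi_c)$.
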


\begin{proof}[Intuitive proof]
To calculate the number of classes we need to count how many components we can insert in the diagram, and also add the number of the fixed labelings. The term $\lceil n/2 \rceil$ is the maximal number of components we can insert in $n$ vertices, provided that $a_1=1$. The new vertex 0 cannot accept new component as it is a neighbor of $a_1$. The term 2 is for the fixed labelings of Lemma \ref{lem:tilde-An-fixed-points1}. By Lemma \ref{lem:tilde-An-fixed-points2} when $n$ is odd we need to count also $\ell_2$ (the fixed labeling $(0;\xi_{k+1})$ is counted by the first term), and this is accounted by the term $(n \bmod 2)$.
\end{proof}

And now we give the formal proof:

\begin{proof}
Let us consider the map $\varphi : L(\AA_n) \to L(\tilde{\AA}_n)$ defined by
\[ \varphi(a_1,...,a_n) = (0;a_1,...,a_n) \ . \]
Consider the induced map $\varphi_* : \Cl(\AA_n) \to \Cl(\tilde{\AA}_n)$ defined by taking the quotient, that is:
\[ \varphi_*([(a_1,...,a_n)]) = [\varphi(a_1,...,a_n)] = [(0;a_1,...,a_n)] \ . \]
Then $\varphi_*$ is well-defined. If $\aa' \sim \aa$ in $\AA_n$ then clearly $\varphi(\aa') \sim \varphi(\aa)$ (use the same $\T_i$ for both $\aa$ and $\varphi(\aa)$).


The map $\varphi_*$ is also injective. Suppose $\aa' \sim \aa \in \Img \varphi$. Then we can assume $\aa, \aa' \notin \{ \ell_1, \ell_2 \}$ and that $\aa' = (0;a'_1,...,a'_n) \sim (0;a_1,...,a_n) = \aa$. By Lemma \ref{lem:tilde-An-components} $\aa = (0;a_1,...,a_n)$ and $\aa' = (0;a'_1,...,a'_n)$ have the same number of components. Then, as $\aa = \varphi(a_1,...,a_n)$ and $\aa' = \varphi(a'_1,...,a'_n)$, we have that $(a_1,...,a_n)$ and $(a'_1,...,a'_n)$ have the same number of component (because in both $a_0=a'_0=0$), and thus by Theorem \ref{th:basic-An} we get $(a_1,...,a_n) \sim (a'_1,...,a'_n)$. Thus, $[\aa] = \varphi_*([a_1,...,a_n]) = \varphi_*([(a'_1,...,a'_n)]) = [\aa']$ implies $[(a_1,...,a_n)]=[(a'_1,...,a'_n)]$.

\medskip

We now calculate the image of $\varphi_*$. And for that we need the following sublemma:

\textbf{SubLemma:} Let $\aa = (a_0; a_1,...,a_n)$ be a labeling. If $a_0 = 1$ and $\aa \ne \ell_1 , \ell_2$ then $\aa$ is equivalent to a labeling $\aa' = (a'_0;a'_1,...,a'_n)$ with $a'_0 = 0$.

{\em Proof:} We show that if a labeling is not a fixed labeling then we can change the label $a_0$ of the zeroth vertex to 0 by a sequence of moves. Assume $a_0=1$ (note that we have left-right symmetry). If we can do this, we say we can {\em win} and after we managed to change $a_0$ to $a'_0=0$ we say we {\em won}.
\begin{itemize}
\item If $a_1=1$ and $a_n=0$ just apply $\T_0$ and we won. If $a_1=0$ and $a_n=1$ we won again (symmetric cases).
\item Now assume $a_1=a_n=0$. We can change $a_0$ to 0 by moving the component of $a_0$. If $a_2=0$ we won, apply $\T_1$ and then $\T_0$. If $a_2=1$, $a_3=1$ and $a_4=0$ we won again (apply $\T_3$ and $\T_2$ and then $\T_1$ and $\T_0$). So in order not to win we need that the component cannot be moved. This happens when:
    \begin{itemize}
    \item[(i)] Either $a_i=1$ for all $i=1,...,n$ which is excluded by the assumption that $\aa \ne \ell_1$ and also by the case assumption $a_1=a_n=0$.
    \item[(ii)] Either $n=2k$ and we have $a_2=a_4=...=a_n=1$, $a_3=a_5=...=a_{n-1}=0$ (the alternating labeling) which is a contradiction to $a_n=0$. In other words, the case $\ell_2$ cannot occur here due to parity considerations. Either $n=2k+1$ and we have $a_2=a_4=...=a_n=1$, $a_3=a_5=...=a_{n-1}=0$ (the alternating labeling). But this ``problematic'' labeling is $\ell_2$ which is excluded by our assumption.
    \end{itemize}
\item Now assume $a_1=a_n=1$. We can change $a_0$ to 0 by shrinking the component from one side. If $a_2=0$ we won by applying $\T_1$ and then $\T_0$. If $a_2=1$ and $a_3=0$ we can win again ($\T_2$,$\T_1$,$\T_0$). So in order not to win we need that the $a_0=1$ component cannot be shrunk from any side. This happens only when
    $a_1=a_2=a_3=...=a_{n-1}=a_n=1$ but since $a_0=1$ by assumption we get that $\aa = (1;1,...,1) = \ell_1$ which is a contradiction to our assumption.
\end{itemize}
Thus we can always win under our assumptions, and the sublemma is proved.

\bigskip

As a corollary we get that if $\aa \ne \ell_1, \ell_2$ then $[\aa] \in \Img \varphi_*$ since by the sublemma, if $\aa = (a_0,a_1,...,a_n) \ne \ell_1, \ell_2$ then we can change it by a sequence of moves such that $a'_0=0$ and this implies that $\aa \sim \aa' = \varphi(\aa'')$ where $\aa' = (0;\aa'')$, and thus $[\aa] = \varphi_*([\aa''])$ and $[\aa] \in \Img \varphi_*$. Note also that $[\ell_1] = \{ \ell_1 \}$ and $[\ell_2] = \{ \ell_2 \}$ are the only classes not in the image of $\varphi_*$. Thus, if $n=2k$ then $\Img \varphi_* = \Cl(\tilde{\AA}_n ) \smallsetminus \{ [\ell_1] \}$ and if $n=2k+1$ then $\Img \varphi_*  = \Cl(\tilde{\AA}_n ) \smallsetminus  \{ [\ell_1] , [\ell_2] \}$.
\medskip

Thus we conclude, if $n=2k$ then
\[
\Cl(\tilde{\AA}_n ) = \Img \varphi_* \sqcup \{ [\ell_1] \} \cong \Cl(\AA_n ) \sqcup \{ [\ell_1] \}
\]
which implies
\[
 \Cls{\tilde{\AA}_n} = \Cls{\AA_n} + 1 = \left\lceil \frac{n}{2} \right\rceil + 1 + 1 = \left\lceil \frac{n}{2} \right\rceil + 2 = k + 2 \, \]
and if $n=2k+1$ then
\[
\Cl(\tilde{\AA}_n ) = \Img \varphi_* \sqcup \{ [\ell_1] , [\ell_2] \} \cong \Cl(\AA_n )  \sqcup \{ [\ell_1] , [\ell_2] \}
\]
which implies
\[
\Cls{\tilde{\AA}_n} = \Cls{\AA_n} + 2 = \left\lceil \frac{n}{2} \right\rceil + 1 + 2 = \left\lceil \frac{n}{2} \right\rceil + 3 = k+1 + 3 = k+4 \ ,
\]
and we are done.
\end{proof}

\begin{corollary}
For $\tilde{\AA}_n$ we can take the following \minreps: $\ell_1 = (1;1,...,1), \ (0;\xi_0), \ (0;\xi_1), \ ... \ , (0;\xi_r)$ where $r=\lceil n/2 \rceil$ and $\ell_2$ when $n=2k+1$.
\end{corollary}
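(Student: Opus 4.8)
The plan is to read the representatives directly off the analysis already carried out in the proof of Proposition~\ref{prop:tilde-An}, and then verify the minimality claim separately. That proof establishes the decompositions $\Cl(\tilde{\AA}_n) = \Img\varphi_* \sqcup \{[\ell_1]\}$ when $n=2k$ and $\Cl(\tilde{\AA}_n) = \Img\varphi_* \sqcup \{[\ell_1],[\ell_2]\}$ when $n=2k+1$, together with the injectivity of $\varphi_*$. Since by Corollary~\ref{cor:An-reps} the classes of $\Cl(\AA_n)$ are represented by the canonical labelings $\xi_0,\xi_1,\dots,\xi_r$ with $r=\lceil n/2\rceil$, their images $\varphi_*([\xi_j]) = [(0;\xi_j)]$ are pairwise distinct and exhaust $\Img\varphi_*$. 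Adjoining $[\ell_1]$ (and $[\ell_2]$ in the odd case) then yields exactly one representative per class, so the listed labelings form a complete and irredundant set of representatives, which also recovers the count of Proposition~\ref{prop:tilde-An}.

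It remains to check that each listed labeling is \emph{minimal}, i.e.\ carries the least number of $1$'s in its class. For the labelings $(0;\xi_j)$ this is a component count: deleting the zero at vertex $0$ breaks the cycle and leaves the path $1-2-\cdots-n$, so $(0;\xi_j)$ has exactly $j$ components, each a single $1$, hence $j$ ones in all. By Lemma~\ref{lem:tilde-An-components} the number of components is invariant along an orbit, and every component contains at least one $1$; therefore no labeling equivalent to $(0;\xi_j)$ has fewer than $j$ ones, and $(0;\xi_j)$ attains this bound. For the fixed labelings minimality is automatic: by Lemmas~\ref{lem:tilde-An-fixed-points1} and~\ref{lem:tilde-An-fixed-points2} their classes are the singletons $\{\ell_1\}$ and $\{\ell_2\}$, so the unique element of each is trivially its minimal representative --- even though $\ell_1$ carries $n+1$ ones.

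The only point that deserves care is the bookkeeping in the odd case $n=2k+1$, where two distinct classes share the same number of components: both $(0;\xi_{k+1})$ and $\ell_2$ have $k+1$ components (one checks directly that $\ell_2$ has its $1$'s at the vertices $0,2,4,\dots,2k$). This is exactly why the component count alone fails to separate classes for odd $n$ and why $\ell_2$ must be listed separately; it is already absorbed into the proposition's count via the term $(n\bmod 2)$, so no inconsistency arises. I expect no genuine obstacle here: the corollary is essentially a transcription of the proposition's proof, and the one new ingredient --- minimality of the $(0;\xi_j)$ --- reduces entirely to the invariance of the component count from Lemma~\ref{lem:tilde-An-components}.
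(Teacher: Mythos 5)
Your proposal is correct and follows essentially the same route as the paper: both read the class list off the decomposition $\Cl(\tilde{\AA}_n)=\Img\varphi_*\sqcup\{[\ell_1]\}$ (resp. $\sqcup\{[\ell_1],[\ell_2]\}$) from Proposition \ref{prop:tilde-An} together with the component-count invariance of Lemma \ref{lem:tilde-An-components}. Your explicit verification of minimality (each component forces at least one $1$, and the fixed labelings are singleton classes) is a detail the paper leaves implicit, but it is not a different argument.
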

\begin{proof}
By Proposition \ref{prop:tilde-An} and Lemma \ref{lem:tilde-An-components} we see that by excluding the fixed labelings, the number of components completely characterizes the class. We add the fixed labelings to account for the classes with 1 component of all 1's, and $k+1$-components $\ell_2$ when $n=2k+1$.
\end{proof}

\subsection{Diagrams of type $\BB_n$}
\label{sec:Bn} \label{subsec:Bn}

\expBE

The diagram $\BB_n$  is
\begin{equation*}
\sxymatrix{ \bc{1} \rline & \cdots & \lline \bc{n-1} \ar@{=>}[r] & \bc{n} }
\end{equation*}
(this Dynkin diagram corresponds to the compact group $\Spin(2n+1)$ which is the universal cover of the special orthogonal group $\SO(2n+1)$).
The long vertex does not see the short vertex: i.e. they are not affected by the label at the short
vertex. Writing every labeling $\bb$ as
\[ \bb = (\a \RRR \vk), \]
where $\vk \in \{0,1\}$ is the label at the short vertex and $\aa \in L(\AA_{n-1})$ is the tail with $n-1$ vertices,
the moves $\T^B_i$ of $L(\BB_n)$ act as follows:
\begin{enumerate}
\item[(a)] For $i < n-1$ : $\T^B_i(\bb) = (\T^A_i(\a) \Rightarrow \vk)$.
\item[(b)] For $i = n-1$ : $\T^B_{n-1}(\bb) = (\T^A_{n-1}(\a) \Rightarrow \vk)$.
\item[(c)] For $i = n$ : $(\T_n^B(\bb) )_n = \vk + (\bb)_{n-1}$.
\end{enumerate}
(here $\T^A_i$ is the move on $L(\AA_{n-1})$).

\begin{lemma}[The Swallowing Lemma] \label{lem:Bn-shortroot-shrink}
 If $\a\in L(\AA_{n-1})$, $\a\neq 0$, then $\a \RRR 0$ is equivalent to $\a\RRR 1$.
\end{lemma}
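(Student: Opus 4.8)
The plan is to exploit the special structure of the moves listed just before the lemma: every move $\T^B_i$ with $i\le n-1$ leaves the short label $\vk$ untouched and acts on the tail $\a$ exactly as the corresponding move $\T^A_i$ of $L(\AA_{n-1})$, whereas the single move $\T^B_n$ toggles $\vk$ precisely when the adjacent vertex $n-1$ carries the label $1$. So I would reduce the whole problem to a three-step maneuver: first use only tail-moves to bring a $1$ onto vertex $n-1$ (this costs nothing at the short vertex), then flip $\vk$ with $\T^B_n$, and finally run the tail-moves backwards to restore $\a$.

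First I would invoke the $\AA_{n-1}$ theory. Since $\a\neq 0$, it has some number of components $r\ge 1$, so by Theorem \ref{th:basic-An}(ii) it is equivalent in $L(\AA_{n-1})$ to the right-packed canonical labeling $\eta_r$, whose rightmost vertex (which is exactly vertex $n-1$, the neighbor of the short vertex) carries a $1$. Because items (a) and (b) in the list of moves show that each $\T^A_i$ with $i\le n-1$ lifts to a $\T^B_i$ that acts identically on $\a$ and fixes $\vk$, this equivalence lifts verbatim to $\BB_n$: for either value of $\vk$ one obtains $(\a\RRR\vk)\sim(\eta_r\RRR\vk)$.

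Next I would apply $\T^B_n$ to the configuration $(\eta_r\RRR 0)$. By move (c), $(\T^B_n(\bb))_n=\vk+(\bb)_{n-1}$, and since $(\eta_r)_{n-1}=1$ this sends $\vk=0$ to $\vk=1$, i.e. $(\eta_r\RRR 0)\sim(\eta_r\RRR 1)$. Chaining the three equivalences then gives
\[
(\a\RRR 0)\sim(\eta_r\RRR 0)\sim(\eta_r\RRR 1)\sim(\a\RRR 1),
\]
which is exactly the assertion of the Swallowing Lemma.

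The argument is almost entirely bookkeeping, so there is no serious obstacle; the only point that genuinely uses a hypothesis is the reduction to $\eta_r$, which needs $\a\neq 0$ so that $r\ge 1$ and vertex $n-1$ can be made to carry a $1$. Indeed, if $\a=0$ there is no component to push to the right end and $\T^B_n$ can never change $\vk$, which is precisely why the hypothesis $\a\neq 0$ is indispensable. The one thing I would double-check is that the lifted tail-moves really do leave $\vk$ fixed throughout, but this is immediate from (a) and (b).
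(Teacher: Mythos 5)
Your proof is correct and follows essentially the same strategy as the paper's: use tail moves (which fix $\vk$ by (a)--(b)) to place a $1$ on vertex $n-1$, toggle $\vk$ with $\T_n$ via (c), and then reverse the tail moves. The only cosmetic difference is that you route through the canonical representative $\eta_r$, whereas the paper merely expands the rightmost component of $\a$ one step to the right, swallows, and shrinks back.
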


\begin{proof}
Consider the rightmost component of 1's in the long roots (it clearly exists since $\aa \ne 0$). It can be expanded to the right (see
Lemma \ref{lem:basic-An}), {\em swallow} the short root's 1 (by move $\T_n$)
and then shrink back to the original place and length,
\[
\left( \sxymatrix{ \cdots \rline & 1 \rline & 0  \ar@{=>}[r] & 1 } \right)
\overset{\T_{n-1}}{\longmapsto} \left( \sxymatrix{ \cdots \rline & 1 \rline & 1  \ar@{=>}[r] & 1
}\right)
 \overset{\T_{n}}{\longmapsto} \left( \sxymatrix{ \cdots \rline & 1 \rline & 1  \ar@{=>}[r] & 0 } \right)
  \overset{\T_{n-1}}{\longmapsto} \left( \sxymatrix{ \cdots \rline & 1 \rline & 0  \ar@{=>}[r] & 0 }  \right) \ .
\]
The actions of $\T_{n-1}$ and $\T_n$ described in (a)--(c) above assure us that this algorithm works: property (a) allows us to do the usual manipulations
on the tail $\AA_{n-1} = (b_i)_{i=1}^{n-1}$, property (b) implies that $\T_{n-1}$ can change the label $b_{n-1}$ from 0 to 1 even when $\vk=1$,
then (c) gives us swallowing by $\T_n$ and applying $\T_{n-1}$ by (b) give us shrinking back.
\end{proof}

\begin{remark}
The labeling
\begin{equation*}
\ell_1^{(0)} = \left( \xi_0 \RRR 1 \right) = \left( 0 \ll ... \ll 0 \RRR 1 \right) \ .
\end{equation*}
is a fixed labeling. We denote by $[\ell_1^{(0)}] \in \Or(\BB_n)$ the orbit $\{\ell_1^{(0)}\}$ of $\ell_1^{(0)}$ and by slight abuse of notation also the set $\{ [\ell_1^{(0)}] \}$.
\end{remark}

\begin{lemma} \label{lem:Bn-bijection}
The map
\begin{equation}
\varphi :
\begin{array}{rcl}
L(\AA_{n-1}) & \longrightarrow & L(\BB_n) \\ \aa & \longmapsto & (\sxymatrix{\a \arr &0}) \end{array}
\end{equation}
induces a bijection on the orbits $\varphi_* \colon \Orbset(\AA_{n-1}) \isoto \Orbset(\BB_n) \smallsetminus \{[\ell_1]\}$ defined by $\varphi_*([\a]) = [ \varphi(\a)]$.
\end{lemma}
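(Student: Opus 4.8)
The plan is to establish that $\varphi_*$ is well defined, injective, and has image exactly $\Orbset(\BB_n) \smallsetminus \{[\ell_1]\}$, where $\ell_1$ denotes the fixed labeling $(\xi_0 \RRR 1)$. The two engines of the argument are the number of components of the tail, used as an invariant for injectivity, and the Swallowing Lemma (Lemma~\ref{lem:Bn-shortroot-shrink}), used for surjectivity. First I would check well-definedness: if $\aa \sim \aa'$ in $\AA_{n-1}$, say $\aa' = \T^A_{i_1} \circ \cdots \circ \T^A_{i_r}(\aa)$, then by properties (a) and (b) of the $\BB_n$-moves each $\T^A_i$ is realized by $\T^B_i$ acting on the tail while fixing the short-root label. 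Applying the same sequence $\T^B_{i_1}, \ldots, \T^B_{i_r}$ to $\varphi(\aa) = (\aa \RRR 0)$ yields $(\aa' \RRR 0) = \varphi(\aa')$, so $\varphi(\aa) \sim \varphi(\aa')$ and $\varphi_*([\aa]) = [\varphi(\aa)]$ does not depend on the representative.

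For injectivity I would introduce the invariant $c(\bb)$ equal to the number of connected components of the restriction of $\bb$ to the tail $\{1, \ldots, n-1\}$. The moves $\T^B_i$ with $i \le n-1$ act on the tail exactly as the corresponding $\AA_{n-1}$-moves, hence preserve $c$ by Theorem~\ref{th:basic-An}(i) (equivalently Lemma~\ref{lem:invartiant-comp}); the move $\T^B_n$ changes only the short-root label and therefore leaves the tail, and thus $c$, untouched. So $c$ is constant on $\BB_n$-orbits. If $\varphi(\aa) \sim \varphi(\aa')$ then $c(\varphi(\aa)) = c(\varphi(\aa'))$, and since $c(\varphi(\aa))$ is precisely the number of components of $\aa$ in $\AA_{n-1}$, the labelings $\aa$ and $\aa'$ have the same number of components. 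By Theorem~\ref{th:basic-An}(iii) they are equivalent, giving $[\aa] = [\aa']$.

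For surjectivity I would take an arbitrary $\bb = (\aa \RRR \vk) \in L(\BB_n)$ and split on whether the tail vanishes. If $\aa \ne 0$, the Swallowing Lemma gives $(\aa \RRR \vk) \sim (\aa \RRR 0) = \varphi(\aa)$ for either value of $\vk$, so $[\bb] = \varphi_*([\aa]) \in \Img \varphi_*$. If $\aa = 0$ and $\vk = 0$ then $\bb = (0 \RRR 0) = \varphi(\xi_0)$, again in the image. The sole remaining labeling is $\aa = 0$, $\vk = 1$, that is $\bb = \ell_1$. I would then note that $[\ell_1]$ truly lies outside $\Img \varphi_*$: since $\ell_1$ is fixed, its orbit is $\{\ell_1\}$, whereas every labeling in the image of $\varphi$ carries the label $0$ at the short root and so can never equal $\ell_1$. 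Combining the cases shows $\Img \varphi_* = \Orbset(\BB_n) \smallsetminus \{[\ell_1]\}$, and together with injectivity this yields the asserted bijection.

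The main obstacle is injectivity, which rests on isolating the right invariant, namely the tail-component count, and verifying that $\T^B_n$ does not disturb it even though it can alter the total number of components of the full diagram. Surjectivity is comparatively routine, being a direct application of the Swallowing Lemma together with the small case check for the zero tail that pinpoints $\ell_1$ as the unique excluded orbit.
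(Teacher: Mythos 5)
Your proposal is correct and follows essentially the same route as the paper: well-definedness from the compatibility of the $\BB_n$-moves with the $\AA_{n-1}$-moves on the tail, injectivity from the component count of the tail combined with Theorem \ref{th:basic-An}, and surjectivity onto $\Orbset(\BB_n)\smallsetminus\{[\ell_1]\}$ via the Swallowing Lemma plus the observation that $\ell_1$ is a fixed labeling outside the image. Your treatment of injectivity is in fact slightly more careful than the paper's, since you isolate the tail-component count $c(\bb)$ as the invariant and explicitly check that $\T^B_n$ does not disturb it, a point the paper leaves implicit.
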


\begin{proof}

We now need to show that $\a \Wsim{A} \a' \iff \varphi(\a) \Wsim{B} \varphi(\a')$. 
That $\a \Wsim{A} \a' \implies \varphi(\a) \Wsim{B} \varphi(\a')$ is clear. Conversely, if $\varphi(\a) \Wsim{B} \varphi(\a')$. Then $(\a \RRR 0) \Wsim{B} (\a' \RRR 0)$ which implies $\aa$ and $\aa'$ has the same number of components, and by Theorem \ref{th:basic-An} $\a \Wsim{A} \a'$.
Thus $\varphi_*([\a]) = \varphi_*([\a']) \implies [\a]=[\a']$ and thus $\varphi_*$ is injective. Thus the map is well-defined and injective.

Let $\bb \in L(\BB_n) \smallsetminus \{ \ell_1^{(0)} \}$. If $\bb \ne 0_B$ then $\bb$ has at least one
long root with label 1. Then by Lemma \ref{lem:Bn-shortroot-shrink} we have $\bb \Wsim{B}
(\aa \RRR 0) = \varphi(\aa)$ for some $\a \in L(\AA_{n-1})$. Thus $[\bb] = \varphi_*([\a])$.
If $\bb = 0_B$ then clearly $\bb = \varphi(0_A)$. This implies $\varphi_*$ is surjective on
$\Orbset(\BB_n)\smallsetminus \{[\ell_1^{(0)}]\}$. Clearly, $\ell_1^{(0)} \ne \varphi(\a)$ for any $\a \in
L(\AA_{n-1})$ since $(\ell_1)_n = 1$. Since $\ell_1$ is a fixed labeling then $\ell_1^{(0)} \not\Wsim{B} \varphi(\a)$ for any $\a \in L(\AA_{n-1})$.
These show that $\im \varphi_* = \Orbset(\BB_n) \smallsetminus \{ [\ell_1^{(0)}] \}$.


We conclude that $\varphi_* \colon \Orbset(\AA_{n-1}) \isoto \Orbset(\BB_n) \smallsetminus \{[\ell_1^{(0)}]\}$ is bijective and the lemma follows.
\end{proof}


\begin{corollary} \label{cor:Bn-reps} For $\BB_n$ we have:
\begin{enumerate}
\item[(i)]
\Zerois the invariant 0 and represented by $(\xi_0\! \RRR \! 0)$.
\item[(ii)]
For \reps we can take
$(\xi_0\! \RRR \!1), \ (\xi_0 \! \RRR \! 0), \ (\xi_1\! \RRR \! 0), \ (\xi_2 \! \RRR \! 0) \ , \ ... \ , (\xi_r \! \RRR \! 0)$
with $r = k =
\lceil \frac{n-1}{2} \rceil$. Note that by Lemma \ref{lem:Bn-shortroot-shrink} we can choose the
short root label to be 0 whenever $j > 0$.
\item[(iii)]
The number of orbits is
\begin{eqnarray}
\# H^1(\R,G)=\Orbs{\BB_n} & = & \Orbs{ \AA_{n-1} } + 1 = 2 + \left\lceil \frac{n-1}{2} \right\rceil \nonumber
\\ & = & \begin{cases} k+2 & \mbox{ if } n=2k \ , \\ k+2 & \mbox{ if }
n=2k+1 \ . \end{cases}
\end{eqnarray}
\end{enumerate}
\end{corollary}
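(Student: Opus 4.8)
The plan is to read off all three parts directly from the bijection established in Lemma~\ref{lem:Bn-bijection}, feeding in the solution of $\AA_{n-1}$ from Corollary~\ref{cor:An-reps} and Theorem~\ref{th:basic-An}. The whole argument is bookkeeping once the bijection $\varphi_*\colon \Orbset(\AA_{n-1}) \isoto \Orbset(\BB_n)\smallsetminus\{[\ell_1^{(0)}]\}$ is in hand; there is no genuinely hard step, only one subtlety to flag below.

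First I would prove (iii). Since $\varphi_*$ is a bijection of $\Orbset(\AA_{n-1})$ onto $\Orbset(\BB_n)$ with exactly one class omitted, namely $[\ell_1^{(0)}]$, I get $\Orbs{\BB_n}=\Orbs{\AA_{n-1}}+1$. Substituting $\Orbs{\AA_{n-1}}=\lceil (n-1)/2\rceil+1$ from Corollary~\ref{cor:An-reps} gives $\Orbs{\BB_n}=\lceil (n-1)/2\rceil+2$. I would then check both parities: for $n=2k$ one has $\lceil(2k-1)/2\rceil=k$, and for $n=2k+1$ one has $\lceil 2k/2\rceil=k$, so in either case $\Orbs{\BB_n}=k+2$, as claimed.

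Next I would prove (ii) and (i) together. By Corollary~\ref{cor:An-reps} the minimal representatives of $\Orbset(\AA_{n-1})$ are $\xi_0,\xi_1,\dots,\xi_r$ with $r=\lceil(n-1)/2\rceil=k$. Applying $\varphi$ (i.e.\ appending $\RRR 0$) sends these to $(\xi_0\RRR 0),(\xi_1\RRR 0),\dots,(\xi_r\RRR 0)$, which by the definition of $\varphi_*$ represent exactly the classes in $\im\varphi_*=\Orbset(\BB_n)\smallsetminus\{[\ell_1^{(0)}]\}$. Adjoining a representative of the one missing class, $\ell_1^{(0)}=(\xi_0\RRR 1)$, yields the full list in (ii). For (i), the zero labeling of $\BB_n$ is $(\xi_0\RRR 0)=(0\ll\cdots\ll 0\RRR 0)$, which equals $\varphi(\xi_0)$ and hence corresponds under $\varphi_*$ to the zero class of $\AA_{n-1}$, whose invariant (number of components) is $0$.

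The one point that requires care — and the closest thing to an obstacle — is that $(\xi_0\RRR 0)$ and $(\xi_0\RRR 1)$ genuinely lie in different classes, so that the list in (ii) has no repetitions. This does \emph{not} follow from the Swallowing Lemma (Lemma~\ref{lem:Bn-shortroot-shrink}), whose hypothesis $\a\neq 0$ fails precisely for $\a=\xi_0=0$; instead it is exactly the content of the bijection lemma, where $\ell_1^{(0)}=(\xi_0\RRR 1)$ is identified as the unique fixed labeling lying outside $\im\varphi$. For every $j>0$, by contrast, the Swallowing Lemma shows $(\xi_j\RRR 0)\Wsim{B}(\xi_j\RRR 1)$, so the short-root label may be chosen freely; I would note this to justify the remark appended to (ii). Finally I would confirm that the list in (ii) matches (iii): it has $(r+1)+1=k+2$ entries, consistent with the cardinality computed above.
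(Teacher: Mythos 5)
Your proposal is correct and follows essentially the same route as the paper: the paper's proof also reduces all three parts to the bijection $\varphi_*$ of Lemma \ref{lem:Bn-bijection} together with the known solution for $\AA_{n-1}$, merely stating the deductions without the bookkeeping you spell out. Your added remark that the distinctness of $(\xi_0\RRR 0)$ and $(\xi_0\RRR 1)$ rests on the bijection lemma (and not on the Swallowing Lemma, whose hypothesis excludes $\a=0$) is a correct and worthwhile clarification of a point the paper leaves implicit.
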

\begin{proof}
(i) is easy. (ii) follows from the bijectivity of the map constructed in Lemma \ref{lem:Bn-bijection}. (iii) follows from (ii).
\end{proof}

\subsection{Diagrams of type $\tilde{\BB}_n$} \label{subsec:tilde-Bn}
\label{sec:tilde-Bn}

The affine Dynkin diagram of type $\tilde{\BB}_n$ with $n+1$ vertices labeled $\{0,1,...,n\}$ with $n \ge 4$ is
\begin{equation*}
\sxymatrix{ \bc{1} \rline & \bc{2} \dline \rline & \cdots \rline & \bc{n-1} \arr & \bc{n} \\
& \bcu{0} & & }
\end{equation*}
It is denoted $\BB_n^{(1)}$ in \cite[Table 6]{OV}.
We introduce short notation for this diagram:
\begin{equation*}
\bb = \left( \frac{b_1}{b_0}b_2 \cdots b_{n-1} \RRR b_n \right)
\end{equation*}
We call vertices 0 and 1 {\em leafs}, vertices $2,...,n-1$ {\em the neck} and to the short vertex $n$ we call {\em ghost}.

\begin{remark} \label{lem:tilde-Bn-fixed-points}\label{rem:tilde-Bn-fixed-points}
For $\tilde{\BB}_n$ we always have the following fixed labelings:
\begin{equation*}
\ell_0 = \frac{0}{0}0...0\RRR0 , \quad \ell_2 = \frac{1}{1}0...0\RRR0 , \quad \ell_1 = \frac{0}{0}0...0\RRR1 , \quad \ell_3 = \frac{1}{1}0...0\RRR1 .
\end{equation*}
\end{remark}

\begin{proposition}
If $n=2k+1$ then $\Cls{\tilde{\BB}_n} = k+4$.
\end{proposition}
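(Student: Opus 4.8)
The plan is to handle the short ``ghost'' vertex $n$ exactly as the short root was handled for $\BB_n$, thereby reducing the whole problem to the simply-laced \emph{core} $\DD_n$ obtained by deleting the ghost (the vertices $0,1,\dots,n-1$, with $0$ and $1$ attached to $2$, form a $\DD_n$ diagram), and then to count the classes of that core. First I would prove the analogue of the Swallowing Lemma (Lemma \ref{lem:Bn-shortroot-shrink}) for $\tilde{\BB}_n$: if the core carries a $1$ that can be pushed out to the neck-end vertex $n-1$, then expanding a component to $n-1$, applying $\T_n$, and shrinking back toggles the ghost freely, so such a labeling is equivalent to one with $b_n=0$. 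The only cores from which no $1$ can reach vertex $n-1$ are the zero core and the ``both-leaf'' core $\frac{1}{1}0\cdots0$, which is fixed in $\DD_n$ since the leaf $1$'s cannot cross the fork at vertex $2$; with the ghost set to $1$ these give exactly the two trapped fixed labelings $\ell_1$ and $\ell_3$.

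Next I would set up the reduction map $\psi\colon L(\DD_n)\to L(\tilde{\BB}_n)$, $\aa\mapsto(\aa\RRR 0)$, and study the induced $\psi_*$ on classes. Since $\T_0,\dots,\T_{n-1}$ act on the core precisely as the $\DD_n$-moves (the long vertex $n-1$ does not see the ghost) while $\T_n$ fixes the core, projecting any $\tilde{\BB}_n$-equivalence to the core produces a $\DD_n$-equivalence; this gives injectivity of $\psi_*$, exactly mirroring Lemma \ref{lem:Bn-bijection}. By the swallowing step its image is $\Cl(\tilde{\BB}_n)\smallsetminus\{[\ell_1],[\ell_3]\}$, and $[\ell_1]\neq[\ell_3]$ because they have different cores and are both fixed. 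Hence $\Cls{\tilde{\BB}_n}=\Cls{\DD_n}+2$, and everything is reduced to counting $\Cl(\DD_n)$ for $n=2k+1$.

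For the core count I would use the parity of the number of components, which is an invariant by Lemma \ref{lem:vertex3} (the core is a tree of maximal degree $3$, and $\T_n$ does not affect it), together with a careful analysis of the unique degree-$3$ vertex. Moving components along the tail is free by Lemma \ref{lem:invartiant-comp}, while $\T_2$ either splits a fork-spanning component into three pieces or merges three adjacent pieces into one, changing the component count by $\pm2$. The crucial observation is that a genuine merge needs $b_0=b_1=b_3=1,\ b_2=0$ with the two leaves in \emph{separate} components, yet each leaf can be turned into $1$ only while $b_2=1$; this ``chicken-and-egg'' obstruction blocks most reductions. I expect to show that the minimal number of components reachable in an orbit is a complete invariant for the non-fixed classes, that it takes each value $0,1,\dots,k$ exactly once (the value $k$ realized by the rigid, maximally packed tail $\{2\},\{4\},\dots,\{2k\}$), and that the single fixed both-leaf labeling contributes one extra class at minimal value $2$. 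This yields $\Cls{\DD_{2k+1}}=(k+1)+1=k+2$, whence $\Cls{\tilde{\BB}_n}=k+4$.

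The hard part will be this last step: showing that the minimal-component number is genuinely invariant and complete, i.e.\ that every labeling reduces to the canonical packed representative of its value and that packed configurations with different component numbers are inequivalent. This is precisely where the fork obstruction must be controlled, since naive attempts to merge components force a split first (raising the count), so one must argue that isolated leaves can arise only by splitting a pre-existing fork-spanning component. I would organize this as a chain of normal-form lemmas (push all components toward the fork, reduce when both leaves are natively available as separate components, and otherwise recognize the packed form as irreducible), in parallel with the $\AA_n$ normal-form results of Lemma \ref{lem:basic-An} and Theorem \ref{th:basic-An}.
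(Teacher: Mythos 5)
Your reduction is correct and arrives at the right count, but by a genuinely different decomposition than the paper's. The paper strips the ghost and both leaves in a single step, exhibiting $\Cl(\tilde{\BB}_n)$ as $\Cl(\AA_{n-2})$ together with the three trapped fixed classes $[\ell_1],[\ell_2],[\ell_3]$, giving $(k+1)+3=k+4$; you strip only the ghost, exhibiting $\Cl(\tilde{\BB}_n)$ as $\Cl(\DD_n)$ together with the two ghost-trapped classes $[\ell_1],[\ell_3]$, giving $(k+2)+2=k+4$. The two bookkeepings agree because the paper's $[\ell_2]$ (both leaves on, ghost off) is precisely the extra fixed class of the odd-rank $\DD_n$ core, so it is absorbed into your $\Cl(\DD_n)$ term. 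Your version is more modular: the swallowing analysis at the short end and the leaf analysis at the forked end are cleanly separated (the paper instead defers its ``Assertion 2'' to the $\DD_n$ and $\tilde{\DD}_n$ proofs), and your injectivity argument for $\psi_*$ --- project every move to the core, using that the long vertex $n-1$ does not see the ghost and that $\T_n$ fixes the core --- is cleaner than counting components. Your identification of the trapped classes is also sound: a class lies outside $\Img\psi_*$ exactly when no labeling in the corresponding core class ever places a $1$ on vertex $n-1$, and for odd $n$ only the zero core and the fixed both-leaf core have that property.

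The one place you should not follow through as written is the last step. The count $\Cls{\DD_{2k+1}}=k+2$ is already established in Section \ref{sec:Dn} (Proposition \ref{prop:orbits-Dn} and its corollaries, via the reduction to $\AA_{n-2}$ plus the single extra fixed labeling $\xi_0^{n-2}\frac{1}{1}$), and you should simply cite it. Your proposed re-derivation via ``the minimal number of components reachable in an orbit'' is the weak link: the raw component count is \emph{not} invariant on a $\DD$-shaped diagram (a split or merge at the fork changes it by $2$), so proving that the minimal reachable value is a complete invariant is exactly as hard as the $\DD_n$ classification itself, and the ``chicken-and-egg'' obstruction you sketch does not by itself rule out that, for large $n$, a one-component labeling could be equivalent to a three-component one of the same parity. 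Replacing that paragraph with a citation of the established $\DD_n$ result closes the argument.
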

\begin{proof}
We construct a map $\varphi : L(\AA_{n-2}) \to L(\tilde{\BB}_n)$ defined by
\[ \aa \longmapsto \bb = \frac{0}{0}\aa \RRR 0 \]
and the quotient map $\varphi_* \Cl(\AA_{n-2}) \to \Cl(\tilde{\BB}_n)$ defined by $\varphi_*([\aa]) = [\varphi(\aa)]$. Clearly, the quotient map is well-defined. It is also injective. Let $\bb' \sim \bb \in \Img \varphi$, then $\bb = \frac{0}{0}\aa \RRR 0$ and $\bb' = \frac{0}{0}\aa' \RRR 0$, and clearly they both have the same number of components, which implies that $\aa$ and $\aa'$ have the same number of components. Theorem \ref{th:basic-An} implies that $\aa \sim \aa'$. We showed that if $\varphi_*([\aa]) = [\bb] = [\bb'] = \varphi_*([\aa'])$ then $[\aa] = [\aa']$.

We now need to compute its image and show that
\[ \Img \varphi_* = \Cl(\tilde{\BB}_n) \smallsetminus \{ [\ell_1],[\ell_2],[\ell_3] \} . \]
Clearly, those fixed labelings are not in the image of $\varphi_*$. It remains to show that all the other classes are in the image of $\varphi_*$. So we need to show two things:
\begin{enumerate}
\item If $b_n=1$ and $\bb$ is not a fixed labeling, it is equivalent to a labeling with $b'_n=0$.
\item Every non-fixed labeling is equivalent to a labeling with $b'_0=b'_1=0$.
\end{enumerate}

Assertion 1 follows from the Swallowing Lemma (see Section \ref{sec:Bn}) as we need at least 1 component in the neck. If the neck has no components then: if $b_0=b_1=0$ then $\bb = \ell_1$ which we assumed is not the case, if $b_0=b_1=1$ then $\bb = \ell_3$ which we assumed is not the case, if $b_0=1, b_1=0$ we can push the component to the neck and the swallow the ghost's 1.

Assertion 2 is proved for the cases $\DD_n$ and $\tilde{\DD}_n$ (see Sections \ref{subsec:Dn} and \ref{subsec:tilde-Dn}), and the same proofs also work here.

Finally we note that the maximal number of components we can insert in the neck is $k$. So we have
\[
\Cls{\tilde{\BB}_n} = \Cls{\AA_{n-2}} + 3 = (\left\lceil \frac{n-2}{2} \right\rceil + 1) + 3 = \left\lceil \frac{2k-1}{2} \right\rceil + 1 + 3 = k+4 \ .
\]
\end{proof}

\begin{remark} \label{lem:tilde-Bn-semifixed}\label{rem:tilde-Bn-semifixed}
For $n=2k$ we have the following two classes:
\begin{eqnarray}
\kappa_0 & = & \left\{ \frac{0}{1}\eta_{k-1}^{n-2}\RRR0 , \frac{0}{1}\eta_{k-1}^{n-2}\RRR1 \right\} \\
\kappa_1 & = & \left\{ \frac{1}{0}\eta_{k-1}^{n-2}\RRR0 , \frac{1}{0}\eta_{k-1}^{n-2}\RRR1 \right\}
\end{eqnarray}
We call $\kappa_0$ and $\kappa_1$ the {\em semi-fixed classes}.
\end{remark}

For example:
\[ \tilde{\BB}_4 , \quad \kappa_1 = \left\{ \frac{1}{0}01\RRR0 , \ \frac{1}{0}01\RRR1 \right\} \ . \]

\begin{proposition}
If $n=2k$ then $\Cls{\tilde{\BB}_n} = k+5$.
\end{proposition}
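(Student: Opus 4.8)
The plan is to run the very same argument as in the odd case and then correct the count by the two extra semi-fixed classes. I reuse the map $\varphi : L(\AA_{n-2}) \to L(\tilde{\BB}_n)$, $\aa \mapsto \frac{0}{0}\aa \RRR 0$, together with its induced map $\varphi_* : \Cl(\AA_{n-2}) \to \Cl(\tilde{\BB}_n)$. Well-definedness and injectivity of $\varphi_*$ are established exactly as above: they rest only on Theorem \ref{th:basic-An} (the number of components is a complete invariant on $\AA_{n-2}$) and on the fact that two labelings of the shape $\frac{0}{0}\,\cdot\,\RRR 0$ that are equivalent in $\tilde{\BB}_n$ carry the same number of neck-components; neither input involves the parity of $n$. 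Hence $\Cl(\AA_{n-2})$ embeds into $\Cl(\tilde{\BB}_n)$ and everything reduces to locating the complement of $\Img \varphi_*$.

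I claim that for $n = 2k$ one has $\Img\varphi_* = \Cl(\tilde{\BB}_n) \smallsetminus \{[\ell_1], [\ell_2], [\ell_3], \kappa_0, \kappa_1\}$, so that exactly two more classes sit outside the image than in the odd case. That these five classes avoid the image is immediate: by Remark \ref{rem:tilde-Bn-fixed-points} each of $\ell_1, \ell_2, \ell_3$ is a singleton orbit with a leaf equal to $1$ or with ghost equal to $1$, hence unequal to any $\varphi(\aa) = \frac{0}{0}\aa \RRR 0$; and by Remark \ref{rem:tilde-Bn-semifixed} every labeling in $\kappa_0$ (resp. $\kappa_1$) carries the leaf $b_0 = 1$ (resp. $b_1 = 1$), so neither of its two elements has the form $\frac{0}{0}\,\cdot\,\RRR 0$ either. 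What remains is the converse: every class outside this list meets the image, which as in the odd case splits into two reductions — (1) if $b_n = 1$ and $\bb$ is not fixed then $\bb \sim \bb'$ with $b_n' = 0$, and (2) every labeling that is neither fixed nor semi-fixed is equivalent to one with $b_0' = b_1' = 0$. Reduction (1) is the Swallowing Lemma \ref{lem:Bn-shortroot-shrink} verbatim, using one component in the neck, the neckless exceptions being $\ell_1, \ell_3$.

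The heart of the even case, and the step I expect to be the main obstacle, is the refined reduction (2): I must bring both leaves to $0$ for every labeling \emph{except} the four making up $\kappa_0$ and $\kappa_1$. Following the $\DD_n$ and $\tilde{\DD}_n$ arguments (Sections \ref{subsec:Dn}, \ref{subsec:tilde-Dn}), one sweeps an ``on'' leaf into the neck and shrinks it; the only obstruction is when the neck already occupies the fully alternating pattern $\eta_{k-1}^{n-2} = (0,1,0,1,\dots,0,1)$ while exactly one leaf is on. In that configuration $\T_2$ sees an even neighbour-sum $b_0 + b_1 + b_3$ (the single ``on'' leaf plus $b_3 = 1$) and is frozen, and the alternation then propagates along the neck down to $b_{n-2} = 0$ — this last equality using precisely that $n - 2$ is \emph{even} — which also freezes $\T_{n-1}$, leaving only the ghost toggle $\T_n$ active. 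I would check directly that the frozen labelings are exactly the four elements of $\kappa_0 \cup \kappa_1$, and that for odd $n$ the same alternation forces $b_{n-2} = 1$, so $\T_{n-1}$ stays active and the reduction can be completed; this is exactly why no analogue of $\kappa_0, \kappa_1$ appears in the odd case.

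Granting the claim, $\varphi_*$ identifies $\Img\varphi_*$ with $\Cl(\AA_{n-2})$, and since $\Cls{\AA_{2k-2}} = \lceil (2k-2)/2 \rceil + 1 = k$, I conclude $\Cls{\tilde{\BB}_n} = k + 3 + 2 = k + 5$, the summands $3$ and $2$ accounting for $\{[\ell_1],[\ell_2],[\ell_3]\}$ and $\{\kappa_0,\kappa_1\}$ respectively.
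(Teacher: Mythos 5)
Your proposal is correct and follows essentially the same route as the paper: the same embedding $\varphi_*\colon \Cl(\AA_{n-2})\to\Cl(\tilde\BB_n)$, the same identification of the complement as $\{[\ell_1],[\ell_2],[\ell_3],\kappa_0,\kappa_1\}$ via the Swallowing Lemma and the leaf-clearing reduction, and the same count $k+3+2$. Your explicit parity analysis of why $\T_2$ and $\T_{n-1}$ freeze on $\eta_{k-1}^{n-2}$ with one leaf on (and why this fails for odd $n$) is a slightly more detailed justification of the step the paper dispatches with ``one can see,'' but it is the same argument.
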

\begin{proof}
We construct a map $\varphi : L(\AA_{n-2}) \to L(\tilde{\BB}_n)$ defined by
\[ \aa \longmapsto \bb = \frac{0}{0}\aa \RRR 0 \]
and the quotient map $\varphi_* \Cl(\AA_{n-2}) \to \Cl(\tilde{\BB}_n)$ defined by $\varphi_*([\aa]) = [\varphi(\aa)]$. Clearly, the quotient map is well-defined. It is also injective. Let $\bb' \sim \bb \in \Img \varphi$, then $\bb = \frac{0}{0}\aa \RRR 0$ and $\bb' = \frac{0}{0}\aa' \RRR 0$, and clearly they both have the same number of components, which implies that $\aa$ and $\aa'$ have the same number of components. Theorem \ref{th:basic-An} implies that $\aa \sim \aa'$. We showed that if $\varphi_*([\aa]) = [\bb] = [\bb'] = \varphi_*([\aa'])$ then $[\aa] = [\aa']$.

We now need to compute its image and show that
\[ \Img \varphi_* = \Cl(\tilde{\BB}_n) \smallsetminus \{ [\ell_1],[\ell_2],[\ell_3], \kappa_0 , \kappa_1 \} . \]
Clearly, those fixed labelings are not in the image of $\varphi_*$ and also $\kappa_0, \kappa_1 \notin \Img \varphi_*$. It remains to show that all the other classes are in the image of $\varphi_*$. So we need to show two things:
\begin{enumerate}
\item If $b_n=1$ and $\bb$ is not a fixed labeling, it is equivalent to a labeling $\bb'$ with $b'_n=0$.
\item Every non-fixed labeling $\bb$ which is also not in a semi-fixed orbit, is equivalent to a labeling $\bb'$ with $b'_0=b'_1=0$.
\end{enumerate}

Assertion 1 follows from the Swallowing Lemma (see Section \ref{sec:Bn}) as we need at least 1 component in the neck. If the neck has no components then: if $b_0=b_1=0$ then $\bb = \ell_1$ which we assumed is not the case, if $b_0=b_1=1$ then $\bb = \ell_3$ which we assumed is not the case, if $b_0=0, b_1=1$ we can push the component to the neck and the swallow the ghost's 1.

For assertion 2. If $b_0=b_1=1$ and we have a component in the neck we can do unsplitting and swallow the the 1's in the leafs:
\[ \frac{1}{1}01... \longmapsto \frac{1}{1}11... \longmapsto \frac{0}{0}11... \]
(the case where $b_2=1$ also is trivial). If $b_0=1$ and $b_1=0$ (the other case $b_0=0$ and $b_1=1$ is symmetric) and if $b_2=0$ (if $b_2=1$ the case is trivial) then in order to make the leafs all 0's we need to move the 1 in $b_0$ to the neck. One can see that we cannot do it only if there are $k-1$ components in the neck (the neck is $\eta_{k-1}^{n-2}$), and then $\bb \in \kappa_0$ ($\bb \in \kappa_1$ for the other symmetric case) which we assumed is not the case.

Finally we note that the maximal number of components we can insert in the neck is $k-1$. So we have
\begin{eqnarray*}
\Cls{\tilde{\BB}_n} & = & \Cls{\AA_{n-2}} + 5 = (\left\lceil \frac{n-2}{2} \right\rceil + 1) + 5 \\
 & = & (\left\lceil \frac{2k-2}{2} \right\rceil + 1) + 5 = k-1 + 1 + 5 = k+5 \ .
\end{eqnarray*}
\end{proof}

\begin{corollary}
For $\tilde{\BB}_n$ we have
\begin{equation*}
\Cls{\tilde{\BB}_n} = \begin{cases} k+4 & \mbox{ if  } n=2k+1, \\ k+5 & \mbox{ if  } n=2k . \end{cases}
\end{equation*}
For the odd case $n=2k+1$ the \minreps are
\[ \ell_0, \ell_1, \ell_2, \ell_3, \quad \mbox{ and } \quad \left( \frac{0}{0}\xi_i\RRR0 \right) \mbox{ for } \ i=1,...,k \ . \]
For the even case $n=2k$ the \minreps are
\[ \ell_0, \ell_1, \ell_2, \ell_3, \quad \mbox{ and } \quad \left( \frac{0}{0}\xi_i\RRR0 \right) \mbox{ for } i=1,...,k-1 \quad \mbox{ and } \quad \left( \frac{0}{1}\eta_{k-1}^{n-2}\RRR0 \right) , \ \left( \frac{1}{0}\eta_{k-1}^{n-2}\RRR0 \right) . \]
\end{corollary}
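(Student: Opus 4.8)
The plan is to read the corollary off from the two Propositions just proved, together with Corollary~\ref{cor:An-reps}, which supplies canonical representatives for $\AA_{n-2}$. The cardinality formula needs nothing new: splitting on the parity of $n$ and quoting the odd case ($\Cls{\tilde{\BB}_n}=k+4$) and the even case ($\Cls{\tilde{\BB}_n}=k+5$) yields the displayed two-line formula directly.

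For the minimal representatives I would trace the bijection $\varphi_*$ through the two proofs. In both parities $\varphi_*$ identifies $\Cl(\AA_{n-2})$ with $\Cl(\tilde{\BB}_n)$ minus the exceptional classes, and $\varphi$ sends a labeling $\aa$ of $\AA_{n-2}$ to $\frac{0}{0}\aa\RRR0$. By Corollary~\ref{cor:An-reps} the classes of $\AA_{n-2}$ have canonical representatives $\xi_0,\xi_1,\dots,\xi_r$ with $r=\lceil(n-2)/2\rceil$; here $r=k$ when $n=2k+1$ and $r=k-1$ when $n=2k$. Pushing these forward gives the labelings $\frac{0}{0}\xi_i\RRR0$, and since $\frac{0}{0}\xi_0\RRR0=\ell_0$ the image contributes $\ell_0$ together with $\frac{0}{0}\xi_i\RRR0$ for $i=1,\dots,r$. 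Appending the classes outside the image---the fixed labelings $\ell_1,\ell_2,\ell_3$ in both cases, and additionally the semi-fixed classes $\kappa_0,\kappa_1$ when $n=2k$---reproduces exactly the two lists in the statement, and the counts $(k+1)+3=k+4$ and $k+(3+2)=k+5$ agree with the cardinality formula.

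It then remains to check that each listed labeling is \emph{minimal}, i.e.\ has the least number of $1$'s in its class. For the image classes this follows from minimality in $\AA_{n-2}$: the labeling $\frac{0}{0}\xi_i\RRR0$ carries $i$ ones, its neck has exactly $i$ components, and since on the image $\varphi_*$ preserves the number of neck-components (via Theorem~\ref{th:basic-An} on the $\AA_{n-2}$ side) while the leaf and ghost labels can only add further $1$'s, no equivalent labeling can have fewer than $i$ ones. The labelings $\ell_0,\ell_1,\ell_2,\ell_3$ are singleton classes by Remark~\ref{rem:tilde-Bn-fixed-points}, hence trivially minimal. For the semi-fixed classes, each of $\kappa_0,\kappa_1$ consists of precisely two labelings differing only in the ghost entry; the chosen representatives $\frac{0}{1}\eta_{k-1}^{n-2}\RRR0$ and $\frac{1}{0}\eta_{k-1}^{n-2}\RRR0$ have ghost label $0$, hence one fewer $1$ than their partners, so they are the minimal ones.

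The main obstacle is the minimality argument for the image classes: one must be sure that embedding $\AA_{n-2}$ into the neck of $\tilde{\BB}_n$ does not open up new moves that could push the number of $1$'s below the $\AA_{n-2}$ lower bound. This is exactly what the injectivity half of $\varphi_*$ guarantees---an equivalence in $\tilde{\BB}_n$ between two images forces an equivalence already in $\AA_{n-2}$, so the component count, and hence the minimal number of $1$'s, is the same as in the straight-line case. The remaining bookkeeping of the leaf and ghost contributions in the semi-fixed classes is routine once the two elements of each class are written out.
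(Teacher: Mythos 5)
Your proposal is correct and follows exactly the route the paper intends: the paper states this corollary without a separate proof, as an immediate readoff of the two preceding propositions (the class counts, the image description of $\varphi_*$, and the exceptional classes $[\ell_1],[\ell_2],[\ell_3]$ and, for $n=2k$, $\kappa_0,\kappa_1$) combined with the canonical representatives $\xi_0,\dots,\xi_r$ of $\AA_{n-2}$ from Corollary \ref{cor:An-reps}. Your added minimality check is more than the paper records and is essentially sound, though the claim that no element of $[\frac{0}{0}\xi_i\RRR0]$ has fewer than $i$ ones really rests on the parity-of-components invariant together with injectivity of $\varphi_*$ (an arbitrary element of the class need not lie in the image of $\varphi$, so "leaf and ghost labels only add $1$'s" does not by itself close the argument).
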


\begin{example}
For
\begin{equation*}
\tilde{\BB}_4 = \sxymatrix{ \bc{1} \rline & \bc{2} \dline \rline & \bc{3} \arr & \bc{4} \\
& \bcu{0} &  }
\end{equation*}
we have $n=4, k=2$ and the 7 = 2 + 5 representatives are
\[
\ell_0 = \left( \frac{0}{0}00\RRR0 \right) , \ \left( \ell_1 = \frac{0}{0}00\RRR1 \right) , \ \ell_2 = \left( \frac{1}{1}00\RRR0 \right) , \ \left( \ell_3 = \frac{1}{1}00\RRR1 \right) , \
\]
and
\[ \frac{0}{0}10\RRR0 \ , \quad \frac{0}{1}01\RRR0 \quad \mbox{ and } \quad \frac{1}{0}01\RRR0 \ . \]
\end{example}

\begin{example}
For
\begin{equation*}
\tilde{\BB}_5 = \sxymatrix{ \bc{1} \rline & \bc{2} \dline \rline & \bc{3} \rline & \bc{4} \arr & \bc{5} \\
& \bcu{0} & & }
\end{equation*}
we have $n=5, k=2$ and the 6 = 2 + 4 representatives are
\[
\ell_0 = \frac{0}{0}000\RRR0 , \ \ell_1 = \frac{0}{0}000\RRR1 , \ \ell_2 = \frac{1}{1}000\RRR0 , \ \ell_3 = \frac{1}{1}000\RRR1 , \
\]
and
\[ \frac{0}{0}100\RRR0 \quad \mbox{ and } \quad \frac{0}{0}101\RRR0 \ . \]
\end{example}


\subsection{The diagram $\CC_n$} \label{subsec:Cn}
\expBE

The Dynkin diagram of of type $\CC_n$ with $n \ge 3$ is
\begin{equation*}
\sxymatrix{ \bc{1} \rline & \cdots & \lline \bc{n-1} & \all \bc{n} }
\end{equation*}
where the most right vertex is the longer vertex. The long vertex does not see the short vertices and
therefore is not affected by them (see formula \eqref{eq:move-nonsimple} in section \ref{sec:rules}).
(This Dynking diagram corresponds to the compact ``quaternionic'' group $\Sp_n=\Sp(n)$ with $n \ge 3$.)

We state formally how moves act on $\CC_n$ and what we mean by ``the long vertex does not see the short vertices''.
\begin{enumerate}
\item[(a)] $\T_i^C \left( \a \LLL \vk \right) = \left( \T_i^A(\a) \LLL \vk \right)$ for $i < n-1$.
\item[(b)] $\left( \T_{n-1}^C(\a \LLL \vk) \right)_n = a_{n-2} + a_{n-1} + \vk$ for $i = n-1$.
\item[(c)] $\T_n^C \left( \a \LLL \vk \right) = \left( \a \LLL \T_n(\vk) \right) = \left( \a \LLL \vk \right)$ for $i=n$.
\end{enumerate}
(b) and (c) are derived from \eqref{eq:move-nonsimple} since $\alpha_{n-1}$ is a shorter vertex and $\alpha_n$ (with label $\vk$) is a longer vertex. Thus, the longer vertex's label cannot be changed by any move.

In order to copmute the classes of $\CC_n$ and later for $\tilde{\CC}_n$ we need to define two auxiliary diagrams.
\begin{construction}
We define
\begin{equation}
\AA_m^{(m)} = \sxymatrix{ \bc{1} \rline & \cdots & \lline \bc{m} \rline & \boxone }
\end{equation}
where the {\em boxed 1} means that for the additional $m+1$-th vertex the label 1 cannot be changed by moves (that is, we only have the moves $\T_1,...,\T_m$ in our arsenal). One can easily see (by property (c) above) that the boxed one neighboring vertex $m$ corresponds to the longer vertex $m+1$. Similarly one defines  \begin{equation}
\AA_m^{(1,m)} = \sxymatrix{ \boxone \rline & \bc{1} \rline & \cdots & \lline \bc{m} \rline & \boxone }
\end{equation}
We use the notations $a_1 \cdots a_m \ll \boe$ and $\boe\ll a_1 \cdots a_m \ll \boe$ for the labelings.
\end{construction}

We now calculate the classes of the two auxiliary diagrams.

\begin{lemma} \label{lem:aux-for-Cn}
For $\AA_m^{(m)}$ and $\AA_m^{(1,m)}$ the number of components (including the boxed 1) is invariant under elementary transformation and is unique to each orbit. For $\AA_m^{(1,m)}$ the labeling $\boe\ll1...1\ll\boe$ (all 1's) is a fixed labeling and it is the only labeling with one component.
\end{lemma}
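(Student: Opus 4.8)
The plan is to regard each auxiliary diagram as an \emph{ordinary} path in which the boxed vertices are genuine vertices whose label is frozen at $1$ and whose own move is simply forbidden. Thus I treat $\AA_m^{(m)}$ as the path $\AA_{m+1}$ on vertices $1,\dots,m,m+1$ with $a_{m+1}\equiv1$ and only the moves $\T_1,\dots,\T_m$ permitted, and $\AA_m^{(1,m)}$ as the path $\AA_{m+2}$ on vertices $0,1,\dots,m,m+1$ with $a_0\equiv a_{m+1}\equiv1$ and again only $\T_1,\dots,\T_m$ permitted. In both cases the move $\T_i$ on a vertex $i\in\{1,\dots,m\}$ is exactly the restriction of the corresponding move of the ambient path; the only feature to keep track of is that the boxed neighbour always contributes a $+1$ to the sum in \eqref{eq:moves-simple}.

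First I would settle invariance and the one-component claim, which are immediate. Every $\T_i$ with $1\le i\le m$ acts on a vertex of degree $\le2$ in the ambient path, so by Lemma \ref{lem:invartiant-comp} it preserves the number of components computed in that path, i.e. including the boxed $1$'s; since only these moves are used, the component count is invariant. For $\AA_m^{(1,m)}$ the two boxed vertices are permanently $1$ and lie in a common component exactly when every intermediate label is $1$, so $\boe\ll1...1\ll\boe$ is the unique one-component labeling; a direct check via Remark \ref{rem:elem-move} (each vertex then has two neighbours labelled $1$) shows it is fixed.

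The heart of the proof is completeness, i.e. that the component count $r$ is a \emph{complete} invariant, and here I would use a shrinking argument in the spirit of the Swallowing Lemma \ref{lem:Bn-shortroot-shrink}. The point is that near a boxed end the move behaves as $a_m'=a_m+a_{m-1}+1$ (resp. $a_1'=a_1+a_2+1$), the extra $+1$ coming from the frozen $1$. Consequently, if the component attached to the boxed vertex occupies positions $j,\dots,m,m+1$, then applying $\T_j,\T_{j+1},\dots,\T_m$ in order turns $a_j,\dots,a_m$ into $0$ while leaving the boxed $1$ untouched and the component count unchanged; that is, the boxed component can always be collapsed to the boxed vertex alone. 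After performing this at the single end (for $\AA_m^{(m)}$) or at both ends (for $\AA_m^{(1,m)}$, when $r\ge2$), the labels adjacent to the boxed vertices are $0$. On the remaining interior path the moves $\T_i$ then coincide with the ordinary $\AA$-moves and preserve those boundary zeros, so Theorem \ref{th:basic-An} applies: the interior labeling with $r-1$ (resp. $r-2$) components is equivalent to $\xi_{r-1}^{m-1}$ (resp. $\xi_{r-2}^{m-2}$). Hence every labeling with $r$ components is equivalent to one canonical representative depending only on $r$, and together with invariance this gives that $r$ determines the orbit.

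The hard part will be the bookkeeping in the collapsing step: one must verify that the sequence $\T_j,\dots,\T_m$ really annihilates the boxed component (checking the induction that each intermediate label becomes $0$ because its left neighbour has just been zeroed and its right neighbour is still $1$), that this operation touches neither the boxed label nor the other components, and that the subsequent reduction to the interior path is legitimate — in particular that all the $\AA$-manipulations supplied by Theorem \ref{th:basic-An} can be realized using only $\T_1,\dots,\T_m$ without reactivating a boundary vertex. Once these direct checks are in place, the three assertions of the lemma follow.
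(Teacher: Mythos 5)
Your proof is correct and takes essentially the same route as the paper, whose own proof is the one-line reference ``Follows from Lemma \ref{lem:basic-An} and Theorem \ref{th:basic-An} as in the case of $\AA_n$'': your collapse of the boxed component is precisely the shrinking of Lemma \ref{lem:basic-An}(b) adapted to the frozen endpoint, and the normalization of the interior is the intended application of Theorem \ref{th:basic-An}. The details you flag (the extra $+1$ from the boxed neighbour, the boundary label remaining $0$ once only $\T_1,\dots,\T_{m-1}$ are used) all check out.
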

\begin{proof}
Follows from Lemma \ref{lem:basic-An} and Theorem \ref{th:basic-An} as in the case of $\AA_n$.
\end{proof}

It is easily seen that $\AA_m^{(m)}$ and $\AA_m^{(1)}$ are essentially the same diagram and thus $\Cl(\AA_m^{(m)}) \cong \Cl(\AA_m^{(1)})$.

Lemma \ref{lem:aux-for-Cn} implies:
\begin{proposition} \label{prop:aux-Cn}
For $\AA_m^{(m)}$ the number of classes is
\begin{equation}
\Cls{\AA_m^{(m)}} = \left\lceil \frac{m-1}{2} \right\rceil + 1 = \begin{cases} k+1 & \mbox{ if  } m = 2k+1, \\  k+1 & \mbox{ if  } m = 2k. \end{cases}
\end{equation}
and we can take as minimal representatives for the classes the labelings $\xi_i^{m-1}0\ll\boe$ for $i=0,...,k$ where $k = \left\lceil \frac{m-1}{2} \right\rceil$.
For $\AA_m^{(1,m)}$ the number of classes is
\begin{equation}
\Cls{\AA_m^{(1,m)}} = \left\lceil \frac{m-2}{2} \right\rceil + 2 = \begin{cases} k+1 & \mbox{ if  } m = 2k, \\  k+2 & \mbox{ if  } m = 2k+1. \end{cases}
\end{equation}
and we can take as minimal representatives for the classes the labelings $\boe\ll1..1\ll\boe$ (all 1's) and $\boe\ll0\xi_i^{m-2}0\ll\boe$ for $i=0,...,k$ where $k = \left\lceil \frac{m-2}{2} \right\rceil$.
\end{proposition}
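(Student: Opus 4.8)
The plan is to exploit Lemma~\ref{lem:aux-for-Cn} to the hilt: it already tells us that the number of components (counting the boxed~$1$'s) is invariant under the moves \emph{and} constant on exactly one orbit, so the set of orbits is in canonical bijection with the set of values the quantity ``number of components'' attains on the diagram. Consequently the entire proposition collapses to a counting problem. For each auxiliary diagram I would (i) determine the minimal and maximal possible number of components, (ii) check that every intermediate value is realized, and (iii) verify that the labelings proposed in the statement are minimal representatives carrying exactly the advertised component counts.

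For $\AA_m^{(m)}$ I would first observe that the boxed~$1$ guarantees at least one component, so the minimum is $1$, attained by $\xi_0^{m-1}0\ll\boe$ (all interior labels $0$, leaving only the isolated boxed~$1$). Regarding the diagram as a line of $m+1$ vertices with the last vertex pinned to $1$, the maximal number of components equals $\lceil(m+1)/2\rceil$, which a one-line parity computation rewrites as $\lceil(m-1)/2\rceil+1=k+1$ with $k$ as in the statement. Then for each $i=0,\dots,k$ the labeling $\xi_i^{m-1}0\ll\boe$ has exactly $i+1$ components, because the trailing~$0$ detaches the boxed~$1$ from the block $\xi_i^{m-1}$, and by Definition~\ref{def:xi-form} this labeling is compact and minimal. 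As $i$ ranges over $0,\dots,k$ the component number sweeps $1,\dots,k+1$ with no gaps, giving $\Cls{\AA_m^{(m)}}=k+1$ and the stated split in the parity of $m$.

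For $\AA_m^{(1,m)}$ the same scheme applies, now using the extra input already supplied by Lemma~\ref{lem:aux-for-Cn}: the all-$1$'s labeling $\boe\ll1\cdots1\ll\boe$ is the unique class with a single component, since the two boxed ends can share a component only when every interior vertex is~$1$. Every other labeling therefore has at least two components. Viewing the diagram as a line of $m+2$ vertices with both ends pinned to $1$, the maximal component number is $\lceil(m+2)/2\rceil=\lceil(m-2)/2\rceil+2=k+2$ (with $k$ as in the statement), and for each $i=0,\dots,k$ the labeling $\boe\ll0\xi_i^{m-2}0\ll\boe$ has exactly $i+2$ components, the two flanking $0$'s separating both boxed ends from the block $\xi_i^{m-2}$. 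Adding the lone all-$1$'s class to the $k+1$ classes realizing component numbers $2,\dots,k+2$ yields $\Cls{\AA_m^{(1,m)}}=k+2$, again matching the two parity cases.

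The only points needing genuine care are the two maximality claims and the exact component counts of the representatives. I must confirm that pinning one endpoint (resp.\ both endpoints) to $1$ does not obstruct reaching the free-line maximum $\lceil(m+1)/2\rceil$ (resp.\ $\lceil(m+2)/2\rceil$), and that the flanking $0$'s in the chosen representatives genuinely keep each boxed~$1$ in its own component so that no accidental merging reduces the count. Both amount to routine $\lceil\,\cdot\,\rceil$ bookkeeping across the parities $m=2k$ and $m=2k+1$, so I anticipate no conceptual obstacle beyond this elementary case analysis.
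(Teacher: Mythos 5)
Your proposal is correct and follows essentially the same route as the paper: the paper's own argument (given only as an ``idea of the proof,'' with the formal version sketched via the injections $\aa\mapsto\aa\ll0\ll\boe$ and $\aa\mapsto\boe\ll0\ll\aa\ll0\ll\boe$) likewise reduces everything to Lemma~\ref{lem:aux-for-Cn} and a count of how many components can be packed into the free vertices. Your ceiling bookkeeping for the pinned endpoints and the component counts of the representatives $\xi_i^{m-1}0\ll\boe$ and $\boe\ll0\xi_i^{m-2}0\ll\boe$ checks out in both parities.
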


\begin{proof}[Idea of the proof]
We see that the number of classes is determined by the maximal number of components we can insert in the ``free'' vertices, which are the vertices without a boxed 1 as a neighbor. As we count the components of the boxed 1's it is clear that putting 1 in the $m$-th vertex for $\AA_m^{(m)}$ won't add a new component since it is already adjunct to a 1. For $\AA_m^{(m)}$ we have $m-1$ free vertices and for $\AA_m^{(1,m)}$ we have $m-2$ free vertices. Calculating how many components we can insert is done as in Section \ref{sec:An}.
\end{proof}

\begin{remark}
The outline of the formal proof is to construct the injections $\aa \mapsto \aa\ll0\ll\boe$ and $\aa \mapsto \boe\ll0\ll\aa\ll0\ll\boe$, take their quotient and calculate their images, as we done in previous sections.

Note that Proposition \ref{prop:aux-Cn} is a special case of $\AA_n^{(m)}$ and $\AA_n^{(m,n)}$ with $m=1$, treated in \cite[Sections 7.2 and 9.3]{BE-real}.
\end{remark}

Now we can prove:

\begin{lemma} \label{lem:Cn}
We have a map $\varphi \colon L(\CC_n) \to L(\AA_{n-1}) \sqcup L(\AA_{n-1}^{(n-1)})$ such that the induced map
$$\varphi_* \colon \Cl(\CC_n)  \isoto \Cl(\AA_{n-1}) \sqcup \Cl(\AA_{n-1}^{(n-1)})$$ is a bijection.
\end{lemma}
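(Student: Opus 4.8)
The plan is to exploit the fact that the label $\vk$ at the long vertex $n$ is a complete invariant that is never altered by any move. Indeed, by rule (c) the move $\T_n^C$ fixes $\vk$ (the long vertex does not see its short neighbor across the even edge), and for $i \ne n$ the move $\T_i^C$ changes only vertex $i \ne n$. Hence $L(\CC_n)$ decomposes as a disjoint union of two move-invariant subsets $L_0 = \{(\a \LLL 0)\}$ and $L_1 = \{(\a \LLL 1)\}$, and since $\vk$ is invariant no orbit can meet both. Therefore $\Cl(\CC_n) = \Cl(L_0) \sqcup \Cl(L_1)$, and it suffices to identify each piece separately.

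First I would treat $L_0$. When $\vk = 0$, rule (b) shows that $\T_{n-1}^C$ replaces $a_{n-1}$ by $a_{n-1} + a_{n-2}$, which is exactly the $\AA_{n-1}$-move $\T_{n-1}^A$ at the right endpoint; together with rule (a) this means the restriction of the moves to $L_0$ coincides, under $(\a \LLL 0) \mapsto \a$, with the moves on $L(\AA_{n-1})$. Thus this identification is an isomorphism of puzzles and induces a bijection $\Cl(L_0) \isoto \Cl(\AA_{n-1})$. Next I would treat $L_1$. When $\vk = 1$, rule (b) shows that $\T_{n-1}^C$ replaces $a_{n-1}$ by $a_{n-1} + a_{n-2} + 1$, i.e.\ it behaves as if vertex $n-1$ had an extra neighbor permanently labeled $1$. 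This is precisely the move $\T_{n-1}$ of the auxiliary diagram $\AA_{n-1}^{(n-1)} = (\,a_1 \cdots a_{n-1} \ll \boe\,)$, whose boxed $1$ corresponds exactly to the long vertex via rule (c). Hence the identification $(\a \LLL 1) \mapsto (\a \ll \boe)$ again matches moves with moves and induces a bijection $\Cl(L_1) \isoto \Cl(\AA_{n-1}^{(n-1)})$.

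Assembling the two identifications gives the map $\varphi$ sending $(\a \LLL 0)$ to $\a \in L(\AA_{n-1})$ and $(\a \LLL 1)$ to $(\a \ll \boe) \in L(\AA_{n-1}^{(n-1)})$, and $\varphi_*$ is the disjoint union of the two orbit bijections, hence a bijection $\Cl(\CC_n) \isoto \Cl(\AA_{n-1}) \sqcup \Cl(\AA_{n-1}^{(n-1)})$. The one point that needs genuine care — and which I regard as the main step — is the verification that the $\vk=1$ move $\T_{n-1}^C$ is faithfully modeled by the boxed-$1$ move of $\AA_{n-1}^{(n-1)}$: one must check that the constant contribution $+1$ coming from the long root in rule (b) matches the contribution of the boxed $1$, and that $\T_n^C$ being trivial costs no identification on either side. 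Everything else is bookkeeping already supplied by Theorem \ref{th:basic-An} for $\AA_n$ and by Proposition \ref{prop:aux-Cn} for the auxiliary diagrams, so once the move-by-move correspondence on $L_0$ and $L_1$ is confirmed the lemma follows immediately.
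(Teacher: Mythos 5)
Your proposal is correct and follows essentially the same route as the paper: the paper likewise uses the invariance of the long-vertex label $\vk$ to split $L(\CC_n)$ into the two move-invariant pieces $\{(\a \LLL 0)\}$ and $\{(\a \LLL 1)\}$, and defines the very same $\varphi$ sending these to $L(\AA_{n-1})$ and $L(\AA_{n-1}^{(n-1)})$ respectively. Your explicit move-by-move check that the $\vk=1$ piece is isomorphic as a puzzle to the boxed-$1$ diagram is exactly the point the paper delegates to its Construction and property (c), so there is nothing missing.
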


\begin{proof}
%
%

We construct a bijection $\varphi : L(\CC_n) \longrightarrow L(\AA_{n-1}) \sqcup
L(\AA_{n-1}^{(n-1)})$ as follows: let $\cc = (\a \LLL \vk) \in L(\CC_n)$, if $\vk=0$ we send
$\cc$ to $\a \in L(\AA_{n-1})$ and if $\vk=1$ we send $\cc$ to $(\a \ll \boe ) \in L(\AA_{n-1}^{(n-1)})$.
The inverse map would be sending $\a \in L(\AA_{n-1})$ to $(\a \LLL 0) \in L(\CC_n)$ and
$(\a\ll\boe) \in L(\AA_{n-1}^{(n-1)})$ to $(\a \LLL 1)  \in L(\CC_n)$. We see that both maps are
well-defined.
We obtain an induced map $\varphi_* : \Cl(\CC_n)  \longrightarrow \Cl(\AA_{n-1}) \sqcup
\Cl(\AA_{n-1}^{(n-1)})$ defined by $\varphi_*([\cc]) = [ \varphi(\cc) ]$. We check that the map $\varphi_*$ is well-defined and bijective, and the lemma will follow from the bijectivity of $\varphi_*$.

Clearly we have $\cc \Wsim{C} \cc' \iff \varphi(\cc) \Wsim{A} \varphi(\cc')$.
Indeed $$\cc \sim \cc' \iff (\a \LLL \vk) \sim (\a' \LLL \vk)$$ (as the
label of $\vk$ cannot be changed by elementary transformation $\T_n$ as explained above). If $\vk=0$
we have $$(\a \LLL 0) \sim (\a' \LLL 0) \iff \a \Wsim{A} \a'$$ and if $\vk=1$
we have $$(\a \LLL 1) \sim (\a' \LLL 1) \iff (\a \ll\boe) \sim (\a' \ll\boe) \ . $$

The map $\varphi_*$ is injective: if $\varphi(\cc) \sim \varphi(\cc')$ then either $\a \sim \a'$ and then $\cc \sim \cc'$
for $c_n=0$ or $(\a \ll \boe) \sim (\a' \ll \boe)$ and $\cc \sim \cc'$ for $c_n=1$.

This map is clearly surjective: for $[\a] \in \Orbset(\AA_{n-1})$ we have $\varphi_*([\a \LLL 0]) = [\a]$
and for $[(\a \ll\boe)] \in \Cl(\AA_{n-1}^{(n-1)})$ we have $\varphi_*([\a \LLL 1]) = [(\a \ll \boe]$.
Thus, $\varphi_*$ is a bijection and the lemma follows.
\end{proof}


\begin{corollary} \label{cor:Cn}
For $\CC_n$ \minreps are:
\[ (\xi_0 \LLL 0) \ , \quad (\xi_1 \LLL 0) \ , \quad  \cdots \ , \quad (\xi_r \LLL 0) \]
where $r = \lceil \frac{n-1}{2} \rceil$,
and
\[ (\xi_0 \LLL 1) \ , \quad (\xi_1 \LLL 1) \ , \quad  \cdots \ , \quad (\xi_s \LLL 1) \]
where $s = \lceil \frac{n}{2} \rceil - 1 = \lceil \frac{n-2}{2} \rceil$.
Note that when $n$ is odd, $(1010...10\LLL 1)$ is a fixed labeling, when $n$ is even, $(1010..1\LLL 0)$ is a fixed labeling.
\end{corollary}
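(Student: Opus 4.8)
The plan is to deduce the whole statement from the bijection of Lemma \ref{lem:Cn} together with the minimal representatives already recorded for the two auxiliary diagrams, so almost no new work is needed beyond transporting data across $\varphi_*$. First I would recall that $\varphi_*$ identifies $\Cl(\CC_n)$ with $\Cl(\AA_{n-1}) \sqcup \Cl(\AA_{n-1}^{(n-1)})$, where by construction a class $[(\a \LLL 0)]$ corresponds to $[\a] \in \Cl(\AA_{n-1})$ and a class $[(\a \LLL 1)]$ corresponds to $[(\a \ll \boe)] \in \Cl(\AA_{n-1}^{(n-1)})$. Since the label $\vk$ of the long vertex is invariant under every move (property (c)), the two families $\{[(\a \LLL 0)]\}$ and $\{[(\a \LLL 1)]\}$ are disjoint and together exhaust $\Cl(\CC_n)$, so it suffices to list minimal representatives in each family separately.

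For the family with $\vk = 0$, I would transport the minimal representatives $\xi_0, \ldots, \xi_r$ of $\Cl(\AA_{n-1})$ from Corollary \ref{cor:An-reps}, with $r = \lceil (n-1)/2 \rceil$, back across $\varphi_*$ to obtain $(\xi_0 \LLL 0), \ldots, (\xi_r \LLL 0)$. For the family with $\vk = 1$, I would invoke Proposition \ref{prop:aux-Cn}, whose minimal representatives for $\AA_{n-1}^{(n-1)}$ are $\xi_i^{n-2} 0 \ll \boe$ for $i = 0, \ldots, s$ with $s = \lceil (n-2)/2 \rceil = \lceil n/2 \rceil - 1$; transporting these gives $(\xi_i^{n-2} 0 \LLL 1)$, and here I would record the harmless identification $\xi_i^{n-2} 0 = \xi_i^{n-1}$, valid for all $i \le s$ because the last $1$ sits at position $2i-1 \le n-2$, so position $n-1$ is zero. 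Thus these representatives are exactly $(\xi_i \LLL 1)$.

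The one point requiring care is that the set-theoretic bijection of Lemma \ref{lem:Cn} must be upgraded to a statement about minimality. Here I would use that $\vk$ is a move-invariant, so that the number of $1$'s in any labeling of a fixed class splits as $\vk$ plus the number of $1$'s on the tail: for $\vk = 0$ we have $|(\a \LLL 0)| = |\a|$, so the minimum over $[(\xi_i \LLL 0)]$ equals the minimum over $[\xi_i]$, namely $i$; for $\vk = 1$ the long vertex forces one $1$ into every labeling of the class, and $\varphi$ preserves the number of free tail $1$'s, so the minimum is $i+1$, realized by $(\xi_i \LLL 1)$. This clean splitting of the $1$-count is the crux of the argument.

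Finally I would verify the two remarks about fixed labelings by a direct application of the move rules (a)--(c). For $n$ odd the candidate is the alternating $(1010\cdots10 \LLL 1)$ on the $n-1 = 2k$ tail vertices: every interior tail vertex has two neighbors of equal label so the sum is even, vertex $1$ has a single $0$ neighbor, and the sum governing $\T_{n-1}$ is $a_{n-2} + \vk = 1 + 1 \equiv 0$, while $\T_n$ fixes $\vk$ by (c); hence the labeling is fixed. For $n$ even the same check applied to $(1010\cdots1 \LLL 0)$, whose tail has odd length $n-1$ and ends in $1$, gives $a_{n-2} + \vk = 0 + 0 \equiv 0$ at vertex $n-1$ and even sums elsewhere, so it too is fixed. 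This completes the verification, the main obstacle being the minimality transfer rather than any of the routine checks.
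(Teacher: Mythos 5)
Your proposal is correct and follows essentially the same route as the paper, which simply cites the bijection $\varphi_*$ of Lemma \ref{lem:Cn} together with the representatives from Corollary \ref{cor:An-reps} and Proposition \ref{prop:aux-Cn}. You additionally spell out the minimality transfer (via the invariance of $\vk$ and the splitting of the $1$-count) and the direct verification of the two fixed labelings, details the paper leaves implicit but which do not change the argument.
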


\begin{proof}
Follows from the proof of Lemma \ref{lem:Cn}.
\end{proof}

\begin{corollary}
$\Cls{\CC_n} = n+1$.
\end{corollary}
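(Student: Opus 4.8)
The plan is to read off the cardinality directly from the bijection established in Lemma \ref{lem:Cn}, which identifies $\Cl(\CC_n)$ with the disjoint union $\Cl(\AA_{n-1}) \sqcup \Cl(\AA_{n-1}^{(n-1)})$. Since cardinalities add over disjoint unions, I would write
\[
\Cls{\CC_n} = \Cls{\AA_{n-1}} + \Cls{\AA_{n-1}^{(n-1)}},
\]
and then substitute the two counts already computed. From Corollary \ref{cor:An-reps} we have $\Cls{\AA_{n-1}} = \lceil (n-1)/2 \rceil + 1$, and from Proposition \ref{prop:aux-Cn} we have $\Cls{\AA_{n-1}^{(n-1)}} = \lceil (n-2)/2 \rceil + 1$.

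Adding these gives $\Cls{\CC_n} = \lceil (n-1)/2 \rceil + \lceil (n-2)/2 \rceil + 2$, so the remaining task is purely arithmetic: to verify that $\lceil (n-1)/2 \rceil + \lceil (n-2)/2 \rceil = n - 1$ for every $n$. I would dispatch this by the usual parity split. When $n = 2k$ the two ceilings are $k$ and $k-1$, summing to $2k-1 = n-1$; when $n = 2k+1$ both ceilings equal $k$, summing to $2k = n-1$. In either case the total is $(n-1) + 2 = n + 1$, as claimed.

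There is essentially no obstacle here: the corollary is a bookkeeping consequence of Lemma \ref{lem:Cn}, Corollary \ref{cor:An-reps}, and Proposition \ref{prop:aux-Cn}. Alternatively, one could avoid the ceiling arithmetic entirely by simply counting the explicit minimal representatives listed in Corollary \ref{cor:Cn}: there are $r+1$ labelings of the form $(\xi_i \LLL 0)$ with $r = \lceil (n-1)/2 \rceil$ and $s+1$ of the form $(\xi_i \LLL 1)$ with $s = \lceil (n-2)/2 \rceil$, and $(r+1) + (s+1)$ collapses to $n+1$ by the same parity computation. I would present whichever phrasing is cleaner, noting that both routes rely on the same pair of ceiling evaluations.
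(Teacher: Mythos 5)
Your proposal is correct and follows exactly the paper's own route: apply Lemma \ref{lem:Cn} to get $\Cls{\CC_n} = \Cls{\AA_{n-1}} + \Cls{\AA_{n-1}^{(n-1)}}$, substitute the counts from Corollary \ref{cor:An-reps} and Proposition \ref{prop:aux-Cn}, and finish with the parity split. The paper even notes your alternative (counting the representatives in Corollary \ref{cor:Cn}) as a second route, so nothing is missing.
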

\begin{proof}
From Lemma \ref{lem:Cn} we have
$ \Cls{ \CC_n } = \Cls{ \AA_{n-1} } + \Cls{ \AA_{n-1}^{(n-1)} } $.
Applying the results we have for $\Cls{ \AA_{n-1} }$ and $\Cls{ \AA_{n-1}^{(n-1)} }=\Cls{ \AA_{n-1}^{(1)} }$, we get
\begin{eqnarray*}
\Cls{  \CC_n } & = & \Cls{ \AA_{n-1} } + \Cls{  \AA_{n-1}^{(n-1)} } = \\
& = &
\begin{cases}
((k-1)+2) + ((k-1)+1) = 2k+1 = n+1 & \mbox{ if } n=2k \ , \\
(k+1) + (k+1) = 2k+2 = n+1 & \mbox{ if } n=2k+1 \ .
\end{cases}
\end{eqnarray*}
Note that it follows also from Corollary \ref{cor:Cn}(ii).
\end{proof}

\subsection{The diagram $\tilde{\CC}_n$} \label{subsec:tilde-Cn}

The affine Dynkin diagram of of type $\tilde{\CC}_n$ with $n \ge 3$, denoted $\CC_n^{(1)}$ in \cite[Table 6]{OV}, is
\begin{equation*}
\sxymatrix{ \bc{0} \arr & \bc{1} \rline & \cdots & \lline \bc{n-1} & \all \bc{n} }
\end{equation*}
where the end vertices are the longer vertices. The longer vertices do not see the short vertices and
therefore are not affected by them (see formula \eqref{eq:move-nonsimple} in section \ref{sec:rules}). As before, we introduced a short notations: $c_0 \RRR c_1 \cdots c_{n-1} \LLL c_n$ for a labeling $\cc$.

\begin{lemma} \label{lem:tilde-Cn-invariant-ends}
Vertices 0 and $n$ cannot be changed by moves. That is, $\T_0(\cc) = \cc$ and $\T_n(\cc) = \cc$. This implies that two labelings $\cc$ and $\cc'$ can be equivalent only if $c_0 = c'_0$ and $c_n = c'_n$.
\end{lemma}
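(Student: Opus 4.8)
The plan is to read the action of $\T_0$ and $\T_n$ straight off the move definition \eqref{eq:move-nonsimple} and then observe that none of the remaining moves can reach the two end vertices. First I would fix the local geometry of the diagram: vertex $0$ has a single neighbour, namely vertex $1$, joined to it by a double edge whose arrow points at $1$; thus $1$ is the shorter vertex and $0$ is the longer one. Symmetrically, vertex $n$ has the single neighbour $n-1$, again across a double edge with $n-1$ the shorter vertex. This is precisely the situation of Remark \ref{rem:elem-move-non-simply-laced}: at each end the longer vertex does not see its shorter neighbour.

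Next I would compute $\T_0$ directly. In \eqref{eq:move-nonsimple} the summation for $a'_0$ runs over the neighbours of $0$ \textbf{excluding} shorter neighbours joined by even multiple edges. The only neighbour of $0$ is vertex $1$, which is exactly such an excluded vertex, so the sum is empty and $(\T_0(\cc))_0 = c_0 + 0 = c_0$. Since $\T_0$ alters no other coordinate, $\T_0(\cc) = \cc$. The identical computation at the other end, with $n-1$ the excluded shorter neighbour of $n$, gives $(\T_n(\cc))_n = c_n$ and hence $\T_n(\cc) = \cc$.

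For the invariance assertion I would argue that $c_0$ and $c_n$ are preserved by every single move, and therefore by every finite composition of moves. Indeed $\T_0$ and $\T_n$ act as the identity by the preceding step, while each remaining move $\T_i$ with $1 \le i \le n-1$ changes only its own $i$-th coordinate, and $i \in \{1,\dots,n-1\}$ is distinct from both $0$ and $n$. Consequently, if $\cc' = \T_{i_1}\circ\cdots\circ\T_{i_r}(\cc)$, then $c'_0 = c_0$ and $c'_n = c_n$; so $\cc \sim \cc'$ forces $c_0 = c'_0$ and $c_n = c'_n$.

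I do not expect a genuine obstacle here, as the whole statement is a direct unwinding of \eqref{eq:move-nonsimple}. The only point demanding care is the correct identification of the shorter versus longer vertex at each end, so that the exclusion clause of the move definition genuinely applies and the end coordinates are left untouched; the remaining content is the routine bookkeeping that interior moves never reach the endpoints.
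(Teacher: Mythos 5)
Your proposal is correct and is essentially the paper's argument written out in full: the paper simply says the lemma follows immediately from \eqref{eq:move-nonsimple}, and your computation (the shorter neighbour across the double edge is excluded, so the sum is empty, and interior moves never touch the end coordinates) is exactly that unwinding. No issues.
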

\begin{proof}
Follows immediately from \eqref{eq:move-nonsimple}.
\end{proof}

\begin{lemma}
We have
\begin{eqnarray} \label{eq:tilde-Cn-L-decom}
L(\tilde{\CC}_n) & = & \{ 0 \RRR \aa \LLL 0 \mid \aa \in L(\AA_{n-1}) \} \sqcup  \{ 0 \RRR \aa \LLL 1 \mid \aa \in L(\AA_{n-1}) \} \sqcup \nonumber \\ & & \sqcup  \{ 1 \RRR \aa \LLL 0 \mid \aa \in L(\AA_{n-1}) \} \sqcup \{ 1 \RRR \aa \LLL 1 \mid \aa \in L(\AA_{n-1}) \} \nonumber
\end{eqnarray}
and since
\begin{eqnarray*}
\Cl(\AA_{n-1}) \cong \Cl\{ 0 \RRR \aa \LLL 0 \mid \aa \in L(\AA_{n-1}) \}, \\
\Cl(\AA_{n-1}^{(n-1)}) \cong \Cl\{ 0 \RRR \aa \LLL 1 \mid \aa \in L(\AA_{n-1}) \}, \\
\Cl(\AA_{n-1}^{(1)}) \cong \Cl\{ 1 \RRR \aa \LLL 0 \mid \aa \in L(\AA_{n-1}) \}, \\
\Cl(\AA_{n-1}^{(1,n-1)}) \cong \Cl\{ 1 \RRR \aa \LLL 1 \mid \aa \in L(\AA_{n-1}) \}, \\
\end{eqnarray*}
we get
\begin{equation} \label{eq:tilde-Cn-Or-decom}
\Cl(\tilde{\CC}_n) \cong \Cl(\AA_{n-1}) \sqcup \Cl(\AA_{n-1}^{(n-1)}) \sqcup \Cl(\AA_{n-1}^{(1)}) \sqcup \Cl(\AA_{n-1}^{(1,n-1)})
\end{equation}
\end{lemma}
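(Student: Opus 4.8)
The plan is to exploit Lemma~\ref{lem:tilde-Cn-invariant-ends}, which tells us that the two long end-vertices are frozen: no move alters $c_0$ or $c_n$. Consequently the pair $(c_0,c_n)\in\{0,1\}^2$ is an invariant of each orbit, so $L(\tilde{\CC}_n)$ splits as the disjoint union of the four subsets displayed in the statement, each being a union of whole orbits. Therefore $\Cl(\tilde{\CC}_n)$ is automatically the disjoint union of the four orbit sets $\Cl\{\epsilon_0\RRR\aa\LLL\epsilon_n\}$ as $(\epsilon_0,\epsilon_n)$ ranges over $\{0,1\}^2$, and it remains only to identify each piece with the orbit set of the appropriate auxiliary diagram from Section~\ref{subsec:Cn}.

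For a fixed pair $(\epsilon_0,\epsilon_n)$ I would take the obvious bijection on labelings $(\epsilon_0\RRR\aa\LLL\epsilon_n)\mapsto\aa$, reading the tail $\aa=(c_1,\dots,c_{n-1})\in L(\AA_{n-1})$ as the free vertices of the auxiliary diagram and the frozen endpoint labels as the (possibly absent) boxed $1$'s. The heart of the argument, exactly as in the proof of Lemma~\ref{lem:Cn}, is to check that the moves match under this identification. The interior moves $\T_2,\dots,\T_{n-2}$ act identically in both models, and the frozen moves $\T_0,\T_n$ act as the identity on the subset with no counterpart among $\T_1,\dots,\T_{n-1}$, so they contribute nothing. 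The boundary moves are the only subtle point: by rule~\eqref{eq:move-nonsimple} the short vertex $1$ does see the long vertex $0$, whence $(\T_1(\cc))_1=c_1+\epsilon_0+c_2$, and symmetrically $(\T_{n-1}(\cc))_{n-1}=c_{n-1}+c_{n-2}+\epsilon_n$. When $\epsilon_0=1$ this is precisely the action of $\T_1$ on a diagram carrying a boxed $1$ to the left of vertex~$1$, while when $\epsilon_0=0$ the term drops out and we recover the plain $\AA_{n-1}$ action; likewise for $\epsilon_n$ at the right end.

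It follows that the move set $\{\T_1,\dots,\T_{n-1}\}$ restricted to $\{\epsilon_0\RRR\aa\LLL\epsilon_n\}$ is carried bijectively onto the move set of the corresponding auxiliary diagram, so the two equivalence relations coincide and the labeling bijection descends to a bijection of orbit sets. Matching the four cases to the constructions of Section~\ref{subsec:Cn} gives the four isomorphisms $\Cl(\AA_{n-1})\cong\Cl\{0\RRR\aa\LLL0\}$, $\Cl(\AA_{n-1}^{(n-1)})\cong\Cl\{0\RRR\aa\LLL1\}$, $\Cl(\AA_{n-1}^{(1)})\cong\Cl\{1\RRR\aa\LLL0\}$ and $\Cl(\AA_{n-1}^{(1,n-1)})\cong\Cl\{1\RRR\aa\LLL1\}$, and assembling the four disjoint pieces yields~\eqref{eq:tilde-Cn-Or-decom}. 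I expect no genuine obstacle: the only place any care is needed is confirming that the short boundary vertices $1$ and $n-1$ see their long neighbors with the correct parity, which is immediate from~\eqref{eq:move-nonsimple} and is the same verification already carried out for $\CC_n$.
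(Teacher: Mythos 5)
Your proposal is correct and follows essentially the same route as the paper: both use Lemma~\ref{lem:tilde-Cn-invariant-ends} to split $L(\tilde{\CC}_n)$ into the four orbit-closed pieces indexed by the frozen pair $(c_0,c_n)$, and both identify a long end-vertex carrying label $1$ with a boxed $1$ of the auxiliary diagrams from Section~\ref{subsec:Cn} so that the move sets correspond. Your explicit verification that $(\T_1(\cc))_1=c_1+\epsilon_0+c_2$ matches the boxed-$1$ action is exactly the check the paper delegates to the $\CC_n$ discussion, just spelled out in more detail.
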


\begin{proof}
The 4 bijections in the middle of the lemma are true by construction as one can easily see the labels 1 on the longer vertices $0$ and $n$ are the same as boxed 1's. I.e. they cannot be changed by moves and their neighbors affected by them in the same way. Actually, the fact that the longer vertex with label 1 acts as a boxed 1 is established in the case for $\CC_n$, cf. Section \ref{subsec:Cn}.

Now we define a map
\[
\varphi : L(\tilde{\CC}_n) \longrightarrow L(\AA_{n-1}) \sqcup L(\AA_{n-1}^{(n-1)}) \sqcup L(\AA_{n-1}^{(1)}) \sqcup L(\AA_{n-1}^{(1,n-1)})
\]
as follows:
\begin{itemize}
\item If $\cc = ( 0\RRR\aa\LLL0 )$ then $\varphi(\cc)=\aa \in L(\AA_{n-1})$.
\item If $\cc = ( 0\RRR\aa\LLL1 )$ then $\varphi(\cc)=\aa\ll\boe \in L(\AA_{n-1}^{(n-1)})$.
\item If $\cc = ( 1\RRR\aa\LLL0 )$ then $\varphi(\cc)=\boe\ll\aa \in L(\AA_{n-1}^{(1)})$.
\item If $\cc = ( 1\RRR\aa\LLL1 )$ then $\varphi(\cc)=\boe\ll\aa\ll\boe \in L(\AA_{n-1}^{(1,n-1)})$.
\end{itemize}
This map is clearly a bijection. The quotient map
\[
\varphi_* : \Cl(\tilde{\CC}_n) \longrightarrow \Cl(\AA_{n-1}) \sqcup \Cl(\AA_{n-1}^{(n-1)}) \sqcup \Cl(\AA_{n-1}^{(1)}) \sqcup \Cl(\AA_{n-1}^{(1,n-1)})
\]
defined by $\varphi_*[\cc] = [ \varphi(\cc) ]$ is well-defined: Lemma \ref{lem:tilde-Cn-invariant-ends} ensures us that for each summand in the disjoint union, the passage from labels to classes (i.e. taking the equivalent class) sends $L(-)$ to $\Cl(-)$. The map $\varphi_*$ is a bijection as well (the inverse map is $\varphi_*^{-1}([\cc']) = [\varphi^{-1}(\cc')]$) and the result follows.
\end{proof}

\begin{proposition}
$ \Cls{\tilde{\CC}_n} = 2n+2 = 2 \cdot (n+1) $.

We can take as \minreps the following labelings:
\begin{eqnarray*}
0\RRR\xi_i^{n-1}\LLL0 & \mbox{ for } & i=0,..., \lceil(n-1)/2 \rceil \ , \\
0\RRR\eta_i^{n-2}0\LLL1 & \mbox{ for } & i=0,..., \lceil(n-2)/2 \rceil \ , \\
1\RRR0\xi_i^{n-2}\LLL0 & \mbox{ for } & i=0,..., \lceil(n-2)/2 \rceil \ , \\
1\RRR0\xi_i^{n-3}0\LLL1 & \mbox{ for } & i=0,..., \lceil(n-3)/2 \rceil \ , \\
& \mbox{ and } & 1\RRR1...1\LLL1 \quad (\mbox{the label with all 1's}).
\end{eqnarray*}
\end{proposition}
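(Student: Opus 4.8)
The plan is to read off everything from the decomposition \eqref{eq:tilde-Cn-Or-decom} established in the preceding lemma, so that no fresh equivalence-class analysis is required: it remains only to add four cardinalities and to transport representatives. First I would invoke the bijection
\[
\Cl(\tilde{\CC}_n) \cong \Cl(\AA_{n-1}) \sqcup \Cl(\AA_{n-1}^{(n-1)}) \sqcup \Cl(\AA_{n-1}^{(1)}) \sqcup \Cl(\AA_{n-1}^{(1,n-1)}),
\]
together with the identification $\AA_{n-1}^{(n-1)} \cong \AA_{n-1}^{(1)}$ noted after Lemma \ref{lem:aux-for-Cn}, so that the two middle summands have equal cardinality.

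Next I would substitute the counts already proved. Corollary \ref{cor:An-reps} gives $\Cls{\AA_{n-1}} = \lceil (n-1)/2 \rceil + 1$, while Proposition \ref{prop:aux-Cn} gives $\Cls{\AA_{n-1}^{(1)}} = \Cls{\AA_{n-1}^{(n-1)}} = \lceil (n-2)/2 \rceil + 1$ and $\Cls{\AA_{n-1}^{(1,n-1)}} = \lceil (n-3)/2 \rceil + 2$. Adding the four terms and splitting on parity, for $n = 2k$ one obtains $(k+1) + k + k + (k+1) = 4k + 2$, and for $n = 2k+1$ one obtains $(k+1) + (k+1) + (k+1) + (k+1) = 4k + 4$; both equal $2n + 2$, which settles the cardinality.

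For the minimal representatives I would pull back, through the explicit bijection $\varphi^{-1}$ of the preceding lemma, the canonical representatives of each summand furnished by Corollary \ref{cor:An-reps} and Proposition \ref{prop:aux-Cn}, using that a longer end-vertex carrying the label $1$ behaves exactly as a boxed $1$ (cf.\ Section \ref{subsec:Cn}). This yields the listed forms: the $\xi_i^{n-1}$ of $\AA_{n-1}$ become $0\RRR\xi_i^{n-1}\LLL0$, and the canonical representatives of the singly- and doubly-boxed diagrams become $0\RRR\eta_i^{n-2}0\LLL1$, $1\RRR0\xi_i^{n-2}\LLL0$ and $1\RRR0\xi_i^{n-3}0\LLL1$, where the interposed $0$ keeps each packed component off the vertex adjacent to a boxed (long) end, and where, since $\xi_i$ and $\eta_i$ carry the same number of components, either packing may be used. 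The one subtle point is the class counted by the trailing ``$+2$'' for $\AA_{n-1}^{(1,n-1)}$: by Lemma \ref{lem:aux-for-Cn} the all-ones labeling $\boe\ll1\cdots1\ll\boe$ is the unique one-component fixed labeling there, and it pulls back to $1\RRR1\cdots1\LLL1$, the final representative on the list. Matching the index ranges against the four cardinalities above then confirms the list is exhaustive and irredundant. No step is genuinely hard; the only places demanding attention are the parity bookkeeping in the ceiling functions and the correct attachment of the all-ones labeling to the fixed one-component class of the doubly-boxed diagram.
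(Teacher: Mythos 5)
Your proposal is correct and follows essentially the same route as the paper: it invokes the decomposition $\Cl(\tilde{\CC}_n) \cong \Cl(\AA_{n-1}) \sqcup \Cl(\AA_{n-1}^{(n-1)}) \sqcup \Cl(\AA_{n-1}^{(1)}) \sqcup \Cl(\AA_{n-1}^{(1,n-1)})$ from the preceding lemma, sums the four cardinalities from Corollary \ref{cor:An-reps} and Proposition \ref{prop:aux-Cn} with a parity case split to get $2n+2$, and obtains the representatives by pulling back the canonical ones through $\varphi^{-1}$. Your extra remarks on the interchangeability of $\xi_i$ and $\eta_i$ and on the all-ones labeling accounting for the ``$+2$'' in the doubly-boxed summand are accurate and merely make explicit what the paper leaves implicit.
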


\begin{proof}
From the above lemma and Proposition \ref{prop:aux-Cn} we have
\begin{eqnarray*}
\Cls{\tilde{\CC}_n} & = & \Cls{\AA_{n-1}} + \Cls{\AA_{n-1}^{(n-1)}} + \Cls{\AA_{n-1}^{(1)}} + \Cls{\AA_{n-1}^{(1,n-1)}} \\
& = & \left( \left\lceil \frac{n-1}{2} \right\rceil + 1 \right) + \left( \left\lceil \frac{n-2}{2} \right\rceil + 1 \right) + \left( \left\lceil \frac{n-2}{2} \right\rceil + 1 \right) + \left( \left\lceil \frac{n-3}{2} \right\rceil + 2 \right)
\end{eqnarray*}
It is easiest to check for odd and even $n$ separately.
\begin{itemize}
\item \textbf{For $n = 2k+1$:} then $n-1 = 2k$, $(n-1)-1 = n-2 = 2k-1$, $(n-1)-2 = n-3 = 2k-2 = 2(k-1)$ and
\begin{eqnarray*}
\Cls{\tilde{\CC}_n} & = & (\frac{2k}{2}+1) + (\frac{2k}{2}+1) + (\frac{2k}{2}+1) + (\frac{2k-2}{2}+2) \\ & = & 3k+3 + k-1 + 2 = 4k + 4 = 2(2k+1)+2 = 2n+2 = 2(n+1)
\end{eqnarray*}
\item \textbf{For $n = 2k$:} then $n-1 = 2k-1$, $(n-1)-1 = n-2 = 2k-2=2(k-1)$, $(n-1)-2 = n-3 = 2k-3 = 2(k-1)-1$ and
\begin{eqnarray*}
\Cls{\tilde{\CC}_n} & = & (\frac{2k}{2}+1) + (\frac{2k-2}{2}+1) + (\frac{2k-2}{2}+1) + (\frac{2k-2}{2}+2) \\ & = & 3k+1 + k-1 + 2 = 4k + 2 = 2(2k)+2 = 2n+2 = 2(n+1)
\end{eqnarray*}
\end{itemize}
So in both cases we got that the number of classes is $2n+2 = 2 \cdot (n+1)$.
\end{proof}

\subsection{Diagrams of type $\DD_n$} \label{subsec:Dn}
\label{sec:Dn}

\expBE

The Dynkin diagram of type $\DD_n$ with $n \ge 4$ is
\begin{equation*}
\sxymatrix{ \bc{1} \rline & \cdots \rline & \bc{n-3} \rline & \bc{n-2} \dline \rline & \bc{n-1} \\
& & & \bcu{n} & }
\end{equation*}
(this Dynkin diagram corresponds to the compact group $\Spin(2n)$ which is the universal cover of the special orthogonal group $\SO(2n)$).
This diagram has a vertex of degree 3, the vertex $n-2$. Throughout Section \ref{sec:Dn} we assume $n \ge 4$.

\begin{notation}
For brevity we introduce the following short notation:
\[
d_1 ... d_{n-2} \frac{d_{n-1}}{d_n} :=  \left( \sxymatrix{ d_1 \rline & \cdots & d_{n-2} \lline \dline \rline & d_{n-1} \\ & & d_n & } \right) \ .
\]
\end{notation}

\begin{remark}
Let us note that other than $\ell_0^{(0)} = \xi_0 \frac{0}{0} = 0...0\frac{0}{0}$ we have the following fixed labelings:
\begin{itemize}
\item We always have the fixed labeling $\ell_2^{(0)} = 0...0\frac{1}{1} = \xi_0^{n-2}\frac{1}{1}$.
\item If $n=2k$ we have another 2 fixed labelings
$$  \ell_1^{(0)} = 101...010\frac{1}{0} = \xi_{k-1}^{n-2}\frac{1}{0} \ , \quad\ell_3^{(0)} = 101...010\frac{0}{1} = \xi_{k-1}^{n-2}\frac{0}{1} \ . $$
\end{itemize}
Each fixed labeling is its own orbit, that is $[\ell_i^{(0)}] = \{ \ell_i^{(0)} \} \in \Or(\DD_n)$.
\end{remark}

\begin{lemma} \label{lem:Dn-basics2}
For $\DD_n$ with $n \ge 4$, if a labeling $\dd$ of $\DD_n$ is not a fixed labeling then $\dd$ can be changed by moves to $\dd'$ such that $d'_{n-1}=d'_n=0$.
\end{lemma}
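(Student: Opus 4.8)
The plan is to reduce everything to the behaviour of the three moves touching the branch vertex $n-2$. Writing the fork labels as $d_{n-2}$ (the degree-$3$ vertex), $d_{n-1},d_n$ (the two tips) and $d_{n-3}$ (the tail neighbour), the relevant moves are $\T_{n-1}\colon d_{n-1}\mapsto d_{n-1}+d_{n-2}$, $\T_n\colon d_n\mapsto d_n+d_{n-2}$, and $\T_{n-2}\colon d_{n-2}\mapsto d_{n-2}+d_{n-3}+d_{n-1}+d_n$. The crucial observation is that $\T_{n-1}$ and $\T_n$ flip $d_{n-1}$, resp.\ $d_n$, precisely when $d_{n-2}=1$, and neither move disturbs $d_{n-2}$ or the other tip. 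Hence if $d_{n-2}=1$ we finish immediately: apply $\T_{n-1}$ if $d_{n-1}=1$ and $\T_n$ if $d_n=1$, obtaining $d'_{n-1}=d'_n=0$. So the whole problem lives in the case $d_{n-2}=0$, where the tips are frozen under $\T_{n-1},\T_n$.

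When $d_{n-2}=0$ the move $\T_{n-2}$ turns $d_{n-2}$ from $0$ into $1$ exactly when $d_{n-3}+d_{n-1}+d_n$ is odd; after that single move we are back in the solved case. Moreover, as long as we keep $d_{n-2}=0$, the moves $\T_1,\dots,\T_{n-3}$ act on the tail $d_1\cdots d_{n-3}$ exactly as the $\AA_{n-3}$-moves (the move $\T_{n-3}$ sees $d_{n-2}=0$ as an ordinary $0$-endpoint) and leave $d_{n-2},d_{n-1},d_n$ untouched. By Theorem~\ref{th:basic-An} this lets me reshape the tail into any labeling with the same number of components. The strategy is therefore: reshape the tail so that $d_{n-3}$ acquires the parity making $d_{n-3}+d_{n-1}+d_n$ odd, fire $\T_{n-2}$ to set $d_{n-2}=1$, and clear the tips as in the first paragraph.

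It remains to carry out the case analysis for $d_{n-2}=0$ with at least one tip equal to $1$. If $d_{n-3}+d_{n-1}+d_n$ is already odd, fire $\T_{n-2}$ at once. If it is even, then $(d_{n-3},d_{n-1},d_n)\in\{(0,1,1),(1,1,0),(1,0,1)\}$. In the case $(0,1,1)$ I reshape the tail to make $d_{n-3}=1$; this is possible whenever the tail carries at least one $1$ (push a component against vertex $n-3$), the sole exception being the empty tail, which together with the fork $\tfrac{1}{1}$ is the fixed labeling $\ell_2^{(0)}$. In the cases $(1,1,0)$ and $(1,0,1)$ I instead reshape the tail to make $d_{n-3}=0$; this is possible unless every labeling with the given number of tail components is forced to end in $1$, i.e.\ unless the tail is the fully packed alternating string, which together with the fork $\tfrac{1}{0}$ (resp.\ $\tfrac{0}{1}$) is exactly $\ell_1^{(0)}$ (resp.\ $\ell_3^{(0)}$), occurring only for $n$ even. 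In every non-exceptional subcase the reshaping makes $d_{n-3}+d_{n-1}+d_n$ odd, and one application of $\T_{n-2}$ followed by the tip-clearing finishes the argument.

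The one delicate point---and the only place the hypothesis is used---is the even-parity branch: there the tips cannot move and $\T_{n-2}$ is inert, so progress depends entirely on whether the tail can be reshaped to flip the value of $d_{n-3}$. The content of the lemma is precisely that the three configurations where this reshaping is blocked are the three non-trivial fixed labelings $\ell_1^{(0)},\ell_2^{(0)},\ell_3^{(0)}$. Thus I expect the main work to be the bookkeeping that a tail with a prescribed number of components can always be reshaped to realize a prescribed value of $d_{n-3}$ except in the maximally packed configuration, which is a direct consequence of the $\AA_{n-3}$ classification in Theorem~\ref{th:basic-An}.
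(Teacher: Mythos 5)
Your proposal is correct and follows essentially the same route as the paper's proof: clear the tips when $d_{n-2}=1$, and otherwise use the $\AA$-type classification (Theorem \ref{th:basic-An}) to reshape the tail so that a component can be pushed onto, or pulled off, the neighbour of the branch vertex, firing $\T_{n-2}$ once the parity at vertex $n-2$ permits, with the obstructed configurations being exactly $\ell_1^{(0)},\ell_2^{(0)},\ell_3^{(0)}$. Your version is somewhat more explicit than the paper's (which compresses the $(1,0)$ and $(0,1)$ fork cases into one sentence), but the decomposition and the key mechanism are identical.
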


\begin{proof}

Let $\dd = \aa \frac{\vk}{\lambda}$ with $\aa \in L(\AA_{n-2})$ and $\vk, \lambda \in \{0,1\}$. We show that either $\dd \sim \dd' = \aa'\frac{0}{0}$ or $\dd$ is a fixed labeling.

If $d_{n-2}=1$ the clearly $(...1\frac{\vk}{\lambda}) \sim (...1\frac{0}{0})$ by applying $\T_n$ if $\lambda=1$ and $\T_{n-1}$ if $\vk=1$. Now suppose $d_{n-2} = 0$. If $\vk=\lambda=0$ we have nothing to prove. Assume $\vk=\lambda=1$. Then $\dd = (...0\frac{1}{1})$. If $\aa \ne 0$ then there is at least one component, we push it right to get $(...0\frac{1}{1}) \sim (...10\frac{1}{1})$ and then unsplit by applying $\T_{n-2}$ and get $(...10\frac{1}{1}) \sim (...11\frac{1}{1}) \sim (...11\frac{0}{0})$. Otherwise, if $\aa = 0$ then $\dd = \ell_2^{(0)}$.
Assume $\vk=1$ and $\lambda=0$ (the other case is treated similarly). Then $\dd = (...0\frac{1}{0})$. To change $d_{n-1}=\vk=1$ to 0 we must push it to vertex $n-2$. This cannot be done only if $n=2k$ is even and $\aa = \xi_{k-1}^{n-2} = (10...10)$. Then $\dd = (10...10\frac{1}{0}) = \ell_1^{(0)}$.

The lemma is proved.
\end{proof}


\begin{proposition} \label{lem:orbits-Dn} \label{prop:orbits-Dn}
For $\DD_n$ with $n \ge 4$ we construct a map
\[
\varphi : \begin{array}{rcl} L(\AA_{n-2}) & \longrightarrow & L(\DD_n) \\
\a & \longmapsto & \a \frac{0}{0}
\end{array}
\]
Then the induces map $\varphi_* : \Or(\AA_{n-2}) \to \Or(\DD_n)$ defined by $\varphi_*([\a]) = [ \varphi(\a)]$ is well-defined and injective. Moreover,
\begin{enumerate}
\item[(i)] In the case $n=2k$, when there are 3 fixed labelings, we have
\[ \im \varphi_* =  \Or(\DD_n) \smallsetminus \{
[\ell_2^{(0)}] , \ [\ell_1^{(0)}], \ [\ell_3^{(0)}] \} = \Omega_e \ .
\]
\item[(ii)] In the case $n=2k+1$, we have
\[ \im \varphi_* = \Or( \DD_n ) \smallsetminus \{ [ \ell_2^{(0)} ]\} = \Omega_o \ . \]
\end{enumerate}

%
\end{proposition}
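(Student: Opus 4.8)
The plan is to prove the three assertions in turn: that $\varphi_*$ is well defined, that it is injective, and that its image is the stated complement of fixed classes. Well-definedness is routine. If $i\le n-3$ then the $\DD_n$-move $\T_i$ acts on $\aa\frac{0}{0}$ exactly as the $\AA_{n-2}$-move $\T_i^A$ acts on $\aa$, since the neighbours of $i$ lie in the tail; and the fork move $\T_{n-2}$ applied to $\aa\frac{0}{0}$ gives $d'_{n-2}=d_{n-2}+d_{n-3}+d_{n-1}+d_n=d_{n-2}+d_{n-3}$ because both leaves vanish, so it too agrees with $\T_{n-2}^A$ on $\aa$ and keeps the form $(\,\cdot\,)\frac{0}{0}$. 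Hence $\aa\sim_A\aa'$ forces $\varphi(\aa)\sim_D\varphi(\aa')$, so $\varphi_*$ descends to classes.

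For the image I would invoke Lemma \ref{lem:Dn-basics2} directly: every non-fixed labeling is equivalent to some $\dd'$ with $d'_{n-1}=d'_n=0$, i.e.\ to $\varphi(\aa'')$ for the tail $\aa''$ of $\dd'$; thus every non-fixed class lies in $\im\varphi_*$. Conversely each fixed labeling is its own orbit, so a fixed class meets $\im\varphi_*$ only when the fixed labeling itself has the shape $(\,\cdot\,)\frac{0}{0}$. Among the fixed labelings recorded before the proposition, only $\ell_0^{(0)}=\xi_0\frac{0}{0}=\varphi(0)$ is of this shape, whereas $\ell_2^{(0)}=\xi_0\frac{1}{1}$ (always) and $\ell_1^{(0)}=\xi_{k-1}\frac{1}{0}$, $\ell_3^{(0)}=\xi_{k-1}\frac{0}{1}$ (when $n=2k$) each carry a $1$ on a leaf and so are excluded. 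This gives $\im\varphi_*=\Or(\DD_n)\smallsetminus\{[\ell_2^{(0)}]\}$ in the odd case (ii) and $\im\varphi_*=\Or(\DD_n)\smallsetminus\{[\ell_2^{(0)}],[\ell_1^{(0)}],[\ell_3^{(0)}]\}$ in the even case (i).

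Injectivity is the one real obstacle, and I expect it to require genuine work. I need $\varphi(\aa)\sim_D\varphi(\aa')\Rightarrow\aa\sim_A\aa'$, and by Theorem \ref{th:basic-An} it suffices to show $\aa,\aa'$ have the same number of components. The naive hope that ``the number of components is preserved'' fails: splitting at the degree-$3$ vertex $n-2$ changes the count by $\pm2$ (exactly the process in \eqref{eq:unsplit-scheme}--\eqref{eq:split-scheme}), and Lemma \ref{lem:vertex3} secures only its parity; since the attainable counts $0,1,\dots,\lceil(n-2)/2\rceil$ contain pairs of equal parity (e.g.\ $\xi_1$ and $\xi_3$ once $n\ge 8$), parity cannot separate them. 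My remedy is to promote the count to a true invariant. Writing $c(\dd)$ for the number of components, I claim
\[
\Phi(\dd)=c(\dd)-2\,d_{n-1}d_n\,(1-d_{n-2})\in\Z
\]
is invariant under every move. For the tail moves $\T_i$ ($i\le n-3$) and the leaf moves $\T_{n-1},\T_n$ this is clear: each acts at a vertex of degree $\le 2$, so $c$ is unchanged by Lemma \ref{lem:invartiant-comp}, and the correction term survives (tail moves fix $d_{n-2},d_{n-1},d_n$, while a leaf can be flipped only when $d_{n-2}=1$, where the factor $1-d_{n-2}$ already kills the term).

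The substance is the fork move $\T_{n-2}$, which flips $d_{n-2}$ precisely when $|S|:=d_{n-3}+d_{n-1}+d_n$ is odd; because the three neighbours lie in distinct components of the tree, $c$ then changes by $\mp(|S|-1)$, i.e.\ by $0$ when $|S|=1$ and by $\mp2$ when $|S|=3$. A short case check on $(d_{n-3},d_{n-1},d_n)$ shows the correction changes by exactly the opposite amount: it is nonzero only when $d_{n-1}=d_n=1$, which under $|S|$ odd forces $|S|=3$, that is the split/unsplit where $c$ jumps by $\pm2$ and the correction $-2d_{n-1}d_n(1-d_{n-2})$ absorbs it. Granting this, $\Phi$ is constant on orbits; and since $\Phi(\aa\frac{0}{0})=c(\aa)$ is the number of components of $\aa$ in $\AA_{n-2}$, equivalence of $\varphi(\aa)$ and $\varphi(\aa')$ forces $c(\aa)=c(\aa')$, hence $\aa\sim_A\aa'$. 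I anticipate that the move-invariance of $\Phi$ at the fork—bookkeeping how $c$ and the correction cancel in each parity case of $(d_{n-3},d_{n-1},d_n)$—is the delicate step, with the rest being formal.
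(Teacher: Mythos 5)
Your proposal is correct, and on the one genuinely delicate point it does more than the paper does. The well-definedness argument and the computation of the image are essentially identical to the paper's: both reduce well-definedness to the observation that on labelings of the form $\aa\frac{0}{0}$ the fork move $\T_{n-2}$ agrees with $\T_{n-2}^A$, and both obtain the image by combining Lemma \ref{lem:Dn-basics2} (every non-fixed labeling can be driven to one with $d_{n-1}=d_n=0$) with the fact that each fixed labeling is a singleton orbit not of the shape $(\,\cdot\,)\frac{0}{0}$ except $\ell_0^{(0)}$. The divergence is at injectivity: the paper simply asserts that $\aa\frac{0}{0}\sim\aa'\frac{0}{0}$ forces $\aa$ and $\aa'$ to have the same number of components, offering no justification at precisely the point where the component count is \emph{not} preserved by individual moves (splitting at the degree-$3$ vertex changes it by $\pm2$, and Lemma \ref{lem:vertex3} only controls parity, which does not separate, say, $\xi_1\frac{0}{0}$ from $\xi_3\frac{0}{0}$ once $n\ge 8$). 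Your remedy --- the corrected quantity $\Phi(\dd)=c(\dd)-2\,d_{n-1}d_n(1-d_{n-2})$ --- genuinely closes this gap: I checked the fork case, and indeed the correction term is nonzero exactly when $d_{n-1}=d_n=1$ and $d_{n-2}=0$, which is precisely the configuration entered or left by a true split/unsplit, so the $\pm2$ jump in $c$ is absorbed; tail and leaf moves leave both terms alone. Since $\Phi$ restricts to the component count on $\im\varphi$, injectivity follows via Theorem \ref{th:basic-An}. What your approach buys is a complete, checkable proof of the step the paper compresses to a single unsupported clause; the cost is only the small case analysis at the fork, which you have correctly identified and which does go through.
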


\begin{proof}
The map $\varphi_*$ is well-defined. For $i=1,...,n-3$ we have $\T_i^D = \T_i^A$. For $i=n-2$ we have for $\Img \varphi$ that $\T_{n-2}^D = \T_{n-2}^A$ since $d_n = d_{n+1} = 0$. Clearly, if $\aa \Wsim{A} \aa'$ then $\varphi(\aa) \Wsim{D|_A} \varphi(\aa')$ where $\Wsim{D|_A}$ mean we consider only the move $\T_i^D = \T_i^A , \ i=1,...,n-2$, and this implies $\varphi(\aa) \Wsim{D} \varphi(\aa')$.



The map $\varphi_*$ is also injective. Indeed, suppose $\varphi(\aa) \sim \varphi(\aa')$, then $\aa\frac{0}{0} \sim \aa'\frac{0}{0}$, then $\aa'$ and $\aa$ have the same number of components, which by Theorem \ref{th:basic-An} implies $\aa \sim \aa'$. We showed that if $\varphi_*([\aa]) = \varphi_*([\aa'])$ then $[\aa] = [\aa']$.

Now we prove the assertion about the images. Clearly then $\ell_i^{(0)} \notin \im \varphi$ for $i=1,2,3$ as $d_{n-1}=1$ or $d_n=1$. Since these are fixed labelings each of their orbits contains only one element and is not equivalent to any other labeling. In particular, it is not equivalent to any labeling in the image of $\varphi$. Thus, $[\ell_i^{(0)}] \notin \im \varphi_*$. By Lemma \ref{lem:Dn-basics2}, if $\dd$ is not a fixed labeling then $\dd \sim \aa\frac{0}{0}$ and thus $[\dd] = \varphi_*([\aa])$.

The proposition is proved.
\end{proof}

\begin{corollary} \label{cor:Dn-reps}
\Zerois just the labeling $0 = \ell_0^{(0)} = \xi_0^{n-2}\frac{0}{0}$.
\Reps are:
\begin{itemize}
\item For $n=2k+1$  we can take the following representatives
\[
\xi_0^{n-2} \frac{0}{0} = 0...0\frac{0}{0} \ , \quad \xi_1^{n-2} \frac{0}{0} = 10...0\frac{0}{0} \
, \quad ... , \quad \xi_{k-1}^{n-2}  \frac{0}{0} = 10...100\frac{0}{0} \ ,  \quad \xi_{k}^{n-2}
\frac{0}{0} = 101..01\frac{0}{0}
\]
and the fixed labeling $\ell_2^{(0)} = \xi_0^{n-2} \frac{1}{1} = 0...0\frac{1}{1}$.
\item For $n=2k$ we can take the following $k$ representatives
\[
\xi_0^{n-2} \frac{0}{0} = 0...0\frac{0}{0} \ , \quad \xi_1^{n-2} \frac{0}{0} = 10..0\frac{0}{0} \ ,
\quad ... , \quad \xi_{k-1}^{n-2} \frac{0}{0} = 10..10\frac{0}{0} \ ,
\]
and the 3 fixed labelings (each constitutes an orbit):
\[
\quad \ell_1^{(0)} = \xi_{k-1}^{n-2} \frac{1}{0} = 10..10\frac{1}{0} \ , \quad \ell_3^{(0)} = \xi_{k-1}^{n-2} \frac{0}{1} =
10..10\frac{0}{1} \ , \quad \ell_2^{(0)} = \xi_0^{n-2} \frac{1}{1} = 0..0\frac{1}{1} \ .
\]
\end{itemize}
In all cases $\xi_i \in L(\AA_{n-2})$ (i.e. it includes $n-2$ vertices) for all $0 \le i \le k-1$.
\end{corollary}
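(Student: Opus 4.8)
The plan is to read off the representatives directly from Proposition \ref{prop:orbits-Dn} together with the known description of $\Or(\AA_{n-2})$; this statement is essentially a bookkeeping corollary. First I would dispose of the class of zero: the all-zeros labeling $\xi_0^{n-2}\frac{0}{0} = \ell_0^{(0)}$ is a fixed labeling (every move returns $a'_i = a_i + \sum_k a_k = 0$), so its orbit is the singleton $\{0\}$, which gives the first assertion.

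Next I would invoke Corollary \ref{cor:An-reps}: the orbits of $\AA_{n-2}$ are represented by the canonical labelings $\xi_0^{n-2}, \xi_1^{n-2}, \ldots, \xi_r^{n-2}$ with $r = \lceil (n-2)/2 \rceil$, each of which is minimal, having the fewest $1$'s (namely $i$ of them for $\xi_i$) among labelings with that number of components. Proposition \ref{prop:orbits-Dn} states that $\varphi_*$ is injective with image equal to $\Or(\DD_n)$ with the fixed labelings removed. Hence the labelings $\varphi(\xi_i^{n-2}) = \xi_i^{n-2}\frac{0}{0}$ represent all non-fixed orbits of $\DD_n$, and I would then adjoin the fixed labelings (each alone in its orbit, hence trivially its own minimal representative) to complete the list. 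Splitting into the two parity cases and substituting $r = k$ when $n = 2k+1$ and $r = k-1$ when $n = 2k$ produces precisely the two lists displayed in the statement.

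Finally I would confirm minimality. Since $\varphi$ places $0$'s on the two branch vertices $n-1, n$ and adds no $1$'s, while the injectivity argument of Proposition \ref{prop:orbits-Dn} shows $\varphi_*$ preserves the number of components, each $\xi_i^{n-2}\frac{0}{0}$ keeps the minimal number of $1$'s within its $\DD_n$-orbit. The only point demanding any attention is matching the fixed labelings against the parity of $n$: for odd $n$ only $\ell_2^{(0)}$ lies outside the image, whereas for even $n$ the branch can additionally carry the alternating pattern $\xi_{k-1}^{n-2}$, producing the two extra fixed labelings $\ell_1^{(0)}$ and $\ell_3^{(0)}$. This is exactly the case distinction already carried out in Lemma \ref{lem:Dn-basics2} and Proposition \ref{prop:orbits-Dn}, so no genuine obstacle remains—the corollary is obtained by transporting representatives through $\varphi$ and appending the fixed labelings.
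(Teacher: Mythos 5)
Your proposal is correct and follows the same route as the paper, which proves this corollary simply by citing Proposition \ref{prop:orbits-Dn}; you have merely spelled out the bookkeeping (transporting the canonical $\AA_{n-2}$ representatives through $\varphi_*$, adjoining the fixed labelings, and splitting by the parity of $n$) that the paper leaves implicit.
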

\begin{proof}
Follows from Proposition \ref{prop:orbits-Dn}.
\end{proof}

\begin{example}
For $\DD_5$ we have 4 representatives of orbits
\[ 000\frac{0}{0} \ , \quad 100\frac{0}{0} \ , \quad 101\frac{0}{0} , \quad 000\frac{1}{1} \ . \]
For $\DD_6$ we have 6 representatives of orbits
\[
0000\frac{0}{0} \ , \quad 1000\frac{0}{0} \ , \quad 1010\frac{0}{0} \ , \quad 1010\frac{1}{0} \ ,
\quad 1010\frac{0}{1} \ , \quad 0000\frac{1}{1} \ .
\]
\end{example}

\begin{corollary}
For $\DD_n$:
\begin{equation}
\Orbs{\DD_n} = \begin{cases} k+3 & \mbox{ if } n = 2k \ , \\ k + 2 & \mbox{ if } n =
2k+1 \ .
\end{cases}
\end{equation}
\end{corollary}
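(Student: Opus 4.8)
The plan is to read the count directly off Proposition~\ref{prop:orbits-Dn}, so that the corollary reduces to pure bookkeeping; all the substantive combinatorics has already been carried out in Lemma~\ref{lem:Dn-basics2} and Proposition~\ref{prop:orbits-Dn}. First I would note that, since $\varphi_*$ is injective by that proposition, it restricts to a bijection $\Or(\AA_{n-2}) \isoto \im\varphi_*$; hence
\[
\Orbs{\DD_n} = \Orbs{\AA_{n-2}} + \#\bigl(\Or(\DD_n)\smallsetminus \im\varphi_*\bigr),
\]
and the second summand is exactly the number of fixed-labeling classes that Proposition~\ref{prop:orbits-Dn} excludes from the image of $\varphi_*$.

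Next I would evaluate that complement by parity using the two cases of the proposition. For $n = 2k$, part (i) gives $\Or(\DD_n)\smallsetminus\im\varphi_* = \{[\ell_2^{(0)}],[\ell_1^{(0)}],[\ell_3^{(0)}]\}$, three distinct singleton classes, so the complement has cardinality $3$. For $n = 2k+1$, part (ii) gives $\Or(\DD_n)\smallsetminus\im\varphi_* = \{[\ell_2^{(0)}]\}$, of cardinality $1$.

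Finally I would substitute the value $\Orbs{\AA_{n-2}} = \lceil (n-2)/2\rceil + 1$ from Corollary~\ref{cor:An-reps} and compute. For $n = 2k$ we have $n-2 = 2(k-1)$, so $\Orbs{\AA_{n-2}} = (k-1)+1 = k$ and therefore $\Orbs{\DD_n} = k + 3$; for $n = 2k+1$ we have $n-2 = 2k-1$, so $\lceil(2k-1)/2\rceil = k$, giving $\Orbs{\AA_{n-2}} = k+1$ and $\Orbs{\DD_n} = (k+1)+1 = k+2$. This matches the asserted formula.

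There is no real obstacle here, since the content lives entirely in the earlier results. The only point that deserves a line of care is verifying that the excluded classes are genuinely distinct, so that the complement really has size $3$ (resp.\ $1$) rather than fewer; this is immediate from their explicit descriptions as fixed labelings with distinct entries in the fork $\frac{d_{n-1}}{d_n}$, each of which forms its own singleton orbit. As an independent sanity check I would recount the explicit minimal representatives listed in Corollary~\ref{cor:Dn-reps}, which yields the same totals $k+2$ and $k+3$.
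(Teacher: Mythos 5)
Your proposal is correct and follows essentially the same route as the paper: both deduce $\Orbs{\DD_n} = \Orbs{\AA_{n-2}} + 3$ (resp.\ $+1$) from Proposition \ref{prop:orbits-Dn} and then substitute $\Orbs{\AA_{n-2}}$ from the $\AA_n$ count. Your extra remark that the excluded fixed-labeling classes are pairwise distinct singletons is a reasonable point of care, though the paper treats it as implicit.
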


\begin{proof}
From Proposition \ref{prop:orbits-Dn} we have
\begin{equation*}
\# H^1(\R,\DD_n)=\Orbs{\DD_n} = \begin{cases} \Orbs{\AA_{n-2}} + 3 & \mbox{ if } \ n = 2k \ , \\
\Orbs{\AA_{n-2}} + 1 & \mbox{ if } \ n = 2k+1 \ . \end{cases}
\end{equation*}
From Section \ref{sec:An} we have
\[ \Orbs{\AA_{2k-2}} = \Orbs{\AA_{2(k-1)}} = k-1 + 1 = k \]
and
\[ \Orbs{\AA_{2k+1-2}} = \Orbs{\AA_{2k-1}} = \Orbs{\AA_{2(k-1)+1}} = k-1 + 1 + 1 = k + 1 \ , \]
and then from the formula above
\begin{equation*}
\Orbs{\DD_n} = \begin{cases}  \Orbs{\AA_{n-2}} + 3 = k+3 & \mbox{ if } n = 2k \ , \\ \Orbs{\AA_{n-2}} + 1 = k + 1 + 1 = k+2 & \mbox{ if } n =
2k+1 \ .
\end{cases}
\end{equation*}
\end{proof}

\subsection{Diagrams of type $\tilde{\DD}_n$} \label{subsec:tilde-Dn}

The affine Dynkin diagram of type $\tilde{\DD}_n$ with $n+1$ vertices labeled $\{0,1,...,n\}$ with $n \ge 5$ is
\begin{equation*}
\sxymatrix{ \bc{1} \rline & \bc{2} \dline \rline & \cdots \rline & \bc{n-3} \rline & \bc{n-2} \dline \rline & \bc{n-1} \\
& \bcu{0} & & & \bcu{n} & }
\end{equation*}
It is denoted $\DD_n^{(1)}$ in \cite[Table 6]{OV},
This diagram has two vertices of degree 3, the vertex $2$ and the vertex $n-2$.

We introduce the following notation
\[ \dd = \left( \frac{d_1}{d_0}d_2 ... d_{n-2} \frac{d_{n-1}}{d_n} \right) \]
for a labeling.

\begin{remark} \label{lem:tilde-Dn-fixed-points1}\label{rem:tilde-Dn-fixed-points1}
The labelings
\[
\ell_0 = \left(\frac{0}{0}0...0\frac{0}{0} \right), \ \ell_l = \left(\frac{1}{1}0...0\frac{0}{0} \right), \
\ell_r = \left(\frac{0}{0}0...0\frac{1}{1} \right), \ \ell_c = \left(\frac{1}{1}0...0\frac{1}{1} \right)
\]
are fixed labelings.
\end{remark}

\begin{remark} \label{lem:tilde-Dn-fixed-points2}\label{rem:tilde-Dn-fixed-points2}
If $n=2k$ is even then the labelings
\[
\ell_1 = \left(\frac{1}{0}0\xi_{k-2}^{n-5}0\frac{1}{0} \right), \ \ell_2 = \left(\frac{0}{1}0\xi_{k-2}^{n-5}0\frac{1}{0} \right), \
\ell_3 = \left(\frac{1}{0}0\xi_{k-2}^{n-5}0\frac{0}{1} \right), \ \ell_4 = \left(\frac{0}{1}0\xi_{k-2}^{n-5}0\frac{0}{1} \right)
\]
are fixed labelings. Note that each of these fixed labelings has $(k-2)+2=k=n/2$ components.
\end{remark}

\begin{remark}
Note the symmetry of the fixed labelings. They are due to the reflection symmetries of $\tilde{\DD}_n$.
\end{remark}

\begin{proposition}
If $n=2k$ then
\[ \Cls{\tilde{\DD}_n} = k + 7 = \frac{n}{2} + 7 \ . \]
\end{proposition}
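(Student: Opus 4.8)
The plan is to mirror the strategy already used for $\DD_n$ in Proposition \ref{prop:orbits-Dn} and for the earlier affine cases: build an injection from the classes of a straight-line diagram living on the neck, and then account separately for the finitely many fixed labelings that fall outside its image. Write a labeling as $\dd = \frac{d_1}{d_0} d_2 \cdots d_{n-2} \frac{d_{n-1}}{d_n}$ and call the chain $d_2,\dots,d_{n-2}$ (of length $n-3$) the \emph{neck}. I would define
\[
\varphi : L(\AA_{n-3}) \longrightarrow L(\tilde{\DD}_n), \qquad \aa \longmapsto \tfrac{0}{0}\,\aa\,\tfrac{0}{0},
\]
placing $\aa$ on the neck and all four leaves at $0$. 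The observation that makes this work is that when the leaves vanish, both degree-$3$ vertices $2$ and $n-2$ behave exactly as the end vertices of a chain: $(\T_2\dd)_2 = d_2 + d_0 + d_1 + d_3 = d_2 + d_3$, and symmetrically at $n-2$, so the moves on the neck coincide with the $\AA_{n-3}$-moves. Hence $\varphi$ intertwines the two move sets on the leaf-zero locus and the induced map $\varphi_* : \Cl(\AA_{n-3}) \to \Cl(\tilde{\DD}_n)$ is well-defined.

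For injectivity I would argue exactly as in Proposition \ref{prop:orbits-Dn}: if $\varphi(\aa) \sim \varphi(\aa')$ then the two leaf-zero labelings carry the same number of components, so $\aa$ and $\aa'$ have equally many components in $\AA_{n-3}$ and are therefore equivalent by Theorem \ref{th:basic-An}. Thus $\varphi_*$ is injective.

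The real content is the computation of $\Img \varphi_*$, where I would show
\[
\Img \varphi_* = \Cl(\tilde{\DD}_n) \smallsetminus \{\,[\ell_l],[\ell_r],[\ell_c],[\ell_1],[\ell_2],[\ell_3],[\ell_4]\,\}.
\]
Note first that $\ell_0 = \varphi(0)$ does lie in the image, whereas the remaining seven fixed labelings of Remarks \ref{rem:tilde-Dn-fixed-points1} and \ref{rem:tilde-Dn-fixed-points2} each carry a nonzero leaf and form singleton orbits, so they are excluded. The substance is the converse: every labeling that is none of these seven is equivalent to one with all four leaves $0$. I would reduce the right leaf-pair $\frac{d_{n-1}}{d_n}$ to $\frac{0}{0}$ exactly as in Lemma \ref{lem:Dn-basics2} (pushing a neck component against the fork to swallow a $\frac{1}{1}$, or pushing a $\frac{1}{0}$ or $\frac{0}{1}$ into the neck), and then repeat the mirror-image argument on the left leaf-pair at the fork $2$. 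The obstructions are precisely the listed labelings: an empty neck carrying a $\frac{1}{1}$ on one or both sides yields $\ell_l,\ell_r,\ell_c$, while a fully saturated neck $0\,\xi_{k-2}^{n-5}\,0$ blocking a $\frac{1}{0}$ or $\frac{0}{1}$ from entering on either side yields $\ell_1,\dots,\ell_4$.

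The step I expect to be the main obstacle is verifying that the two fork-reductions do not interfere — that after clearing one leaf-pair, the procedure clearing the other does not reintroduce $1$'s on the first — and, relatedly, confirming that no \emph{semi-fixed} classes arise (as they did for $\tilde{\BB}_n$). Here the absence of a short ``ghost'' vertex is what prevents two distinct leaf-patterns from being glued into a single class, so the seven excluded classes really are fixed singletons and the bookkeeping closes with no extra classes. Granting the image description, injectivity of $\varphi_*$ then gives
\[
\Cls{\tilde{\DD}_n} = \Cls{\AA_{n-3}} + 7 = \left\lceil \tfrac{2k-3}{2} \right\rceil + 1 + 7 = (k-1) + 1 + 7 = k + 7,
\]
which is the claimed value $\tfrac{n}{2} + 7$.
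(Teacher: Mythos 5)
Your proposal is correct and follows essentially the same route as the paper: the same map $\varphi(\aa) = \frac{0}{0}\aa\frac{0}{0}$ onto the leaf-zero locus, the same component-counting injectivity argument via Theorem \ref{th:basic-An}, and the same identification of the image as everything except the seven fixed singletons $[\ell_l],[\ell_r],[\ell_c],[\ell_1],\dots,[\ell_4]$, giving $\Cls{\AA_{n-3}}+7 = k+7$. The interference worry you flag at the end is handled in the paper simply by the left--right symmetry of the diagram and of the fixed labelings, which lets one treat a single leaf-pair and conclude.
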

\begin{proof}
We consider the map $\varphi : L(\AA_{n-3}) \to L(\tilde{\DD}_n)$ defined by
\[  (d_2,d_3,...,d_{n-2}) \longmapsto \frac{0}{0}d_2d_3...d_{n-2}\frac{0}{0} \]
and its quotient map $\varphi_* : \Cl(\AA_{n-3}) \to \Cl(\tilde{\DD}_n)$ defined by $\varphi_*([\aa]) = [ \varphi(\aa) ]$.

Then $\varphi_*$ is well-defined, if $\aa' \sim \aa$ then clearly $\varphi(\aa') \sim \varphi(\aa)$.
All the fixed labelings but $\ell_0$ are not in the image of $\varphi$ and thus not in the image of $\varphi_*$ as each forms its own equivalence class.

The map $\varphi_*$ is injective. Indeed, suppose $\varphi(\aa) \sim \varphi(\aa')$, then $\frac{0}{0}\aa\frac{0}{0} \sim \frac{0}{0}\aa'\frac{0}{0}$, then $\aa'$ and $\aa$ have the same number of components, which by Theorem \ref{th:basic-An} implies $\aa \sim \aa'$. We showed that if $\varphi_*([\aa]) = \varphi_*([\aa'])$ then $[\aa] = [\aa']$.

Now we prove that
\[ \Img \varphi_* = \Cl(\tilde{\DD}_n) \smallsetminus \{ [\ell_l], [\ell_r], [\ell_c], [\ell_1], [\ell_2], [\ell_3], [\ell_4] \} \ . \]
The proof is similar to the case of $\DD_n$. We show that if $\dd$ is not a fixed labeling then we can change it by moves to a labeling $\dd'$ with $d'_0=d'_1=d'_{n-1}=d'_n=0$ and thus $\dd' \in \Img \varphi$ so $[\dd] = [\dd'] \in \Img \varphi_*$. Because of the symmetries of the left end and right end and the symmetry of the fixed labelings it is enough to consider only one end. Without loss of generality we shall treat the right end.
\begin{itemize}
\item Assume $d_{n-1}=d_n=1$ in $\dd \in L(\tilde{\DD}_n)$. If there is at least 1 component on one of the vertices $2,...,n-2$ then we can push it to the vertex $n-3$, do unsplitting and then swallow the 1's of the leafs (vertices $n-1$ and $n$). Graphically what we do is
    \[ ...10\frac{1}{1} \overset{\T_{n-2}}{\longmapsto} ...11\frac{1}{1} \overset{\T_{n-1}}{\longmapsto} 11\frac{0}{1} \overset{\T_{n}}{\longmapsto} 11\frac{0}{0} \]
    If $d_2 = ... = d_{n-2} = 0$ then we can't do that but then $\dd = \ell_r$ or $\dd = \ell_c$, which we assumed is not the case, or $d_0=0, \ d_1 = 1$ and then we push the 1 to the right end of the neck and do unsplitting and swallowing.
\item Assume $d_{n-1}=1$ and $d_n=0$ in $\dd \in L(\tilde{\DD}_n)$. The only way we can't swallow or move the 1 in $d_{n-1}$ is to have $d_{n-2} = d_{n-4} = ... = d_2 = 0$ and $d_{n-3} = ... = d_1 = 1$ and $d_0 = 0$ (by symmetry another option is $d_0 = 1$ and $d_1 = 0$) but these are the fixed labelings $\ell_1$ and $\ell_2$ which we assumed is not the case.
\item The case $d_{n-1}=0$ and $d_n=1$ in $\dd \in L(\tilde{\DD}_n)$ is symmetric to the previous case.
\end{itemize}
Thus we showed that every labeling which is not a fixed labeling can be changed by moves to have 0 in the leafs, and thus it is in the image of $\varphi_*$.

We conclude that $\varphi_*$ is an isomorphism onto its image and thus
\[ \Cl(\tilde{\DD}_n) \cong \Cl(\AA_{n-3}) \sqcup \{ [\ell_l], [\ell_r], [\ell_c], [\ell_1], [\ell_2], [\ell_3], [\ell_4] \} \ . \]

From section \ref{sec:An} and Eq. \eqref{eq:An-num-of-classes} we have that
\[
\Cls{\AA_{n-3}} = \left\lceil \frac{n-3}{2} \right\rceil + 1 = \left\lceil \frac{2k-2-1}{2} \right\rceil + 1 = k-1 + 1 = k
\]
and thus
\[ \Cls{\tilde{\DD}_n} = \Cls{\AA_{n-3}} + 7 = k + 7 \ . \]
\end{proof}

For the case $n=2k+1$ we need the following lemma:

\begin{lemma} \label{lem:tilde-Dn-oddcase}
Let $n=2k+1$ and consider $\tilde{\DD}_n$. Then a non-fixed labeling with $k$ components must have a 1 in at least one of the vertices 0,1,$n-1$ or $n$ (``the leafs''). Moreover, all labelings with $k$ components which are not fixed labelings and have $k-1$ components on the vertices $2,3,..,n-2$ (``the neck'') are equivalent. We shall denote this equivalence class by $\kappa$.
\end{lemma}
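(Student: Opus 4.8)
The plan is to treat the two assertions separately, deriving the first from the capacity of the neck and the second by reducing every qualifying labeling to a single canonical representative. Throughout I write $\dd = \frac{d_1}{d_0}d_2\cdots d_{n-2}\frac{d_{n-1}}{d_n}$ and call $d_0,d_1,d_{n-1},d_n$ the leaf-labels.

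For the first assertion, observe that the neck consists of the $n-3 = 2(k-1)$ vertices $2,3,\dots,n-2$, which form a path isomorphic to $\AA_{n-3}$. By Corollary~\ref{cor:An-reps} the maximal number of components fitting in a straight line of $m$ vertices is $\lceil m/2\rceil$, so any labeling supported on the neck carries at most $\lceil (n-3)/2\rceil = k-1$ components. Hence, if all four leaf-labels were $0$, the whole labeling would be supported on the neck and would have at most $k-1<k$ components. Consequently a labeling with $k$ components must carry a $1$ on at least one leaf; this uses only the component count and holds whether or not the labeling is fixed.

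For the second assertion, let $\dd$ be non-fixed, with $k$ components, of which $k-1$ lie on the neck. Since $k-1$ is the maximum the neck can hold, its restriction to the neck is one of the maximally packed (alternating) labelings of $\AA_{n-3}$; by Theorem~\ref{th:basic-An} these are interchangeable under $\T_3,\dots,\T_{n-3}$, and when the adjacent leaves carry $0$ the moves $\T_2,\T_{n-2}$ act on the neck as the end $\AA$-moves. Exactly one further component is contributed by the leaves. The plan is to normalize this leaf datum: using the up--down and left--right reflection symmetries of $\tilde{\DD}_n$, I would first reduce to the case where the extra component sits on the right-hand pair $\{n-1,n\}$, and then transport it across the full neck to the distinguished left leaf (vertex $1$), arriving at the single canonical labeling with $d_1=1$, with the neck maximally packed and $d_2=0$, and with $d_0=d_{n-1}=d_n=0$. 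The transport step combines the Swallowing Lemma~\ref{lem:Bn-shortroot-shrink} (to absorb a leaf $1$ into an adjacent neck component) with splitting/unsplitting at the two degree-$3$ vertices and ordinary pushing of components along the neck, exactly as in the proofs for $\DD_n$ and the even case of $\tilde{\DD}_n$ (Lemma~\ref{lem:Dn-basics2} and Proposition~\ref{prop:orbits-Dn}). Since every qualifying $\dd$ reduces to this one representative, they are pairwise equivalent and form a single class, which we name $\kappa$.

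The main obstacle is the bookkeeping at the two degree-$3$ vertices during the transport: one must verify that the extra leaf-component can always be dragged from one end across a maximally packed neck, passing through intermediate labelings that need \emph{not} satisfy the neck condition, while the total number of components stays equal to $k$ throughout (so that we neither irreversibly create a $(k+1)$-st component nor collapse the count). The parity invariant of Lemma~\ref{lem:vertex3} guarantees that we remain in components $\equiv k \pmod 2$, but the exact count and the absence of dead ends require care. I expect to settle this by the same end-by-end case analysis as in Lemma~\ref{lem:Dn-basics2}: at each end the extra component fails to be movable only in the alternating configurations that are precisely the fixed labelings $\ell_l,\ell_r,\ell_c$ (together with the even-case labelings of Remark~\ref{rem:tilde-Dn-fixed-points2}, which do not occur for odd $n$), and these are excluded by hypothesis. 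This confirms that all qualifying labelings are mutually reachable, establishing the lemma and defining $\kappa$.
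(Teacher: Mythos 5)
Your treatment of the first assertion is correct and is the same capacity argument the paper uses: the neck is a path on $n-3=2(k-1)$ vertices, so it carries at most $k-1$ components, forcing a $1$ onto a leaf. For the second assertion your strategy --- normalize every qualifying labeling to one canonical representative by transporting the extra leaf component across the neck --- is genuinely different from the paper's, which argues by induction on $n$: the base case $n=5$ is checked directly, and the induction step deletes two adjacent neck vertices labelled $0,1$, contracts the diagram to $\tilde\DD_{n-2}$, and shows that every move downstairs lifts to a short sequence of moves upstairs, so equivalence for $n$ implies equivalence for $n+2$.

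However, your route as written has a gap exactly at the step you yourself flag as the main obstacle. You propose to justify the transport by ``the same end-by-end case analysis as in Lemma~\ref{lem:Dn-basics2},'' claiming the extra component fails to be movable only in configurations that are fixed labelings. That is false for the labelings this lemma is about: for example $\frac{0}{0}1010\cdots10\frac{1}{0}$ (neck packed to the left, extra component on vertex $n-1$) is locally immovable at the right end --- $\T_{n-2}$ sees the two $1$'s at $n-3$ and $n-1$ and does nothing, while $\T_{n-1}$ and $\T_n$ see only the $0$ at $n-2$ --- yet it is not a fixed labeling, because $\T_1$ acts at the far end. Freeing the right leaf requires first pulling the leftmost neck component out onto vertex $1$, then shifting the remaining $k-2$ neck components one step so as to propagate the resulting hole all the way to position $n-2$, and only then absorbing the right leaf; this global maneuver across the entire maximally packed neck is precisely the content of the lemma, and your proposal asserts it rather than proves it. (It can be written out explicitly, or obtained via the paper's contraction induction, but a local analysis at one end in the style of Lemma~\ref{lem:Dn-basics2} does not deliver it.) You would also need to enumerate the admissible starting shapes before invoking the reflection symmetries --- extra $1$ on vertex $0$, $1$, $n-1$ or $n$; neck packed left, packed right, or with its single unit of slack in the interior; and the case where a leaf $1$ is glued to the adjacent neck component --- which is routine but not automatic.
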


\begin{proof}[Idea of the proof]
We divide the diagram to neck and leafs. The idea is that the neck is maximally packed with $k-1$ components and we can't do much with it other then shift it as a block leftward or rightward by one vertex, or expand 1 component in it or shrink it back. The possibilities for the component in the leafs are rather limited, also due to symmetries: the symmetry group of the diagram is $S \cong \mathbb{Z}/2\mathbb{Z} \times \mathbb{Z}/2\mathbb{Z} \times \mathbb{Z}/2\mathbb{Z}$ of left leafs reflection, right leafs reflection and left-right reflection, which imply that many different cases are the same up to $s \in S$. So essentially there aren't many cases and the limited number of really different cases can be directly checked to see that they agree with the assertion of the lemma.

In the formal proof we use induction. We first check the simplest case $n=5$, $k=2$ and a direct calculation shows that the lemma holds for this case. Then we show in the induction step that the assertion for $n'=n+2$, $k'=k+1$ follows from the assertion for $n$,$k$ by adding (to $n,k$) or removing (from $n',k'$) two adjunct neck vertices with 1 component. We show that equivalence in the $n$,$k$ case implies equivalence in the $n',k'$ case.
\end{proof}

The following is the detailed proof.

\begin{proof}
The first assertion is clear, since in a neck of length $n-3 = 2k+1-3=2k-2=2(k-1)$ we can put only $k-1$ components, and in that case we can put them such that the leftmost or rightmost vertex is 0 and the remaining component is then put in one of the adjunct leafs.

\medskip

The second assertion is proved by induction.

\textbf{The base case} is $n=5$, $k=2$. Then the diagram is
\begin{equation}
\sxymatrix{ \bc{1} \rline & \bc{2} \dline \rline & \bc{3} \dline \rline & \bc{4} \\
                          & \bcu{0}              & \bcu{5}              &             }
\end{equation}
We see that in vertice 2 and 3 (let's call them ``the neck'') we can put only 1 component ($k-1=1$). So we must put another component in the vertices 0,1,$n-1$ or $n$ (let's call them ``the leafs''). Due to symmetries we may assume without loss of generality that we start from
\[ \frac{0}{0}10\frac{1}{0} \]
and by direct checking one can see that all non-fixed labelings with $k=2$ component are equivalent to it and must have it least 1 component in the leafs. We list here some of the possibilities, all lie in the equivalence class $\kappa$:
\[ \frac{1}{0}00\frac{1}{0} , \quad \frac{1}{0}01\frac{0}{0}, \quad \frac{0}{0}10\frac{0}{1}, \quad \frac{1}{1}10\frac{1}{0} \ ... \]

\textbf{The induction step:} assume we have a diagram $\tilde{\DD}_{n}$ with $n'=2k'+1$ vertices, $n \ge 7$ and $k \ge 3$ such that $n'=n+2$ and $k'=k+1$. Then the neck is of length $n'-3=n-1$. Assume the neck has $k'-1=k$ components and there is 1 component in the leafs. Without loss of generality assume the leftmost neck vertex $a_2 = 1$. Then the neck must be of the form
\[ \frac{\ast}{\ast}101...10\frac{1}{0} \quad \mbox{ or } \quad \frac{\ast}{\ast}101...10\frac{0}{1} \]
and without loss of generality we may assume it is the left form. Note that in these forms the labels of the left leafs do not matter and we may assume that $a_0=a_1=0$ by swallowing.

We reduce the case of $n'=n+2$ to the case of $n$ by removing two adjunct vertices with labels $0\ll1$ from the middle of the neck and contract the diagram. For example:
\begin{equation}
\aa = \frac{\ast}{\ast}1\underline{01}0\frac{\ast}{\ast} \longmapsto  \frac{\ast}{\ast}10\frac{\ast}{\ast} = \bb \ .
\end{equation}
It is clear that if $\bb$ has $k-1$ components in the neck then $\aa$ has $k=k'-1$ components. Moreover, since $\bb$ has at least 1 component in the leafs so does $\aa$.

We want to show that if $\aa \mapsto \bb$ and $\aa' \mapsto \bb'$ by the removal of two adjunct neck vertices and $\bb \sim \bb'$ then $\aa \sim \aa'$. The idea is to treat the component to be removed as a block with the adjunct component (i.e. the neighbor of the vertices removed which has a label 1). We now give a formal proof and after it an example which illustrates the algorithm given in the proof.

Assume that the adjunct vertices $a_j a_{j+1}$ with $2 < j < n-3$ were removed from $\aa$ and suppose that $\bb'$ is obtained from $\bb$ by a move $\T_i$. It is convenient to keep the original numbering of the vertices, i.e. $0,1,2,...,j-1,j+2,...,n-1,n$. We note that:
\begin{itemize}
\item If $i=j,j+1$ we do nothing in $\bb$.
\item For any other $i \ne j-1,j,j+1,j+2$ it is clear that $\bb' = \T_i(\bb) \Rightarrow \aa' = \T_i(\aa)$.
\end{itemize}
As for the cases $i=j-1,j+2$:
\begin{itemize}
\item If $a_j = 1$ and $\bb' = \T_{j-1}(\bb)$ then $\aa' = \T_{j-1}(\aa)$.
\item If $a_j = 0$ and $\bb' = \T_{j-1}(\bb)$ then $\aa' = \T_{j-1} \T_{j} \T_{j+1}(\aa)$.
\item If $a_{j+1}=1$ and $\bb' = \T_{j+2}(\bb)$ then $\aa' = \T_{j+2}(\aa)$.
\item If $a_{j+1}=0$ and $\bb' = \T_{j+2}(\bb)$ then $\aa' = \T_{j+2} \T_{j+1} \T_j (\aa)$.
\end{itemize}
These four cases can be checked directly and they are actually two cases due to symmetry (so it is enough to check the first two cases, for example).

Let us show a simple example. Consider $\aa = a_{j-1} \underline{ a_j a_{j+1}} a_{j+2}$ and its image $\bb = a_{j-1}.a_{j+2}$ (the underlined vertices were removed and the dot stands for their place). Then
\begin{eqnarray*}
\aa = 1\underline{10}1 \overset{\T_j}{\longmapsto} 1\underline{00}1 \overset{\T_{j+1}}{\longmapsto} 1\underline{01}1 \overset{\T_{j+2}}{\longmapsto} 1\underline{01}0 = \aa' \\
\bb = 1.1 \overset{\T_{j+2}}{\longmapsto} 1.0 = \bb'
\end{eqnarray*}
and we see that $\bb'$ is the image of $\aa'$ under the removal of the underlined vertices. Moreover,
\begin{eqnarray*}
\aa' =  1\underline{01}0 \overset{\T_{j+2}}{\longmapsto} = 1\underline{01}1 = \aa'' \\
\bb' = 1.0 \overset{\T_{j+2}}{\longmapsto} 1.1 = \bb''
\end{eqnarray*}
and we see that $\bb''$ is the image of $\aa''$ under the removal of the underlined vertices.

Thus we showed that if  $\aa \mapsto \bb$ and $\aa' \mapsto \bb'$ by the removal of two adjunct neck vertices and $\bb \sim \bb'$ then $\aa \sim \aa'$. This concludes the reduction algorithm.

Now we deduce the assertion for $n'=n+2$:  since all $\bb \in L(\tilde{\DD}_n)$ with $k$ components and $k-1$ components in the neck are equivalent and belongs to $\kappa_n$ (by the induction hypothesis) so are their preimages $\aa \in L(\tilde{\DD}_{n'})$ which have $k'=k+1$ components with $k'-1=k$ components in the neck and they belong to $\kappa_{n'}$.

This concludes the induction step, and the proof.
\end{proof}

\begin{proposition}
If $n=2k+1$ then
\[ \Cls{\tilde{\DD}_n} = k + 4 = \frac{n-1}{2} + 4 \ . \]
\end{proposition}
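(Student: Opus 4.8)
The plan is to mirror the even case $n=2k$, using the same injection and analyzing which classes fall outside its image; the only structural change is that the four extra fixed labelings $\ell_1,\dots,\ell_4$ of the even case are replaced by the single class $\kappa$ supplied by Lemma \ref{lem:tilde-Dn-oddcase}. Concretely, I would use the map $\varphi\colon L(\AA_{n-3})\to L(\tilde{\DD}_n)$, $(d_2,\dots,d_{n-2})\mapsto \frac{0}{0}d_2\cdots d_{n-2}\frac{0}{0}$, and the induced map $\varphi_*([\aa])=[\varphi(\aa)]$ on classes.

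First I would record, exactly as in the even case, that $\varphi_*$ is well-defined and injective: well-definedness because $\aa'\sim\aa$ lifts move-for-move to $\frac{0}{0}\aa'\frac{0}{0}\sim\frac{0}{0}\aa\frac{0}{0}$ (the degree-$3$ vertices $2$ and $n-2$ act as the ordinary $\AA$-moves when all four leafs carry $0$), and injectivity because $\frac{0}{0}\aa\frac{0}{0}\sim\frac{0}{0}\aa'\frac{0}{0}$ forces $\aa$ and $\aa'$ to have the same number of components, whence $\aa\sim\aa'$ by Theorem \ref{th:basic-An}. Since $n-3=2(k-1)$, Eq.~\eqref{eq:An-num-of-classes} gives $\Cls{\AA_{n-3}}=(k-1)+1=k$, so the image of $\varphi_*$ has exactly $k$ classes.

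Next comes the heart of the matter: identifying the image as
\[
\Img\varphi_* = \Cl(\tilde{\DD}_n)\smallsetminus\{[\ell_l],[\ell_r],[\ell_c],[\kappa]\}.
\]
For the inclusion ``$\subseteq$'' the four listed classes are excluded because $\ell_l,\ell_r,\ell_c$ are fixed labelings carrying a $1$ in the leafs (each a singleton class), while by Lemma \ref{lem:tilde-Dn-oddcase} every labeling in $\kappa$ has a $1$ in some leaf and so is not of the form $\frac{0}{0}\aa\frac{0}{0}$. For ``$\supseteq$'' I would run the leaf-clearing routine used for $n=2k$: given $\dd$ that is neither fixed nor in $\kappa$, push a neck component to the relevant end, unsplit at the degree-$3$ vertex, and swallow the leaf $1$'s, so that $\dd\sim\frac{0}{0}\aa'\frac{0}{0}\in\Img\varphi$; by the threefold reflection symmetry it suffices to clear the right end. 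Combining the two inclusions, the $k$ classes in $\Img\varphi_*$ (already including $\ell_0=\varphi(0)$) together with the four excluded classes yield
\[
\Cls{\tilde{\DD}_n}=\Cls{\AA_{n-3}}+4=k+4=\tfrac{n-1}{2}+4.
\]

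The main obstacle is the dichotomy in the leaf-clearing step, namely showing that the routine gets stuck precisely on $\kappa$ (and the fixed labelings), and correspondingly that $\kappa$ is genuinely outside the image. When clearing, say, the right leaf with $d_{n-1}=1$, $d_n=0$, failure forces the neck into the maximally packed pattern of $k-1$ components with the leftover $1$ trapped in a leaf; because the neck now has even length $2(k-1)$ this configuration is not a fixed labeling (unlike the even case) but has $k$ components with $k-1$ in the neck, and Lemma \ref{lem:tilde-Dn-oddcase} identifies all such labelings as the single class $\kappa$. The delicate point I would check carefully is that no representative of $\kappa$ can be unsplit down to all-leaf-zero form: at the degree-$3$ vertex adjacent to the trapped $1$ the three branch labels are not all equal to $1$, so no unsplitting is available there, confirming $\kappa\notin\Img\varphi_*$ and keeping the four exceptional classes distinct.
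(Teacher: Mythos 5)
Your proposal is correct and follows essentially the same route as the paper: the same injection $\varphi\colon L(\AA_{n-3})\to L(\tilde{\DD}_n)$, the same identification of the image as the complement of $\{[\ell_l],[\ell_r],[\ell_c],\kappa\}$ via the leaf-clearing routine, and the same appeal to Lemma \ref{lem:tilde-Dn-oddcase} to collapse the stuck configurations into the single class $\kappa$. The count $\Cls{\AA_{n-3}}+4=k+4$ matches the paper's conclusion exactly.
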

\begin{proof}
We consider the map $\varphi : L(\AA_{n-3}) \to L(\tilde{\DD}_n)$ defined by
\[  (d_2,d_3,...,d_{n-2}) \longmapsto \frac{0}{0}d_2d_3...d_{n-2}\frac{0}{0} \]
and its quotient map $\varphi_* : \Cl(\AA_{n-3}) \to \Cl(\tilde{\DD}_n)$ defined by $\varphi_*([\aa]) = [ \varphi(\aa) ]$.

Then $\varphi_*$ is well-defined, if $\aa' \sim \aa$ then clearly $\varphi(\aa') \sim \varphi(\aa)$.
All the fixed labelings but $\ell_0$ are not in the image of $\varphi$ and thus not in the image of $\varphi_*$ as each forms its own equivalence class.

The map $\varphi_*$ is injective. Indeed, suppose $\varphi(\aa) \sim \varphi(\aa')$, then $\frac{0}{0}\aa\frac{0}{0} \sim \frac{0}{0}\aa'\frac{0}{0}$, then $\aa'$ and $\aa$ have the same number of components, which by Theorem \ref{th:basic-An} implies $\aa \sim \aa'$. We showed that if $\varphi_*([\aa]) = \varphi_*([\aa'])$ then $[\aa] = [\aa']$.

Now we prove that
\[ \Img \varphi_* = \Cl(\tilde{\DD}_n) \smallsetminus \{ [\ell_l], [\ell_r], [\ell_c], \kappa \}  \ . \]
The proof is similar to the case of $\DD_n$. We show that if $\dd$ is not a fixed labeling or in $\kappa$ then we can change it by moves to a labeling $\dd'$ with $d'_0=d'_1=d'_{n-1}=d'_n=0$ and thus $\dd' \in \Img \varphi$ so $[\dd] = [\dd'] \in \Img \varphi_*$. Because of the symmetries of the left end and right end and the symmetry of the fixed labelings it is enough to consider only one end. Without loss of generality we shall treat the right end.
\begin{itemize}
\item Assume $d_{n-1}=d_n=1$ in $\dd \in L(\tilde{\DD}_n)$. If there is at least 1 component on one of the vertices $2,...,n-2$ then we can push it to the vertex $n-3$, do unsplitting and then swallow the 1's of the leafs (vertices $n-1$ and $n$). Graphically what we do is
    \[ ...10\frac{1}{1} \overset{\T_{n-2}}{\longmapsto} ...11\frac{1}{1} \overset{\T_{n-1}}{\longmapsto} 11\frac{0}{1} \overset{\T_{n}}{\longmapsto} 11\frac{0}{0} \]
    If $d_2 = ... = d_{n-2} = 0$ then we can't do that but then $\dd = \ell_r$ or $\dd = \ell_c$, which we assumed is not the case, or $d_0=0, \ d_1 = 1$ and then we push the 1 to the right end of the neck and do unsplitting and swallowing.
\item Assume $d_{n-1}=1$ and $d_n=0$ in $\dd \in L(\tilde{\DD}_n)$. If vertices $2,...,n-2$ has less than $k-1$ components we can push the component of the leaf $d_{n-1}$ to the neck of the vertices $2,...,n-2$. Note that we can't have an alternating labels that form a fixed labeling due to parity considerations (which is why we do not have the fixed labelings $\ell_1,\ell_2,\ell_3,\ell_4$). If there are exactly $k-1$ components in the vertices $2,...,n-3$ ($d_{n-2}=0$) then we are in case of Lemma \ref{lem:tilde-Dn-oddcase} which shows us that $\dd \in \kappa \notin \Img \varphi_*$, which we assumed is not the case. If we have $d_{n-2}=1$ we can simply swallow the 1 of $d_{n-1}$ by applying $\T_{n-1}$.
\item The case $d_{n-1}=0$ and $d_n=1$ in $\dd \in L(\tilde{\DD}_n)$ is symmetric to the previous case.
\end{itemize}
Thus we showed that every labeling which is not a fixed labeling or in $\kappa$ can be changed by moves to have 0 in the leafs, and thus it is in the image of $\varphi_*$.

We conclude that $\varphi_*$ is an isomorphism onto its image and thus
\[ \Cl(\tilde{\DD}_n) \cong \Cl(\AA_{n-3}) \sqcup \{ [\ell_l], [\ell_r], [\ell_c], \kappa \} \ . \]

From section \ref{sec:An} and Eq. \eqref{eq:An-num-of-classes} we have that
\[
\Cls{\AA_{n-3}} = \left\lceil \frac{n-3}{2} \right\rceil + 1 = \left\lceil \frac{2k+1-3}{2} \right\rceil + 1 = k-1 + 1 = k
\]
and thus
\[ \Cls{\tilde{\DD}_n} = \Cls{\AA_{n-3}} + 4 = k + 4 \ . \]
\end{proof}

To conclude:

\begin{corollary}
For $\tilde{\DD}_n$ we have
\[ \Cls{\tilde{\DD}_n} = \begin{cases} k+7 & \mbox{ if  }\ n=2k \ , \\ k+4 & \mbox{ if  }\ n=2k+1 \ . \end{cases} \]
As \minreps we can take $\ell_l, \ell_r, \ell_c$ and $\frac{0}{0} \xi_i^{n-3} \frac{0}{0}$ for $i=0,...,k-1$,
and in addition: $\frac{0}{0}\xi_{k-1}^{n-4}0\frac{1}{0} \in \kappa$ when $n=2k+1$ and $\ell_1, \ell_2, \ell_3, \ell_4$ when $n=2k$.
\end{corollary}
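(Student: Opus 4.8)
The plan is to treat this corollary as a bookkeeping summary of the two preceding propositions together with Lemma \ref{lem:tilde-Dn-oddcase}; no new substantive argument is needed. For the cardinalities I would simply quote the two propositions directly: the even case $n=2k$ gives $\Cls{\tilde{\DD}_n}=k+7$ and the odd case $n=2k+1$ gives $\Cls{\tilde{\DD}_n}=k+4$, which is exactly the case distinction asserted. So the counting half of the statement requires nothing further.

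For the list of minimal representatives I would use the isomorphism produced in each proof,
\[
\Cl(\tilde{\DD}_n) \cong \Cl(\AA_{n-3}) \sqcup S,
\]
where $S$ is the set of classes outside $\Img\varphi_*$ — namely $\{[\ell_l],[\ell_r],[\ell_c],[\ell_1],[\ell_2],[\ell_3],[\ell_4]\}$ when $n=2k$ and $\{[\ell_l],[\ell_r],[\ell_c],\kappa\}$ when $n=2k+1$. The classes in $\Img\varphi_*$ are indexed by $\Cl(\AA_{n-3})$, whose canonical minimal representatives are $\xi_i^{n-3}$ for $0\le i\le k-1$; here one checks $\lceil (n-3)/2\rceil = k-1$ in both parities. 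Since $\varphi$ only pads a labeling of $\AA_{n-3}$ with zeros on the four leaf vertices, it leaves the number of $1$'s unchanged, so $\frac{0}{0}\xi_i^{n-3}\frac{0}{0}$ is a minimal representative of $\varphi_*([\xi_i^{n-3}])$. This yields the stated family. Each fixed labeling is a singleton class and so is trivially its own minimal representative, accounting for $\ell_l,\ell_r,\ell_c$ in both parities and for $\ell_1,\dots,\ell_4$ in the even case.

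The only point needing a short verification is the representative of $\kappa$ in the odd case. I would check that $\frac{0}{0}\xi_{k-1}^{n-4}0\frac{1}{0}$ is not a fixed labeling, that its neck $\xi_{k-1}^{n-4}0$ (a labeling of the $n-3$ neck vertices) carries $k-1$ components, and that the single remaining $1$ sits on a right leaf, so that it has exactly $k$ components with $k-1$ in the neck; by Lemma \ref{lem:tilde-Dn-oddcase} it therefore lies in $\kappa$. Minimality is then automatic, since every labeling in $\kappa$ has $k$ components and no labeling with fewer $1$'s can have $k$ components. Assembling the two pieces in each parity reproduces the announced lists, and counting them returns $k+7$ and $k+4$. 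The only (very mild) obstacle is this confirmation of membership in $\kappa$ and of minimality; all the genuine work has already been carried out in the two propositions and in Lemma \ref{lem:tilde-Dn-oddcase}.
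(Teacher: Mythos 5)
Your proposal is correct and matches the paper's (implicit) argument: the corollary is stated there without proof precisely because it is the bookkeeping consolidation of the two preceding propositions and Lemma \ref{lem:tilde-Dn-oddcase} that you carry out. Your added verifications --- that $\lceil (n-3)/2\rceil = k-1$ in both parities, that $\varphi$ preserves the number of $1$'s so the $\xi_i^{n-3}$ representatives stay minimal, and that $\frac{0}{0}\xi_{k-1}^{n-4}0\frac{1}{0}$ is a non-fixed labeling with $k$ components of which $k-1$ lie in the neck and hence belongs to $\kappa$ --- are exactly the right details to make the summary rigorous.
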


Let us show two examples:

\begin{example}
For $\tilde{\DD}_7$ the diagram (with 8 vertices) is
\begin{equation*}
\sxymatrix{ \bc{1} \rline & \bc{2} \dline \rline & \bc{3} \rline &  \bc{4} \rline & \bc{5} \dline \rline & \bc{6} \\
& \bcu{0} & & & \bcu{7} & }
\end{equation*}
and the representatives are:
\[
\frac{0}{0}0000\frac{0}{0} , \quad \ell_l = \frac{1}{1}0000\frac{0}{0}, \quad \ell_r = \frac{0}{0}0000\frac{1}{1}, \quad \ell_c = \frac{1}{1}0000\frac{1}{1},
\]
and
\[
\frac{0}{0}1000\frac{0}{0}, \quad \frac{0}{0}1010\frac{0}{0}, \quad \mbox{ and } \quad \kappa \ni \frac{0}{0}1010\frac{1}{0} .
\]
In total we have 7 classes. Here $n=7$ and $k=3$.
\end{example}

\begin{example}
For $\tilde{\DD}_6$ the diagram (with 7 vertices) is
\begin{equation*}
\sxymatrix{ \bc{1} \rline & \bc{2} \dline \rline &  \bc{3} \rline & \bc{4} \dline \rline & \bc{5} \\
& \bcu{0} & & \bcu{6} & }
\end{equation*}
and the representatives are:
\[
\frac{0}{0}000\frac{0}{0} , \quad \ell_l = \frac{1}{1}000\frac{0}{0}, \quad \ell_r = \frac{0}{0}000\frac{1}{1}, \quad \ell_c = \frac{1}{1}000\frac{1}{1},
\]
and
\[
\frac{0}{0}100\frac{0}{0}, \quad \frac{0}{0}101\frac{0}{0},
\]
and
\[
\ell_1 = \frac{1}{0}010\frac{1}{0}, \quad \ell_2 = \frac{0}{1}010\frac{1}{0}, \quad \ell_3 = \frac{1}{0}010\frac{0}{1}, \quad \mbox{ and } \quad \ell_4 =  \frac{0}{1}010\frac{0}{1}.
\]
In total we have 10 classes. Here $n=6$ and $k=3$.
\end{example}

\subsection{Diagram with 3 lengths of vertices}
\label{sec:Xn}

Here we consider the diagram denoted by $A_{2\ell}^{(2)}$ in \cite[Table 6]{OV}. We shell denote it by $\XX_{n}$ and recall it has $n+2$ vertices. The diagram is
\begin{equation*}
\sxymatrix{ \bc{0} \arr & \bc{1} \rline & \cdots & \lline \bc{n} \arr & \bc{n+1} }
\end{equation*}
Recall that a (longer) vertex does not see its shorter neighbor(s).

We shall denote a labeling $\bb = (b_0 \RRR b_1 , ... , b_n \RRR b_{n+1})$ or $\bb = (b_0 \RRR b_1  ...  b_n \RRR b_{n+1})$.

\begin{lemma} \label{lem:X-bijection}
For $\XX_{n}$ we have $\Cl(\XX_{n}) \cong \Cl(\BB_{n+1}) \sqcup \Cl(\BB_{n+1}^{(1)})$ where
\[
B_{n+1}^{(1)} = \sxymatrix{ \boxone \rline & \bc{1} \rline & \cdots & \lline \bc{n} \arr & \bc{n+1} }
\]
\end{lemma}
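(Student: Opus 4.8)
The plan is to mimic the argument for $\CC_n$ in Lemma \ref{lem:Cn} (and its affine analogue in Section \ref{subsec:tilde-Cn}), exploiting the fact that the leftmost longer vertex $0$ is frozen and, according to its label, acts either as an absent vertex or as a boxed $1$ attached to vertex $1$.

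First I would record the local behaviour of the moves at the two ends. Since vertex $1$ is a \emph{shorter} neighbour of vertex $0$ joined by a double (even) edge, formula \eqref{eq:move-nonsimple} shows that $\T_0$ omits vertex $1$ from its sum, so $(\T_0(\bb))_0 = b_0$; as every other move changes only its own vertex, the label $b_0$ is invariant under all moves. On the other hand vertex $0$ is a \emph{longer} neighbour of vertex $1$, hence it is \emph{not} excluded from the sum defining $\T_1$, giving $(\T_1(\bb))_1 = b_1 + b_0 + b_2$. At the right end the vertices $1,\dots,n+1$ behave exactly as the corresponding vertices of $\BB_{n+1}$: vertex $n$ does not see the shorter vertex $n+1$, while $\T_{n+1}$ adds $b_n$. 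Thus for $i=2,\dots,n+1$ the move $\T_i$ on $\XX_n$ literally coincides with the move of the same name on $\BB_{n+1}$ (and on $\BB_{n+1}^{(1)}$).

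Next I would use the invariance of $b_0$ to split $L(\XX_n) = \{\,b_0 = 0\,\} \sqcup \{\,b_0 = 1\,\}$ and define
\[
\varphi \colon L(\XX_n) \longrightarrow L(\BB_{n+1}) \sqcup L(\BB_{n+1}^{(1)})
\]
by $\varphi(0 \RRR b_1 \cdots b_n \RRR b_{n+1}) = (b_1 \cdots b_n \RRR b_{n+1}) \in L(\BB_{n+1})$ and $\varphi(1 \RRR b_1 \cdots b_n \RRR b_{n+1}) = (\boe \ll b_1 \cdots b_n \RRR b_{n+1}) \in L(\BB_{n+1}^{(1)})$, with the evident inverse. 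The point of the previous paragraph is that $\varphi$ intertwines the moves: when $b_0 = 0$ the term $b_0$ in $(\T_1(\bb))_1 = b_1 + b_0 + b_2$ vanishes and $\T_1$ reduces to the $\BB_{n+1}$-move $b_1 + b_2$; when $b_0 = 1$ the same term contributes the constant $1$, which is precisely the contribution of the boxed $1$ to the $\BB_{n+1}^{(1)}$-move $b_1 + 1 + b_2$. All the moves $\T_2,\dots,\T_{n+1}$ transport verbatim, and $\T_0$ is the identity, so it creates no identifications across the two blocks.

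Finally I would pass to the quotient. Because $b_0$ is preserved, no labeling with $b_0 = 0$ is ever equivalent to one with $b_0 = 1$, so equivalence in $\XX_n$ decomposes blockwise; combined with the move-intertwining above this yields $\bb \sim \bb' \iff \varphi(\bb) \sim \varphi(\bb')$ in each block. Hence the induced map $\varphi_* \colon \Cl(\XX_n) \to \Cl(\BB_{n+1}) \sqcup \Cl(\BB_{n+1}^{(1)})$, $[\bb] \mapsto [\varphi(\bb)]$, is a well-defined bijection, which is the assertion of the lemma. I expect the only delicate point to be the bookkeeping in the move-intertwining step, in particular checking that the frozen vertex $0$ carrying label $1$ genuinely replicates the boxed $1$ at every application of $\T_1$ and nowhere interferes with the remaining moves; this is exactly the phenomenon already isolated for the longer end vertices in the $\CC_n$ and $\tilde{\CC}_n$ computations, so the verification is routine once the local move formulas above are in hand.
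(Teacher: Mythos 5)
Your proposal is correct and follows essentially the same route as the paper: the paper defines exactly the same map $\varphi$ (sending $b_0=0$ labelings to $L(\BB_{n+1})$ and $b_0=1$ labelings to $L(\BB_{n+1}^{(1)})$ with a boxed $1$) and passes to the quotient, citing the $\CC_n$ subsection for the verification that the frozen longer vertex with label $1$ behaves as a boxed $1$. You have merely spelled out the move-intertwining details that the paper leaves as a cross-reference.
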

\begin{proof}
The map $\varphi : L(\XX_{n}) \to L(\BB_{n+1}) \sqcup L(\BB_{n+1}^{(1)})$ defined by
\[
\varphi(b_0 \RRR b_1,...,b_n \RRR b_{n+1}) = \begin{cases}
b_1...b_n \RRR b_{n+1} \in L(\BB_{n+1}) & \mbox{ if } b_0 = 0 , \\
\boe \ll b_1...b_n \RRR b_{n+1} \in L(\BB_{n+1}^{(1)}) & \mbox{ if } b_0 = 1 .
\end{cases}
\]
is clearly a bijection. The quotient map
\[
\varphi_* : \Cl(\XX_{n}) \to \Cl(\BB_{n+1}) \sqcup \Cl(\BB_{n+1}^{(1)}), \qquad \varphi_*([\bb]) = [\varphi(\bb)] \ ,
\]
is well-defined and a bijection as well (cf. subsection \ref{subsec:Cn}).
\end{proof}

We now state a result from \cite{BE-real} about the diagram $\BB_{n+1}^{(1)}$ which is a special cases of $\BB_{n+1}^{(m)}$ with $m=1$.

\begin{proposition} \label{prop:Bn^(1)-reps}
For $\BB_{n+1}^{(1)}$ we can take the following representatives:
\[
\boe \ll \eta_0^n \RRR 0 , \quad \boe \ll \eta_1^n \RRR 0 , \quad ... \quad \boe \ll \eta_k^n \RRR 0 , \quad
\]
where $k = \lceil (n-1)/2 \rceil$ and thus
\begin{equation}
\Cls{\BB_{n+1}^{(1)}} = 1 + \left\lceil \frac{n-1}{2} \right\rceil =
\begin{cases}  1 + k & \mbox{ if } n=2k , \\ 1 + k & \mbox{ if } n=2k+1 \ . \end{cases}
\end{equation}
\end{proposition}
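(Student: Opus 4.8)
The plan is to reduce $\BB_{n+1}^{(1)}$ to the auxiliary diagram $\AA_n^{(1)}$ (the tail consisting of the boxed vertex $0$ together with the vertices $1,\dots,n$), whose classes are already counted in Proposition \ref{prop:aux-Cn} via the reflection equivalence $\AA_n^{(1)}\cong\AA_n^{(n)}$. Concretely, I would introduce the map
\[
\varphi \colon L(\AA_n^{(1)}) \longrightarrow L(\BB_{n+1}^{(1)}), \qquad \aa \longmapsto (\aa \RRR 0),
\]
which attaches a ghost vertex labelled $0$, and show that the induced map $\varphi_*$ on classes is a bijection. The decisive feature distinguishing this case from plain $\BB_{n}$ is that the boxed $1$ at vertex $0$ forces the long-root tail to contain at least one component for \emph{every} labeling; consequently the Swallowing Lemma is always applicable and no separate fixed class of the form $(\xi_0\RRR 1)$ survives. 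This is exactly why the answer is $\Cls{\AA_n^{(1)}}$ with no additive correction.

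First I would check that $\varphi_*$ is well defined and injective. Well-definedness is immediate: the moves $\T_1,\dots,\T_n$ on the tail of $\BB_{n+1}^{(1)}$ coincide with the $\AA_n^{(1)}$-moves (vertex $n$ is the longer vertex and does not see the ghost), so $\aa \sim \aa'$ in $\AA_n^{(1)}$ implies $\varphi(\aa) \sim \varphi(\aa')$. For injectivity, I note that $\T_1,\dots,\T_n$ preserve the number of components of the tail (each acts as a degree $\le 2$ move there, so Lemma \ref{lem:invartiant-comp} applies), while $\T_{n+1}$ leaves the tail untouched; hence the number of components of the $\AA_n^{(1)}$-tail, counting the boxed $1$, is an invariant of $\BB_{n+1}^{(1)}$. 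Thus if $(\aa \RRR 0) \sim (\aa' \RRR 0)$ then $\aa$ and $\aa'$ have equally many components, and by Lemma \ref{lem:aux-for-Cn} this forces $\aa \sim \aa'$ in $\AA_n^{(1)}$.

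Next I would prove surjectivity, which is where the boxed $1$ is genuinely used. Given any $\bb \in L(\BB_{n+1}^{(1)})$, its long-root tail is nonzero because vertex $0$ carries a fixed $1$; the manipulation in the proof of the Swallowing Lemma (Lemma \ref{lem:Bn-shortroot-shrink}) is local to the right-hand end, so it applies here verbatim, letting us expand the rightmost component to the ghost, swallow its $1$, and shrink back to obtain $\bb \sim (\aa \RRR 0) = \varphi(\aa)$ for some $\aa$. The boxed $1$ sits harmlessly at the far left and never interferes. Combining the three steps, $\varphi_*$ is a bijection and $\Cls{\BB_{n+1}^{(1)}} = \Cls{\AA_n^{(1)}}$.

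Finally I would transport the count through $\AA_n^{(1)}\cong\AA_n^{(n)}$ and invoke Proposition \ref{prop:aux-Cn}, which gives $\Cls{\AA_n^{(n)}} = \lceil (n-1)/2\rceil + 1 = 1+k$, the claimed value. For the representatives, the reflections of the canonical left-packed representatives $\xi_i^{n-1}0\ll\boe$ of $\AA_n^{(n)}$ are precisely $\boe\ll\eta_i^n$ for $0\le i\le k$: for these indices the leftmost $1$ of $\eta_i^n$ lies at position $\ge 2$ (namely at $n-2i+2$), so it does not merge with the boxed $1$, and the component count is $1+i$. Applying $\varphi$ yields $\boe\ll\eta_i^n\RRR0$, which are minimal since the ghost is $0$ and each $\eta_i^n$ is compact. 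I expect the only delicate point to be the surjectivity step; everything there hinges on the single observation that the boxed $1$ keeps the tail permanently nonempty, eliminating the extra fixed class that appears for $\BB_n$.
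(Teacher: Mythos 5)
Your argument is correct. Note that the paper itself supplies no proof of this proposition: it is quoted from \cite{BE-real} as the special case $m=1$ of $\BB_{n+1}^{(m)}$, so there is no in-paper argument to compare against. What you have written is exactly the template the paper uses for $\BB_n$ in Lemma \ref{lem:Bn-bijection}, transported to the boxed variant: a quotient map $\varphi_*$ induced by appending a ghost labelled $0$, injectivity from the invariance of the tail's component count (Lemma \ref{lem:invartiant-comp} together with the completeness of that invariant from Lemma \ref{lem:aux-for-Cn}), and surjectivity from the Swallowing Lemma, with the boxed $1$ guaranteeing the tail is never empty and hence killing the extra fixed class $(\xi_0\RRR 1)$ that appears for plain $\BB_n$. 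The only wording I would tighten is ``the manipulation \dots is local to the right-hand end'': when the tail's sole component is the boxed $1$ itself, the rightmost component sits at the far left and must be expanded across the whole chain before the ghost can be swallowed, so the operation is not local; it is, however, exactly what the proof of Lemma \ref{lem:Bn-shortroot-shrink} prescribes, and the boxed vertex causes no obstruction since only vertices strictly to its right are ever modified. With that caveat, the chain $\Cls{\BB_{n+1}^{(1)}}=\Cls{\AA_n^{(1)}}=\Cls{\AA_n^{(n)}}=1+\lceil (n-1)/2\rceil$ and the identification of the reflected representatives $\boe\ll\eta_i^n\RRR 0$ (with $1+i$ components, the leftmost $1$ of $\eta_i^n$ staying at position $n-2i+2\ge 2$) are both fully justified.
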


From section \ref{sec:Bn} we have that $\Cls{\BB_{n+1}} = 2 + \left\lceil \frac{(n+1)-1}{2} \right\rceil = 2 + \lceil n/2 \rceil$.

\begin{proposition}
For $\XX_{n}$ we can take as \minreps
\[
0 \RRR \xi_0 \RRR 1 \ , \quad 0 \RRR \xi_0 \RRR 0 , \quad 0 \RRR \xi_1 \RRR 0 \ , \quad ...  ,  \quad 0 \RRR \xi_r \RRR 0
\]
where
\[
r = \left\lceil \frac{(n+1)-1}{2} \right\rceil = \left\lceil \frac{n}{2} \right\rceil  =
\begin{cases} k & \mbox{ if } n=2k \ , \\ k+1 & \mbox{ if } n=2k+1 \ , \end{cases}
\]
and
\[
1 \RRR \eta_0^n \RRR 0 , \quad 1 \RRR \eta_1^n \RRR 0 , \quad ... \quad 1 \RRR \eta_k^n \RRR 0 , \quad
\]
where $k = \lceil (n-1)/2 \rceil$. In total, $\Cls{\XX_{n+2}} = n+3$.
\end{proposition}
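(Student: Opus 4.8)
The plan is to deduce the entire statement from the bijection of Lemma \ref{lem:X-bijection}, namely $\Cl(\XX_{n}) \cong \Cl(\BB_{n+1}) \sqcup \Cl(\BB_{n+1}^{(1)})$, by transporting back to $\XX_n$ the minimal representatives already computed for the two factors. First I would collect those representatives. Applying Corollary \ref{cor:Bn-reps} with $n$ replaced by $n+1$, the classes of $\BB_{n+1}$ have minimal representatives $(\xi_0 \RRR 1), (\xi_0 \RRR 0), (\xi_1 \RRR 0), \dots, (\xi_r \RRR 0)$ with $r = \lceil n/2 \rceil$, the $\xi_i$ living on the tail $\AA_n$. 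By Proposition \ref{prop:Bn^(1)-reps} the classes of $\BB_{n+1}^{(1)}$ have minimal representatives $\boe \ll \eta_0^n \RRR 0, \dots, \boe \ll \eta_k^n \RRR 0$ with $k = \lceil (n-1)/2 \rceil$. Under the inverse of the map $\varphi$ of Lemma \ref{lem:X-bijection}, a $\BB_{n+1}$-labeling $(b_1 \dots b_n \RRR b_{n+1})$ is sent to $(0 \RRR b_1 \dots b_n \RRR b_{n+1})$ and a $\BB_{n+1}^{(1)}$-labeling $(\boe \ll b_1 \dots b_n \RRR b_{n+1})$ to $(1 \RRR b_1 \dots b_n \RRR b_{n+1})$; this produces exactly the two lists in the statement, the first carrying the prefix $0\RRR$ and the second the prefix $1\RRR$.

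Next I would justify that these transported labelings are genuinely \emph{minimal}. The key observation is that vertex $0$ is a longer vertex, so $\T_0$ is the identity (cf.\ Section \ref{subsec:Cn}); hence $b_0$ is constant on each class of $\XX_n$, and every class sits entirely over $\BB_{n+1}$ (when $b_0 = 0$) or entirely over $\BB_{n+1}^{(1)}$ (when $b_0 = 1$). In the first case $\varphi$ preserves the number of $1$'s exactly, so minimality is inherited directly. In the second case the number of $1$'s of an $\XX_n$-labeling equals $1$ plus the number of $1$'s in the tail, the forced $1$ at vertex $0$ playing precisely the role of the boxed $1$; since that $1$ is common to every member of the class, minimizing over the $\XX_n$-class coincides with minimizing over the corresponding $\BB_{n+1}^{(1)}$-class. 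Thus both lists remain minimal after transport.

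Finally I would do the count. By Lemma \ref{lem:X-bijection}, $\Cls{\XX_n} = \Cls{\BB_{n+1}} + \Cls{\BB_{n+1}^{(1)}}$; inserting $\Cls{\BB_{n+1}} = 2 + \lceil n/2 \rceil$ (recalled just before the statement) and $\Cls{\BB_{n+1}^{(1)}} = 1 + \lceil (n-1)/2 \rceil$ (Proposition \ref{prop:Bn^(1)-reps}) gives
\[
\Cls{\XX_n} = 3 + \left\lceil \frac{n}{2} \right\rceil + \left\lceil \frac{n-1}{2} \right\rceil = n+3 ,
\]
the last equality checked by splitting into $n=2k$ and $n=2k+1$. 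Because the two summands in the bijection are disjoint, the two lists together form a complete, irredundant set of minimal representatives. I expect no genuine obstacle here: the only care needed is the index bookkeeping under the shift $n \mapsto n+1$ and the verification that minimality passes through $\varphi$, which follows from the invariance of $b_0$ on each class.
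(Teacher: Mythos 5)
Your proposal is correct and follows essentially the same route as the paper: both deduce the representatives by transporting the lists of Corollary \ref{cor:Bn-reps} (with $n$ shifted to $n+1$) and Proposition \ref{prop:Bn^(1)-reps} through the bijection of Lemma \ref{lem:X-bijection}, and both obtain the count $n+3$ by adding $\Cls{\BB_{n+1}}$ and $\Cls{\BB_{n+1}^{(1)}}$ case by case on the parity of $n$. Your extra paragraph checking that minimality survives the transport (because $\T_0$ is the identity, so $b_0$ is constant on each class) is a detail the paper leaves implicit, but it is a correct and welcome addition rather than a divergence.
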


\begin{proof}
The assertion about the representatives follows from Lemma \ref{lem:X-bijection}, Corollary \ref{cor:Bn-reps} and Proposition \ref{prop:Bn^(1)-reps}. Then by Lemma \ref{lem:X-bijection} we have
\begin{eqnarray*}
\Cls{\XX_{n}} & = & \Cls{\BB_{n+1}} + \Cls{\BB_{n+1}^{(1)}} = \\
& = &
\begin{cases}
k+2 + (1+k) = 2k+3  = n+3 & \mbox{ if } n=2k \ , \\
k+3 + (1+k) = 2k+4  = n+3 & \mbox{ if } n=2k+1
\end{cases}
= n+3  .
\end{eqnarray*}
\end{proof}

\begin{example}
For $n=1$ the diagram is
\[
\XX_1 = ( \  \sxymatrix{ \bc{0} \arr & \bc{1} \arr & \bc{2} } \ )
\]
and we have 4 classes: the fixed labeling $[0] = \{ (0\RRR0\RRR0) \}$, the fixed labeling $[\ell_1] = \{  (0\RRR0\RRR1) \}$, the class
$ \{ (0\RRR1\RRR0), \ (0\RRR1\RRR1) \}$ and the class \[ \{ (1\RRR0\RRR0), \  (1\RRR1\RRR0), \ (1\RRR1\RRR1), \ (1\RRR0\RRR1) \} \ . \]
We see that indeed $\Cls{\XX_1} = 1 + 3 = 4$.
\end{example}

\subsection{The 4-edged diagram}

Here we consider the diagram denoted by $A_{2}^{(2)}$ in \cite[Table 6]{OV}. The diagram is
\begin{equation*}
\sxymatrix{ \bc{1} \ar@{=>}[r]^{4} & \bc{2} }
\end{equation*}
and by Definition \ref{def:elem-moves-nonsimple} we see it behaves the same as $\BB_2$. Thus the classes are $\{ 0>0 \}$, $\{ 0>1 \}$ and $\{ (1>0 , 1>1) \}$ (here $a_1 > a_2$ denotes $a_1 \overset{4}{\RRR} a_2$).

\subsection{The diagram of longer vertex and opposite leafs}
\label{sec:Yn}

Here we consider the diagram denoted by $A_{2\ell-1}^{(2)}$ in \cite[Table 6]{OV}. We shell denote it by $\YY_{n}$ and recall it has $n+1$ vertices. The diagram is
\begin{equation*}
\sxymatrix{ \bc{n-1} \rline & \bc{n-2} \dline \rline & \cdots & \lline \bc{1} & \all  \bc{0} \\ & \bcu{n} & & & }
\end{equation*}
We shall denote a labeling $\dd = (\frac{d_{n-1}}{d_n}d_{n-2}  ...  d_1 \LLL d_{0})$.

\begin{lemma} \label{lem:Y-bijection}
For $X_{n}$ we have $\Cl(\YY_{n}) \cong \Cl(\DD_{n}) \sqcup \Cl(\DD_{n}^{(1)})$ where
\[
D_{n}^{(1)} = \sxymatrix{ \boxone \rline & \bc{1} \rline & \cdots & \lline \bc{n-2} \dline \rline & \bc{n-1}
                        \\               &               &        &       \bcu{n}                 &        }
\]
\end{lemma}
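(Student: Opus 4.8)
The plan is to mimic the argument for $\CC_n$ (Lemma~\ref{lem:Cn}) almost verbatim, with the longer end vertex $0$ of $\YY_n$ playing the role of the invariant long vertex there. First I would record that the label $d_0$ is an invariant of each orbit: vertex $0$ is a longer vertex whose unique neighbour is the shorter vertex $1$, joined by a double (hence even) edge, so by \eqref{eq:move-nonsimple} the sum defining $\T_0$ runs over the empty set and $\T_0(\dd)=\dd$; no other move touches $d_0$. Consequently any two equivalent labelings share the same value of $d_0$, and $L(\YY_n)$ splits as a disjoint union according to $d_0\in\{0,1\}$, each block being stable under all moves. This is exactly the phenomenon used in subsection~\ref{subsec:Cn}.

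Next I would define the label-set map $\varphi\colon L(\YY_n)\to L(\DD_n)\sqcup L(\DD_n^{(1)})$ by forgetting vertex $0$ while remembering its value. If $d_0=0$, send $\dd=(\frac{d_{n-1}}{d_n}d_{n-2}\cdots d_1\LLL 0)$ to the $\DD_n$-labeling carried by the vertices $\{n-1,\dots,1,n\}$; if $d_0=1$, send it to the same labeling with a boxed $1$ adjoined at vertex $1$, i.e. to the corresponding labeling of $\DD_n^{(1)}$. Here I use that the subgraph on $\{n-1,\dots,1,n\}$ is a $\DD_n$ diagram (a line $n-1,\dots,1$ with $n$ attached to $n-2$), once one performs the left--right reflection that matches the drawing of $\DD_n^{(1)}$ in the statement. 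This $\varphi$ is manifestly a bijection of label sets, with inverse replacing the boxed (resp. absent) end vertex by a longer vertex carrying $d_0=1$ (resp. $d_0=0$).

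The core step is to verify that $\varphi$ is move-compatible, so that the induced map $\varphi_*([\dd])=[\varphi(\dd)]$ is a well-defined bijection on classes. The only move affected by vertex $0$ is $\T_1$, whose rule in $\YY_n$ reads $d_1'=d_1+d_2+d_0$. When $d_0=0$ this is precisely the $\DD_n$ move at the leaf $1$, and when $d_0=1$ it is precisely the move at vertex $1$ of $\DD_n^{(1)}$, the boxed $1$ supplying the extra summand. Every other move $\T_i$ with $i\ge 2$ involves only vertices of $\{n-1,\dots,1,n\}$ and agrees verbatim with the corresponding $\DD_n$ (resp. $\DD_n^{(1)}$) move, while $\T_0$ is the identity and has no counterpart on the target. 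Hence $\dd\Wsim{Y}\dd'\iff\varphi(\dd)\sim\varphi(\dd')$ inside each $d_0$-block, and since $d_0$ is an invariant the two blocks map onto the two summands separately.

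I expect the only real friction to be orientation bookkeeping: one must confirm that adjoining the boxed $1$ at vertex $1$ of the $\{n-1,\dots,1,n\}$-diagram reproduces $\DD_n^{(1)}$ as drawn (boxed $1$ opposite the fork at $n-2$), which is just the left--right reflection of $\YY_n$, and that the $\T_1$ rule matches the boxed-$1$ convention used for the auxiliary diagrams. Granting this routine check, $\varphi_*$ is a bijection and $\Cl(\YY_n)\cong\Cl(\DD_n)\sqcup\Cl(\DD_n^{(1)})$ follows, exactly as in the $\CC_n$ case.
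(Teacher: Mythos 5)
Your proposal is correct and follows essentially the same route as the paper: the paper likewise splits $L(\YY_n)$ by the invariant label $d_0$ of the longer end vertex, sends the $d_0=0$ block to $L(\DD_n)$ and the $d_0=1$ block to $L(\DD_n^{(1)})$ with a boxed $1$ adjoined, and cites the $\CC_n$ argument for the well-definedness and bijectivity of the induced map on classes. In fact your write-up supplies more of the move-compatibility details (the $\T_1$ rule and the triviality of $\T_0$) than the paper's very brief proof does.
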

\begin{proof}
The map $\varphi : L(\YY_{n}) \to L(\DD_{n}) \sqcup L(\DD_{n}^{(1)})$ defined by
\[
\varphi(\frac{d_{n-1}}{d_n}d_{n-2}  ...  d_1 \LLL d_{0}) = \begin{cases}
(\frac{d_{n-1}}{d_n}d_{n-2}  ...  d_1) \in L(\DD_{n}) & \mbox{ if } d_0 = 0 , \\
(\frac{d_{n-1}}{d_n}d_{n-2}  ...  d_1 \ll \boe) \in L(\DD_{n}^{(1)}) & \mbox{ if } d_0 = 1 .
\end{cases}
\]
is clearly a bijection. The quotient map $$\varphi_* : \Cl(\YY_{n}) \to \Cl(\DD_{n}) \sqcup \Cl(\DD_{n}^{(1)})$$ is well-defined and a bijection as well (cf. subsection \ref{subsec:Cn}).
\end{proof}

We now state a result from \cite{BE-real} about the diagram $\DD_{n}^{(1)}$ which is a special cases of $\DD_{n}^{(m)}$ with $m=1$.

\begin{proposition} \label{prop:Dn^(1)-reps}
For \minreps of $\DD_{n}^{(1)}$ we can take:

If $n=2k$ is even
\[
\boe \ll 0\xi_0^{n-3}\frac{0}{0} , \quad \boe \ll 0 \xi_1^{n-3}\frac{0}{0} , \quad ... \quad \boe \ll 0 \xi_{k-1}^{n-3}\frac{0}{0} ,
\]
If $n=2k+1$ is odd
\[
\boe \ll 0\xi_0^{n-3}\frac{0}{0} , \quad \boe \ll 0 \xi_1^{n-3}\frac{0}{0} , \quad ... \quad \boe \ll 0 \xi_{k-1}^{n-3}\frac{0}{0} , \quad
\boe \ll 0 \xi_{k-1}^{n-3}\frac{1}{0} , \quad \boe \ll 0 \xi_{k-1}^{n-3}\frac{0}{1}
\]

Thus, the number of classes is
\begin{equation}
\Cls{\DD_{n}^{(1)}} = \begin{cases}  k & \mbox{ if } \ n=2k , \\ k+2 & \mbox{ if } \ n=2k+1 \ . \end{cases}
\end{equation}
\end{proposition}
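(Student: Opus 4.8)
The plan is to mirror the treatment of $\DD_n$ in Proposition \ref{prop:orbits-Dn}, with the tail $\AA$-diagram replaced by its boxed version $\AA_{n-2}^{(1)}$ so as to record the permanent $1$ sitting at the end of the neck. Writing a labeling of $\DD_n^{(1)}$ as $\boe \ll \aa \frac{\vk}{\lambda}$, where $\aa \in L(\AA_{n-2})$ occupies the neck vertices $1,\dots,n-2$ and $\vk = d_{n-1}$, $\lambda = d_n$ are the fork leaves, I would introduce
\[
\varphi \colon L(\AA_{n-2}^{(1)}) \longrightarrow L(\DD_n^{(1)}), \qquad \boe \ll \aa \longmapsto \boe \ll \aa \tfrac{0}{0},
\]
and the induced map $\varphi_*([\,\boe\ll\aa\,]) = [\,\boe\ll\aa\tfrac{0}{0}\,]$ on classes. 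That $\varphi_*$ is well defined is immediate, since with the fork empty the degree-$3$ move $\T_{n-2}$ acts on the neck exactly as the ordinary $\AA$-move; hence $\boe\ll\aa \sim \boe\ll\aa'$ forces $\varphi(\boe\ll\aa)\sim\varphi(\boe\ll\aa')$. For injectivity I would argue as in Proposition \ref{prop:orbits-Dn}: if $\boe\ll\aa\tfrac{0}{0}\sim\boe\ll\aa'\tfrac{0}{0}$ then the two fork-empty labelings carry the same number of components (the fork leaves are degree $1$ and can only be swallowed at $n-2$, so any split at the fork must be undone to return to a fork-empty labeling, and the neck count is restored), whence $\boe\ll\aa$ and $\boe\ll\aa'$ have equal component count \emph{including} the boxed $1$, and Lemma \ref{lem:aux-for-Cn} gives $\boe\ll\aa\sim\boe\ll\aa'$ in $\AA_{n-2}^{(1)}$.

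The heart of the proof is the analogue of Lemma \ref{lem:Dn-basics2}: every labeling that is not fixed can be moved to one with $d_{n-1}=d_n=0$, i.e.\ lies in $\Img\varphi_*$. I would run the same case analysis on $(\vk,\lambda)$ and $d_{n-2}$ as for $\DD_n$: if $d_{n-2}=1$ the fork $1$'s are swallowed directly by $\T_{n-1},\T_n$; if $d_{n-2}=0$ and $\vk=\lambda=1$ one pushes a neck component to $d_{n-3}$, unsplits at $n-2$, and swallows; if $\vk=1,\lambda=0$ one must transport the leaf $1$ into the neck, which is possible unless the neck is maximally alternating and blocks the spot $d_{n-3}$. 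The one genuinely new feature, and the step I expect to be the main obstacle, is that the boxed $1$ makes $\T_1$ flip $d_1$ precisely when $d_2=0$, so the tail is never truly frozen: configurations that are ``stuck'' for plain $\DD_n$ can now be unlocked by first firing $\T_1$ to inject a component, and in particular the empty-neck cases reduce as well. I would make this precise by solving the fixed-point equations directly: a labeling is fixed iff $d_2=1$, $d_{n-2}=0$, $d_{i-1}=d_{i+1}$ for $2\le i\le n-3$, and $d_{n-3}+\vk+\lambda=0$. These force the alternating neck $\boe\ll 0\,\xi_{k-1}^{n-3}$ (the relation $d_{n-2}=0$ propagating down the odd indices pins $d_1=0$), which is consistent with $d_{n-2}=0$ only when $n$ is odd; then $d_{n-3}=1$ forces $\vk+\lambda=1$, yielding exactly the two fixed labelings $\boe\ll 0\,\xi_{k-1}^{n-3}\tfrac{1}{0}$ and $\boe\ll 0\,\xi_{k-1}^{n-3}\tfrac{0}{1}$. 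For $n$ even the system is inconsistent, so there are no nontrivial fixed labelings and the reduction always succeeds.

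With the reduction lemma in hand, the image computation is routine: each fixed labeling is its own class and is visibly not of the form $\boe\ll\aa\tfrac{0}{0}$, while every other class meets $\Img\varphi_*$. Hence
\[
\Cl(\DD_n^{(1)}) \;\cong\; \Cl(\AA_{n-2}^{(1)}) \;\sqcup\; F,
\]
where $F$ is the set of fixed classes, of size $0$ for $n$ even and $2$ for $n$ odd. Finally I would insert the count from Proposition \ref{prop:aux-Cn}, namely $\Cls{\AA_{n-2}^{(1)}}=\left\lceil\frac{n-3}{2}\right\rceil+1$, which equals $k$ in both parities; adding $|F|$ gives $\Cls{\DD_n^{(1)}}=k$ for $n=2k$ and $k+2$ for $n=2k+1$. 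The representatives $\boe\ll 0\,\xi_i^{n-3}\tfrac{0}{0}$, distinguished by their component count $i+1$ (the boxed $1$ being an isolated component since $d_1=0$), together with the two fixed labelings in the odd case, are then exactly the ones listed in the statement.
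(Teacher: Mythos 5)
Your argument is correct, and there is nothing in the paper to compare it against line by line: the paper does not prove Proposition~\ref{prop:Dn^(1)-reps} at all, it merely quotes it from \cite{BE-real} as the $m=1$ case of $\DD_n^{(m)}$. What you have done is transplant the paper's own template for $\DD_n$ (Proposition~\ref{prop:orbits-Dn} plus the reduction Lemma~\ref{lem:Dn-basics2}) with the tail $\AA_{n-2}$ replaced by $\AA_{n-2}^{(1)}$, and this is exactly the right move: $\Cls{\AA_{n-2}^{(1)}}=\lceil(n-3)/2\rceil+1=k$ in both parities, so the decomposition $\Cl(\DD_n^{(1)})\cong\Cl(\AA_{n-2}^{(1)})\sqcup F$ gives $k$ resp.\ $k+2$. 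The genuinely new content is your fixed-point computation, and it checks out: the system $d_2=1$, $d_{i-1}=d_{i+1}$ ($2\le i\le n-3$), $d_{n-2}=0$, $d_{n-3}+\vk+\lambda=0$ is inconsistent for $n$ even (the even-index chain forces $d_{n-2}=d_2=1$) and for $n$ odd pins down exactly $\boe\ll 0\hs\xi_{k-1}^{n-3}\frac{1}{0}$ and $\boe\ll 0\hs\xi_{k-1}^{n-3}\frac{0}{1}$; this also correctly explains why the fixed labelings of plain $\DD_n$ disappear (the boxed $1$ lets $\T_1$ fire whenever $d_2=0$). Two steps sit at the paper's level of rigor rather than above it. First, your injectivity argument rests on the claim that equivalent fork-empty labelings have the same component count; if you want this airtight, exhibit the invariant $N(\dd)-2\,[\,d_{n-2}=0 \mbox{ and } d_{n-1}=d_n=1\,]$, which is preserved by every move (the only count-changing move is $\T_{n-2}$ with all three neighbors equal to $1$, and it toggles the correction term in step with the $\pm2$ jump) and equals $N$ on fork-empty labelings. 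Second, ``not fixed, hence reducible to $d_{n-1}=d_n=0$'' is asserted from the fixed-point equations rather than derived move by move; this is the same leap the paper takes in Lemma~\ref{lem:Dn-basics2}, so it is acceptable here, but it is the place a referee would push.
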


From section \ref{sec:Dn} we have that $\Cls{\DD_{n}} = \begin{cases}  k+3 & \mbox{ if } \ n=2k , \\ k+2 & \mbox{ if } \ n=2k+1 \ . \end{cases}$.

\begin{proposition}
For $\YY_{n}$ we can take as \minreps the inverse images under $\varphi$ of the labeling of Proposition \ref{prop:Dn^(1)-reps} and Corollary \ref{cor:Dn-reps}.
In total, $\Cls{\YY_n} = n+3$.
\end{proposition}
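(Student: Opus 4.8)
The plan is to reduce the whole statement to the two diagrams $\DD_n$ and $\DD_n^{(1)}$, whose solutions are already in hand. By Lemma \ref{lem:Y-bijection} the induced map $\varphi_*$ is a bijection
\[ \Cl(\YY_n) \isoto \Cl(\DD_n) \sqcup \Cl(\DD_n^{(1)}), \]
so it suffices to add the two summands and to pull back their representatives. First I would record $\Cls{\YY_n} = \Cls{\DD_n} + \Cls{\DD_n^{(1)}}$ and substitute the counts quoted just above the statement: from Section \ref{sec:Dn}, $\Cls{\DD_n}$ equals $k+3$ for $n=2k$ and $k+2$ for $n=2k+1$; from Proposition \ref{prop:Dn^(1)-reps}, $\Cls{\DD_n^{(1)}}$ equals $k$ for $n=2k$ and $k+2$ for $n=2k+1$. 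Adding these parity by parity gives $(k+3)+k = 2k+3 = n+3$ when $n=2k$ and $(k+2)+(k+2) = 2k+4 = n+3$ when $n=2k+1$, so $\Cls{\YY_n} = n+3$ in both cases.

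For the representatives I would use that $\varphi_*$ is induced by the \emph{explicit} bijection $\varphi$ of Lemma \ref{lem:Y-bijection}, whose inverse simply restores the forced label $d_0$. A class of $\DD_n$, with minimal representative taken from Corollary \ref{cor:Dn-reps}, pulls back by appending $d_0=0$; a class of $\DD_n^{(1)}$, with minimal representative from Proposition \ref{prop:Dn^(1)-reps}, pulls back by appending $d_0=1$, the boxed $1$ of $\DD_n^{(1)}$ corresponding exactly to the longer vertex $0$ of $\YY_n$. Since $\varphi_*$ is a bijection, the resulting labelings constitute a complete and irredundant system of class representatives, which is precisely the collection of inverse images under $\varphi$ asserted in the statement.

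The only point needing care is minimality, which I expect to be the main (mild) obstacle. Here I would invoke that vertex $0$ of $\YY_n$ is a longer vertex: its single neighbor is joined by a double (even) edge, so by rule \eqref{eq:move-nonsimple} that neighbor is excluded from the sum and $d_0$ is fixed under every move, exactly as for the long vertex in $\CC_n$ (Section \ref{subsec:Cn}). Consequently minimizing the number of $1$'s within a class of $\YY_n$ never alters $d_0$ and is carried out entirely on the remaining vertices; this is exactly the minimization already solved on the $\DD_n$ side (when $d_0=0$) or on the $\DD_n^{(1)}$ side (when $d_0=1$, where the boxed $1$ already accounts for the forced $1$ at vertex $0$). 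Hence the representatives of Corollary \ref{cor:Dn-reps} and Proposition \ref{prop:Dn^(1)-reps}, being minimal there, pull back to minimal representatives for $\YY_n$, which completes the proof.
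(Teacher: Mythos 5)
Your proposal is correct and follows essentially the same route as the paper: invoke the bijection $\varphi_*$ of Lemma \ref{lem:Y-bijection}, add $\Cls{\DD_n}$ and $\Cls{\DD_n^{(1)}}$ parity by parity to get $n+3$, and pull back the representatives of Corollary \ref{cor:Dn-reps} and Proposition \ref{prop:Dn^(1)-reps} through $\varphi^{-1}$. Your extra paragraph justifying minimality (via the invariance of $d_0$ under all moves) is a detail the paper leaves implicit, but it is a correct elaboration rather than a different argument.
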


\begin{proof}
The assertion about the representatives follows from Lemma \ref{lem:Y-bijection}, Corollary \ref{cor:Dn-reps} and Proposition \ref{prop:Dn^(1)-reps}. Then by Lemma \ref{lem:Y-bijection} we have
\begin{eqnarray*}
\Cls{\YY_{n}} & = & \Cls{\DD_{n}} + \Cls{\DD_{n}^{(1)}} = \\
& = &
\begin{cases}
(k+3) + k = 2k+3  = n+3 & \mbox{ if } n=2k \ , \\
(k+2) + (k+2) = 2k+4  = n+3 & \mbox{ if } n=2k+1
\end{cases}
= n+3  .
\end{eqnarray*}
\end{proof}

\subsection{The double-short diagram}
\label{sec:Zn}

Here we consider the diagram denoted by $D_{\ell+1}^{(2)}$ in \cite[Table 6]{OV}. We shell denote it by $\ZZ_{n}$ and recall it has $n+1$ vertices ($n \ge 2$). The diagram is
\begin{equation*}
\sxymatrix{ \bc{0} & \all \bc{1} \rline & \cdots & \lline \bc{n-1} \arr & \bc{n}  }
\end{equation*}
We shall denote a labeling by $\bb = (b_0\LLL b_1 ... b_{n-1} \RRR b_n)$. Recall that the longer vertices do not see the shorter vertices $0$ and $n$.

\begin{remark}
The following labelings are fixed labelings:
\[
\ell_0 = (0\LLL0...0\RRR0) \ , \quad \ell_l = (1\LLL0...0\RRR0) \ , \quad \ell_r = (0\LLL0...0\RRR1) \ , \quad \ell_c = (1\LLL0...0\RRR1) \ .
\]
\end{remark}

\begin{lemma} \label{lem:Z-bijection}
Consider the map $\varphi : L(\AA_{n-1}) \to L(\ZZ_n)$ defined by
\[ L(\AA_{n-1}) \ni \aa \longmapsto ( 0\LLL\aa\RRR 0 ) \ . \]

Then the quotient map
$\varphi_* : \Cl(\AA_{n-1}) \to \Cl(\ZZ_n)$ is well-defined, injective, and its image is
\[ \Img \varphi_* = \Cl(\ZZ_n) \smallsetminus \{ [\ell_l], [\ell_r], [\ell_c] \} \ . \]
\end{lemma}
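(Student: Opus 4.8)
The plan is to exploit the feature that the two shorter leaves $0$ and $n$ are \emph{not} seen by their longer neighbours $1$ and $n-1$, so that the interior $\AA_{n-1}$ (vertices $1,\dots,n-1$) evolves under the moves of $\ZZ_n$ as an autonomous $\AA_{n-1}$-system. Concretely, for $1\le i\le n-1$ the move $\T_i$ of $\ZZ_n$ changes only $b_i$, and since vertex $1$ does not see vertex $0$ and vertex $n-1$ does not see vertex $n$, its effect on the interior coincides with the $\AA_{n-1}$-move $\T_i^A$; meanwhile $\T_0$ and $\T_n$ alter only the leaf labels $b_0,b_n$ and fix the interior. Thus every sequence of $\ZZ_n$-moves projects to a sequence of $\AA_{n-1}$-moves on the interior (the two leaf-moves projecting to the identity), and conversely every $\AA_{n-1}$-move lifts to the identically-indexed $\ZZ_n$-move.

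First I would deduce well-definedness and injectivity simultaneously from this correspondence, so that no auxiliary invariant is needed. For well-definedness, if $\aa \Wsim{A} \aa'$ I lift the realizing move sequence: applied to $\varphi(\aa)=(0\LLL\aa\RRR0)$ the leaves start at $0$ and are never touched by interior moves, so the sequence carries $\varphi(\aa)$ to $\varphi(\aa')$. For injectivity, if $\varphi(\aa)\sim\varphi(\aa')$ in $\ZZ_n$ I project the realizing move sequence to the interior; since the interiors of $\varphi(\aa)$ and $\varphi(\aa')$ are exactly $\aa$ and $\aa'$, I conclude $\aa \Wsim{A} \aa'$, whence $[\aa]=[\aa']$.

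Next I would compute the image. The three labelings $\ell_l,\ell_r,\ell_c$ have $b_0=1$ or $b_n=1$ and are fixed (a direct check, as noted just before the lemma), so each is its own class and cannot be equivalent to any $\varphi(\aa)$, which always has $b_0=b_n=0$; hence $[\ell_l],[\ell_r],[\ell_c]\notin\Img\varphi_*$. For the reverse inclusion I take $\bb$ with $[\bb]\notin\{[\ell_l],[\ell_r],[\ell_c]\}$ and clear both leaves. If the interior is nonzero I apply the Swallowing Lemma (Lemma \ref{lem:Bn-shortroot-shrink}) at each end: using interior moves (which are $\AA_{n-1}$-moves) I rearrange the interior to $\xi_r$ so that $b_1=1$ and apply $\T_0$ to swallow $b_0$, then rearrange to $\eta_r$ so that $b_{n-1}=1$ and apply $\T_n$ to swallow $b_n$; since interior moves never disturb the already-cleared leaves, this gives $\bb\sim(0\LLL\aa''\RRR0)=\varphi(\aa'')$. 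If the interior is zero then $\bb\in\{\ell_0,\ell_l,\ell_r,\ell_c\}$, and the hypothesis forces $\bb=\ell_0=\varphi(0)$. This establishes $\Img\varphi_*=\Cl(\ZZ_n)\smallsetminus\{[\ell_l],[\ell_r],[\ell_c]\}$.

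The main obstacle to watch for is the temptation to prove injectivity by claiming that the number of components of the whole labeling is invariant, which is \emph{false} here: because vertex $1$ does not see vertex $0$, the move $\T_1$ applied to $1\LLL 0\,1\cdots$ merges the leaf $0$ into the interior component and lowers the total count. The projection-to-interior viewpoint sidesteps this entirely, since the genuinely invariant quantity is the number of components of the interior $\AA_{n-1}$, to which Theorem \ref{th:basic-An} applies. I would also stress that the two swallowings in the image step are independent precisely because $\T_0,\T_n$ and the interior moves act on disjoint vertex sets, so clearing one leaf can never resurrect the other.
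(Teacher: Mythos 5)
Your proof is correct and follows essentially the same route as the paper: well-definedness and injectivity come from the fact that the interior $\AA_{n-1}$ evolves autonomously, and the image is computed by checking that $\ell_l,\ell_r,\ell_c$ are fixed and applying the Swallowing Lemma at both short leaves to clear $b_0$ and $b_n$. Your projection-of-move-sequences argument for injectivity is in fact slightly more careful than the paper's terse appeal to ``the same number of components'' (which, as you rightly flag, is an invariant only of the interior, not of the whole $\ZZ_n$ labeling), but the underlying idea is identical.
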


\begin{proof}
Clearly $\varphi_*$ is well-defined, if $\aa \sim \aa'$ then clearly $(0\LLL\aa\RRR0) \sim (0\LLL\aa'\RRR0)$. It is also clearly injective: assume $\varphi_*([\aa]) = \varphi_*([\aa'])$. Then $(0\LLL\aa\RRR0) \sim (0\LLL\aa'\RRR0)$. This implies that $\aa$ and $\aa'$ have the same number of components and hence by Theorem \ref{th:basic-An} $\aa \sim \aa'$.

The fixed labelings $\ell_l$, $\ell_r$ and $\ell_c$ are not in the image of $\varphi$ and since each of them forms its own class, the classes $[\ell_l]$, $[\ell_r]$ and $[\ell_c]$ are not in $\Img \varphi_*$. It remain to show that any other labeling is equivalent to a labeling that is in the image of $\varphi$, this will show that all the other classes are in $\Img \varphi_*$. The assertion then follows from the Swallowing Lemma (Lemma \ref{lem:Bn-shortroot-shrink}), applied to the two shorter vertices, since if $\bb \ne \ell_l , \ell_r, \ell_c$ then it must have at least 1 component in the vertices $1$ to $n-1$.
\end{proof}

\begin{proposition}
For $\ZZ_n$ we can take as \minreps the labelings
\[ \ell_l, \ \ell_r , \ \ell_c , \ \ell_0 = (0\LLL \xi_0^{n-1} \RRR 0) \ , \quad  (0\LLL \xi_1^{n-1} \RRR 0) \ , \quad ...
\quad  (0\LLL \xi_k^{n-1} \RRR 0) \]
where $k = \lceil (n-1)/2 \rceil$. The number of classes is
\[
\Cls{\ZZ_n} = 3 + (1 + \left\lceil \frac{n-1}{2} \right\rceil) = k + 4 \ .
\]
\end{proposition}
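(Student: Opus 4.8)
The plan is to deduce the entire proposition directly from Lemma \ref{lem:Z-bijection}, which has already done the substantial work. That lemma provides a well-defined, injective quotient map $\varphi_* : \Cl(\AA_{n-1}) \to \Cl(\ZZ_n)$ whose image is exactly $\Cl(\ZZ_n) \smallsetminus \{[\ell_l],[\ell_r],[\ell_c]\}$. Hence $\varphi_*$ is a bijection onto that complement, and first I would record the resulting disjoint decomposition
\[
\Cl(\ZZ_n) = \Img \varphi_* \sqcup \{[\ell_l],[\ell_r],[\ell_c]\} \cong \Cl(\AA_{n-1}) \sqcup \{[\ell_l],[\ell_r],[\ell_c]\} \ ,
\]
where the three fixed labelings each form a singleton class (so the union is genuinely disjoint). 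Counting immediately gives $\Cls{\ZZ_n} = \Cls{\AA_{n-1}} + 3$.

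Next I would substitute the known count for $\AA_{n-1}$ from Corollary \ref{cor:An-reps}, namely $\Cls{\AA_{n-1}} = \left\lceil \frac{n-1}{2} \right\rceil + 1 = k+1$ with $k = \left\lceil \frac{n-1}{2} \right\rceil$. This yields
\[
\Cls{\ZZ_n} = (k+1) + 3 = k+4 = 3 + \left(1 + \left\lceil \tfrac{n-1}{2}\right\rceil\right) \ ,
\]
as claimed. For the list of representatives, I would transport the canonical minimal representatives $\xi_0^{n-1}, \xi_1^{n-1}, \dots, \xi_k^{n-1}$ of $\Cl(\AA_{n-1})$ (again from Corollary \ref{cor:An-reps}) through $\varphi$: each $\xi_i^{n-1}$ is sent to $(0\LLL \xi_i^{n-1} \RRR 0)$, and by the bijectivity of $\varphi_*$ these represent exactly the $k+1$ classes in $\Img \varphi_*$. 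Adjoining the three fixed labelings $\ell_l,\ell_r,\ell_c$ then gives the full list of $k+4$ representatives.

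The only point requiring any care — and the closest thing to an obstacle — is verifying minimality, i.e. that each listed representative really has the minimal number of $1$'s in its class. Here I would observe that appending the two short-vertex labels $0$ on each end adds no $1$'s, so a minimal representative of $[\aa]$ in $\AA_{n-1}$ remains minimal for $\varphi_*([\aa])$ in $\ZZ_n$; one uses that the short vertices can always be cleared without increasing the $1$-count, which is precisely the content of the Swallowing Lemma invoked in Lemma \ref{lem:Z-bijection}. The three fixed labelings $\ell_l,\ell_r,\ell_c$ are trivially minimal, being the unique elements of their singleton classes. All of this is routine once Lemma \ref{lem:Z-bijection} is in hand, so the proof is short and the real content lies in the already-established bijection.
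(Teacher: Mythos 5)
Your proof is correct and follows essentially the same route as the paper, which simply states that the proposition follows immediately from Lemma \ref{lem:Z-bijection}; you have merely spelled out the counting, the transport of the canonical representatives $\xi_i^{n-1}$ through $\varphi$, and the minimality check that the paper leaves implicit.
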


\begin{proof}
Follows immediately from Lemma \ref{lem:Z-bijection}.
\end{proof}

\bigskip

%
%
%
%
%
%

\subsection{The diagrams of type $\EE_6$, $\EE_7$ and $\EE_8$}

The results for the Dynkin diagrams of types $\EE_6$, $\EE_7$ and $\EE_8$ are well known. See e.g. \cite{BE-real}.

\subsection{The diagram of type $\tilde{\EE}_6$}
\label{sec:tilde-E6}

The affine Dynkin diagram  of $\tilde{\EE}_6$, denoted $\EE_6^{(1)}$ in \cite[Table 6]{OV}, is
\begin{equation*}
\sxymatrix{ \bc{1} \rline & \bc{2} \rline & \bc{3} \rline \dline & \bc{4} \rline & \bc{5}  \\
& & \bcu{6} \dline & & \\ & & \bcu{0}  & &}
\end{equation*}

\begin{proposition} \label{prop:tilde-E6}
The diagram $\tilde{\EE}_6$ has 4 classes. The classes are:
\begin{enumerate}
\item[1.] {\emm The class of zero} consisting of 0, which is a fixed labeling.
\item[2.] The class of the fixed labeling
\[ \sxymatrix{ 1 \rline & 0 \rline & 1 \rline \dline & 0 \rline & 1  \\
& & 0 \dline & & \\ & & 1  & &} \]
\item[3.] The class consisting of all labelings with $1$ or $3$  components. 
\item[4.] The class consisting of all labelings with $2$ components.
\end{enumerate}
\end{proposition}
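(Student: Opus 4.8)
The plan is to combine the parity invariant of Lemma~\ref{lem:vertex3} with a determination of the fixed labelings and a ``shrink and slide'' normalization of everything else. Note first that $\tilde{\EE}_6$ is a tree all of whose vertices have degree $1$, $2$ or $3$, the unique degree-$3$ vertex being the centre (joined to the three arms of length two). Hence by Lemma~\ref{lem:vertex3} the parity of the number of components is invariant, so every class lies either in the even stratum (with $0$, $2$ or $4$ components) or in the odd stratum (with $1$ or $3$ components). Moreover $4$ is the largest possible number of components: choosing one vertex per component produces an independent set, the independence number of $\tilde{\EE}_6$ is $4$, and the only independent set of size $4$ is $\{1,3,5,0\}$, which forces the unique $4$-component labeling to have $1$'s exactly at those four vertices.

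I would next find the fixed labelings by solving $A\aa=0$ over $\F_2$ for the adjacency matrix $A$, i.e. requiring that every vertex have an even number of neighbours labelled $1$. The three leaf equations give $a_2=a_4=a_6=0$, and the remaining equations force $a_1=a_3=a_5=a_0$; thus the solution space is one-dimensional and the only fixed labelings are $0$ (with $0$ components) and the labeling of item~(2) (with $1$'s exactly at $\{1,3,5,0\}$, hence $4$ components). Since no move alters a fixed labeling, each of these is a one-element class, giving classes~(1) and~(2); in particular there is no fixed labeling with $1$, $2$ or $3$ components, and nothing outside these two classes can be equivalent to $0$ or to item~(2).

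The core of the proof is to show, for each $r\in\{1,2,3\}$, that all labelings with exactly $r$ components are equivalent. I would first \emph{shrink} every component to a single vertex by repeatedly applying $\T_v$ at a ``tip'' of the component (a $1$-vertex with a single $1$-neighbour), which by Lemmas~\ref{lem:invartiant-comp} and~\ref{lem:vertex3} preserves connectedness and the total number of components, even for a component meeting the centre (peel from the arm ends inward, using Lemma~\ref{lem:basic-An} along the arms). This reduces an $r$-component labeling to the indicator of a size-$r$ independent set. I would then \emph{slide} each isolated $1$ to a chosen canonical position: a lone $1$ is moved from a vertex $v$ to an adjacent empty vertex $w$ by first applying $\T_w$ (legal since $v$ is the only $1$-neighbour of $w$) and then $\T_v$, and this step also works through the centre. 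Because the parity invariant forbids any single move from merging two components into one (that would change the count by one), no slide can accidentally drop the component count; the only real issue is that tokens can block one another, so the substance is to route the slides, exploiting the three-fold arm symmetry of $\tilde{\EE}_6$, so as to reach a single canonical $r$-component representative.

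Finally I would link the two odd strata by one explicit unsplitting at the centre: starting from the three-component labeling with $1$'s exactly at $\{2,4,6\}$, the move $\T_3$ fires (the centre then has three $1$-neighbours) and produces the labeling with $1$'s at $\{2,3,4,6\}$, which is connected and so has one component. Together with the previous paragraph this shows that all $1$- and $3$-component labelings lie in a single class, namely item~(3), whereas the $2$-component labelings form the separate class of item~(4); these two are distinct by parity, and neither meets the singleton classes~(1) and~(2), whose members are fixed. This accounts for every one of the $2^{7}$ labelings and yields exactly four classes. The main obstacle is the slide step of the third paragraph: making the token-routing argument fully rigorous (equivalently, establishing reachability of the canonical configuration, a token-sliding reconfiguration on the tree) is the one place where some genuine case analysis is unavoidable, and is where the symmetry of the diagram is most useful.
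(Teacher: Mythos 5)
Your outline is sound and everything you actually compute is correct: the parity invariant from Lemma~\ref{lem:vertex3} applies, the fixed labelings are exactly the solutions of $A\aa=0$ over $\F_2$ (namely $0$ and the labeling with $1$'s at $\{1,3,5,0\}$), that labeling is the unique one with $4$ components, and the single move $\T_3$ applied to the labeling with $1$'s at $\{2,4,6\}$ does link the $3$-component and $1$-component strata. But the core of the proposition --- that all non-fixed labelings with the same number of components ($r=1,2,3$) form a single class --- is precisely the step you defer to an unspecified ``token-routing'' case analysis, so as written the proposal is a plan rather than a proof. This is not a step that would fail, but it is the step that carries essentially all the difficulty, and the paper does not leave it to inspection: it is supplied by Lemma~\ref{lem:1-2-comps-are-classes} (non-fixed $1$- and $2$-component labelings each form one class), the Central Vertex Lemma~\ref{lem:central-vertex} and Clear Path Lemma~\ref{lem:clear-path}, and the induction on the number of components in Theorem~\ref{th:three-arms}, whose corollary covers $\tilde{\EE}_6$ directly. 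If you want to keep your route, you must either carry out the (finite, $2^7$-state) reachability check for $r=2,3$ explicitly, or invoke Theorem~\ref{th:three-arms}.

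One substantive inaccuracy in the sliding step: you claim the parity invariant guarantees that ``no slide can accidentally drop the component count.'' Parity only forbids a change of $\pm1$; a move at the degree-$3$ vertex can change the count by $\pm2$ (Proposition~\ref{prop:general-d-parity}). Concretely, from the $3$-component labeling with $1$'s at $\{2,4,6\}$, attempting to ``slide'' a token onto the centre fires $\T_3$ and merges all three components into one. This happens to be harmless for your item~(3) (since $1$ and $3$ components lie in the same class anyway), and for $r=2$ the centre move is either blocked or count-preserving, but it does invalidate your framing of the problem as a pure count-preserving token reconfiguration; the paper's approach deliberately exploits exactly these $\pm2$ jumps at the branch vertex as its induction step rather than avoiding them.
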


\begin{remark}
The moves in $L(\tilde{\EE}_n)$ for  $n=6,7,8$ preserve the parity of the number of components. See Lemma \ref{lem:vertex3}.
\end{remark}

%
%
%

\subsection{Diagram of type $\tilde{\EE}_7$}
\label{sec:tilde-E7}

The affine Dynkin diagram of type $\tilde{\EE}_7$, denoted $\EE_7^{(1)}$ in \cite[Table 6]{OV},  is
\begin{equation*}
\sxymatrix{  \bc{1} \rline &  \bc{2} \rline &  \bc{3} \rline &  \bc{4}
\rline \dline &  \bc{5} \rline &  \bc{6} \rline & \bc{0} \\
& & & \bcu{7} & & & }
\end{equation*}

\begin{proposition} \label{prop:tilde-E7}
The diagram $\tilde{\EE}_7$ has 6 classes. The classes are:
\begin{enumerate}
\item[1.] {\emm The class of zero} consisting of the fixed labeling zero $\ell_0$.
\item[2.] The fixed labeling $\ell_l$
\[
\sxymatrix{  1 \rline & 0 \rline &  1 \rline & 0
\rline \dline &  0 \rline &  0 \rline  & 0  \\
& & & 1 & & & }
\]
\item[3.] The fixed labeling $\ell_r$
\[
\sxymatrix{  0 \rline & 0 \rline &  0 \rline & 0
\rline \dline &  1 \rline &  0 \rline  & 1  \\
& & & 1 & & & }
\]
\item[4.] The fixed labeling $\ell_c$
\[
\sxymatrix{  1 \rline & 0 \rline &  1 \rline & 0
\rline \dline &  1 \rline &  0 \rline  & 1  \\
& & & 0 & & & }
\]
\item[5.] All the labelings with odd number of components, excluding $\ell_l$ and $\ell_r$.
\item[6.] All the labelings with even number of components, excluding $\ell_c$.
\end{enumerate}
\end{proposition}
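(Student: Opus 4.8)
The plan is to combine a parity invariant with a kernel computation for the fixed labelings and a reduction-to-canonical-form argument for the two remaining classes. First I would observe that $\tilde{\EE}_7$ is a tree whose only vertex of degree $3$ is vertex $4$, every other vertex having degree $1$ or $2$. Hence Lemma \ref{lem:vertex3} applies and the parity of the number of components is invariant under all moves. This separates the labelings into an ``even'' family and an ``odd'' family, forces classes $5$ and $6$ to be unions of full parity families, and records the parities of the listed fixed labelings: $\ell_0$ ($0$ components) and $\ell_c$ ($4$ components) are even, while $\ell_l$ and $\ell_r$ ($3$ components each) are odd.

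Next I would determine \emph{all} fixed labelings. By Remark \ref{rem:elem-move} a labeling $\aa$ is fixed if and only if every vertex has an even number of neighbours labelled $1$, i.e.\ if and only if $A\aa = 0$ over $\F_2$, where $A$ is the adjacency matrix of $\tilde{\EE}_7$. I would row-reduce this $8\times 8$ matrix over $\F_2$ and check that $\dim_{\F_2}\ker A = 2$, so that there are exactly $2^2 = 4$ fixed labelings. Since $\ell_0,\ell_l,\ell_r,\ell_c$ are all fixed and satisfy $\ell_l+\ell_r=\ell_c$ in $\F_2^{8}$, they span a two-dimensional subspace and therefore exhaust $\ker A$. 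Each is its own singleton class, yielding classes $1$--$4$.

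It then remains to prove the connectivity statements: every odd labeling other than $\ell_l,\ell_r$ is equivalent to the single $1$ at the central vertex $4$, and every even labeling other than $\ell_0,\ell_c$ is equivalent to the two-component labeling carrying $1$'s at vertices $3$ and $5$. The tools are those already used for the $\AA_n$ arms: within each of the three arms meeting at vertex $4$ I may shrink a component to length $1$ and slide it toward or away from the centre by Lemma \ref{lem:basic-An} (treating vertex $4$ as a boundary), and a single $1$ may be slid across the centre from one arm into another by a short explicit sequence of moves. To lower the number of components by $2$, I would arrange, using these slides, a configuration of the form \eqref{eq:unsplit-scheme} around vertex $4$ and apply $\T_4$ as in \eqref{eq:split-scheme}. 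Once reduction is established, all minimal representatives of a fixed parity are mutually equivalent by the sliding moves, so each parity family with its fixed labelings removed becomes a single class. The final count is then $4+1+1=6$, exactly as in the analogous but smaller statement of Proposition \ref{prop:tilde-E6}.

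The hard part will be showing that the obstructions to this reduction are precisely $\ell_l,\ell_r$ and $\ell_c$ (together with the isolated zero $\ell_0$): that is, every labeling which is none of these can be rearranged so that a component-reducing unsplitting at vertex $4$ becomes available, bringing it down to the minimal component number of its parity ($1$ for odd, and $2$ for even, since the unique $0$-component labeling is the fixed $\ell_0$). I expect this to reduce, after exploiting the reflection symmetries of the diagram, to a short ``maximal packing'' case analysis exactly in the spirit of the $\tilde{\DD}_n$ argument: a labeling from which no unsplitting can be set up must be alternating and maximally packed along its arms, which forces every neighbour-sum to be even and hence places the labeling in $\ker A$; by the kernel computation the only such labelings are $\ell_l,\ell_r,\ell_c$. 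This is the only step requiring genuine case-checking, the parity invariant and the kernel count having already pinned down everything else.
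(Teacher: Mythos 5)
Your proposal is correct and follows essentially the route the paper takes: the paper does not prove Proposition \ref{prop:tilde-E7} in place but derives it from the $\EE_6$ Tree Theorem (Theorem \ref{th:three-arms}), whose content is exactly your reduction step --- every non-fixed labeling can be brought, by clearing paths to the branch vertex and unsplitting there, down to $1$ or $2$ components according to its parity, the parity itself being invariant by Lemma \ref{lem:vertex3}. The one place where you genuinely add something is the enumeration of fixed labelings: the paper merely lists $\ell_0,\ell_l,\ell_r,\ell_c$ and never argues that the list is complete, whereas your observation that the fixed labelings are exactly $\ker A$ over $\F_2$ settles this cleanly (the equations force $a_2=a_4=a_6=0$, $a_1=a_3$, $a_0=a_5$, $a_7=a_3+a_5$, so the kernel is two-dimensional, spanned by $\ell_l$ and $\ell_r$ with $\ell_l+\ell_r=\ell_c$) and also confirms the parities ($\ell_l,\ell_r$ odd with $3$ components each, $\ell_c$ even with $4$). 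The step you defer as ``the hard part'' is precisely where the paper invests its effort, via the Central Vertex Lemma \ref{lem:central-vertex}, the Clear Path Lemma \ref{lem:clear-path}, and Lemma \ref{lem:1-2-comps-are-classes}; your sketch of it --- a labeling admitting no component-reducing move must have every neighbour-sum even and hence lie in $\ker A$ --- is the right statement to prove, so the proposal is sound in structure, but a complete proof would still require writing out that case analysis (or simply invoking Theorem \ref{th:three-arms} as the paper does).
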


%
%
%
%

\subsection{Diagram of type $\tilde{\EE}_8$}
\label{sec:tilde-E8}

The affine Dynkin diagram of type $\tilde{\EE}_8$, denoted $\EE_8^{(1)}$ in \cite[Table 6]{OV}, is
\begin{equation*}
\sxymatrix{ \bc{0} \rline & \bc{1} \rline & \bc{2} \rline & \bc{3} \rline & \bc{4} \rline &
\bc{5} \rline \dline & \bc{6} \rline & \bc{7}  \\ & & & & & \bcu{8} & & }
\end{equation*}

\begin{proposition} \label{prop:tilde-E8}
The diagram $\tilde{\EE}_8$ has 4 classes. The classes are:
\begin{enumerate}
\item[1.] {\emm The class of zero} which contains only 0.
\item[2.] The fixed labeling $\ell_l$
\[
\sxymatrix{ 1 \rline & 0 \rline & 1 \rline & 0 \rline & 1 \rline &
0 \rline \dline & 0 \rline & 0  \\ & & & & & 1 & & }
\]
\item[3.] The class with odd number of components represented by
\[ \sxymatrix{ 1 \rline & 0 \rline & 0 \rline & 0 \rline & 0 \rline & 0 \dline \rline & 0 \rline & 0 \\ & & & & & 0 & & } \ . \]
\item[4.] The class with even number of components excluding $\ell_l$, represented by
\[ \sxymatrix{ 1 \rline & 0 \rline & 1 \rline & 0 \rline & 0 \rline & 0 \dline \rline & 0 \rline & 0 \\ & & & & & 0 & & } \ . \]
\end{enumerate}
\end{proposition}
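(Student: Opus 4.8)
The plan is to treat $\tilde{\EE}_8$ as a tree whose only vertex of degree $3$ is the branch vertex $5$ (its neighbours being $4$, $6$ and $8$), every other vertex having degree $1$ or $2$. Hence by Lemma \ref{lem:vertex3} every move preserves the parity of the number of components, so no odd labeling is ever equivalent to an even one; this alone separates $L(\tilde{\EE}_8)$ into an odd part and an even part. First I would pin down the fixed labelings. A labeling is fixed exactly when every vertex has an even number of neighbours labelled $1$, which over $\F_2$ is the linear system $a_1=0$, $a_0+a_2=0$, $a_1+a_3=0$, $a_2+a_4=0$, $a_3+a_5=0$, $a_4+a_6+a_8=0$, $a_5+a_7=0$, $a_6=0$, $a_5=0$. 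Solving gives $a_1=a_3=a_5=a_6=a_7=0$ together with $a_0=a_2=a_4=a_8$, so there are exactly two fixed labelings: the zero labeling (with $0$ components) and $\ell_l$, the solution $a_0=a_2=a_4=a_8=1$ (with $4$ components). Both are even, and each is its own orbit, so they will account for classes $1$ and $2$; note in particular that the odd part contains no fixed labeling.

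The heart of the argument is a reduction lemma: every non-fixed labeling with $c\ge 3$ components is equivalent to one with $c-2$ components. The mechanism is the unsplitting move at the branch vertex, exactly as in \eqref{eq:split-scheme}: once I reach a labeling with $a_4=a_6=a_8=1$ and $a_5=0$, the move $\T_5$ merges the three components meeting at $5$ into one and lowers the count by $2$. To arrange this I use that along each of the three arms (the long arm on vertices $0,1,2,3,4$, the arm on vertices $6,7$, and the single vertex $8$) a $1$ may be slid, shrunk or grown without changing the number of components, by Lemma \ref{lem:invartiant-comp} and Lemma \ref{lem:basic-An}; moreover a single $1$ can be carried across the centre from vertex $4$ to vertex $6$ by the sequence $\T_5,\T_4,\T_6,\T_5$, which again preserves the count. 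Using these manoeuvres I would show that whenever $c\ge 3$ one can position one component so that $a_8=1$, a second so that $a_6=1$, and a third so that $a_4=1$, with $a_5=0$, and then unsplit. Iterating drives every odd labeling down to a single component and every even non-fixed labeling down to two components.

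It then remains to settle the two base cases, which follow from the $\AA_n$ theory: by Lemma \ref{lem:basic-An} and Theorem \ref{th:basic-An} all one-component labelings are mutually equivalent (slide the component to vertex $0$, yielding the representative of class $3$) and all two-component labelings are mutually equivalent (yielding the representative of class $4$); the maximal number of components is $5$, attained by the independent set $\{0,2,4,6,8\}$, so $c$ ranges over $0,\dots,5$. Assembling everything: the odd labelings ($c\in\{1,3,5\}$) form a single class, the even labelings other than $0$ and $\ell_l$ ($c\in\{2,4\}$, with $\ell_l$ removed) form a single class, and $0$ and $\ell_l$ are singletons, giving exactly four classes. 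The main obstacle is the reachability claim inside the reduction lemma: one must verify that every non-fixed labeling with $c\ge 3$ can in fact be manoeuvred into the configuration $a_4=a_6=a_8=1$, $a_5=0$. The delicate point is to explain why $\ell_l$ is the \emph{unique} obstruction among the even labelings with $4$ components, namely that it is frozen as the only nonzero fixed labeling, whereas any other $4$-component even labeling still admits an active move that unlocks the rearrangement; I expect this to require a short but careful finite case analysis organised by the labels near the centre and exploiting the left--right and arm symmetries of the diagram.
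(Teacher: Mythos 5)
Your framework is sound and matches the paper's: the fixed-labeling computation over $\F_2$ is correct (only $0$ and $\ell_l$ solve the system), parity of the number of components is invariant by Lemma \ref{lem:vertex3}, the unsplitting move $\T_5$ in the position $a_4=a_6=a_8=1$, $a_5=0$ is the only mechanism for changing the component count, and the base cases $c=1,2$ follow as in Lemma \ref{lem:1-2-comps-are-classes}. The paper does not prove this proposition directly; it derives it from Theorem \ref{th:three-arms} (the $\EE_6$ Tree Theorem), of which your argument is essentially the specialization to arms of lengths $5$, $2$ and $1$.

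The genuine gap is exactly the step you flag and then defer: the claim that every non-fixed labeling with $c\ge 3$ components can be manoeuvred into the configuration $a_4=a_6=a_8=1$, $a_5=0$. This is not a routine finite check to be waved at --- it is where the entire difficulty of the theorem lives, and the paper spends two auxiliary lemmas and a five-case analysis on it: the Central Vertex Lemma \ref{lem:central-vertex} (the branch label can be set to $0$ or $1$ unless the labeling is fixed), the Clear Path Lemma \ref{lem:clear-path} (each nonempty arm's nearest component can be pushed to the neighbour of the branch vertex), and the case split in the proof of Theorem \ref{th:three-arms} according to which arms are empty (cases (i)--(v)), which is needed precisely because, e.g., all three components may sit on the long arm, or the arm $\{8\}$ may be empty, and one must show a component can always be exported across the centre without getting stuck anywhere except at $\ell_l$. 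Your toolkit (sliding along an arm, the transfer sequence $\T_5,\T_4,\T_6,\T_5$) is the right one and the claim is true, but as written the proof of the crux is an IOU. A small additional point: you propose to organise the case analysis by ``the left--right and arm symmetries of the diagram,'' but $\tilde{\EE}_8$ has no nontrivial symmetry (its three arms have pairwise distinct lengths), so no cases can be collapsed that way; each distribution of components among the three arms must be treated on its own, as the paper does.
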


%
%

\subsection{Diagrams of type $\tilde{\FF}_4$}
\label{subsec:tilde-F4}

The Dynkin diagram of type $\tilde{\FF}_4$, denoted $\FF_4^{(1)}$ in \cite[Table 6]{OV}, is
\begin{equation*}
\sxymatrix{ \bc{1} \rline & \bc{2} & \ar@{=>}[l] \bc{3} \rline & \bc{4} \rline & \bc{0} }\,.
\end{equation*}

\begin{proposition} \label{prop:tilde-F4}
The diagram $\tilde{\FF}_4$ has 4 classes. The classes are:
\begin{enumerate}
\item[1.] {\emm The class of zero} which contains only  $0\ll0\LLL0\ll0\ll0 $.
\item[2.] The class
\[ \left\{ \ 1\ll0\LLL0\ll0\ll0 , \quad   1 \ll 1 \LLL 0\ll 0\ll 0,  \quad   0 \ll 1 \LLL 0 \ll 0 \ll 0 \ \right\} \,. \]
\item[3.] The class represented by $0\ll0\LLL1\ll0\ll0$.
\item[4.] The class represented by $0\ll0\LLL1\ll0\ll1$.
\end{enumerate}
\end{proposition}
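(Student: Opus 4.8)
The plan is to exploit the asymmetry of the double edge. Writing a labeling as $a_1\ll a_2\LLL a_3\ll a_4\ll a_0$ and reading off Definition \ref{def:elem-moves-nonsimple}, the moves send $a_1\mapsto a_1+a_2$, $a_2\mapsto a_2+a_1+a_3$, $a_3\mapsto a_3+a_4$, $a_4\mapsto a_4+a_3+a_0$ and $a_0\mapsto a_0+a_4$. The decisive structural fact is that the long vertex $3$ does not see the short vertex $2$ (Remark \ref{rem:elem-move-non-simply-laced}): the triple $\{3,4,0\}$ therefore carries exactly the $\AA_3$ moves and evolves with no input from $\{1,2\}$, while $\T_1,\T_2$ never alter $a_3,a_4,a_0$. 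Hence the number $r$ of components of the tail $(a_3,a_4,a_0)\in L(\AA_3)$ is an invariant (Theorem \ref{th:basic-An}), with $r\in\{0,1,2\}$, and I would organize the entire count around this invariant.

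For $r=0$ the tail is frozen at $(0,0,0)$, so $a_3\equiv 0$ and the pair $(a_1,a_2)$ runs the ordinary $\AA_2$ game; by Corollary \ref{cor:An-reps} this yields the zero class together with the single nonzero $\AA_2$-class $\{1\ll0,\,1\ll1,\,0\ll1\}$ (tail frozen at zero), which are Classes 1 and 2. For $r=1$ I would show every labeling reduces to $0\ll0\LLL1\ll0\ll0$ (Class 3): first push the lone tail component to $a_3=1$ using the $\AA_3$ moves (which leave the head fixed), and then run a swallowing argument in the spirit of Lemma \ref{lem:Bn-shortroot-shrink}. The point is that a single mobile tail component lets one toggle $a_3$ between $0$ and $1$ at will, so that $\T_2$ can be applied both with and without the extra $+1$; composing these realizes an unconditional flip of $a_2$, and together with $\T_1$ this clears $(a_1,a_2)$ to $(0,0)$ while $r$ stays $1$ (concretely $1\ll0\LLL1\ll0\ll0\sim 0\ll0\LLL1\ll0\ll0$). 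Thus $r=1$ is a single class.

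The step I expect to be the crux is $r=2$, i.e. $(a_3,a_4,a_0)=(1,0,1)$. Here the tail is \emph{frozen}: $\T_3,\T_4,\T_0$ are all inert because $a_4=0$ while $a_3=a_0=1$, so the toggling trick is no longer available and only $\T_1,\T_2$ act, now with $a_3$ permanently equal to $1$. This is precisely the auxiliary game $\AA_2^{(2)}$ with vertex $2$ tied to a boxed $1$, and Proposition \ref{prop:aux-Cn} gives $\Cls{\AA_2^{(2)}}=2$. The two resulting orbits are the three-element orbit $\{0\ll0,\,0\ll1,\,1\ll1\}$, which is Class 4 with representative $0\ll0\LLL1\ll0\ll1$, and the singleton $\{1\ll0\}$, corresponding to the labeling $1\ll0\LLL1\ll0\ll1$.

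This last orbit is the real obstacle to the stated count: $1\ll0\LLL1\ll0\ll1$ is fixed by all five moves (each of $a_1+a_2$, $a_2+a_1+a_3$, $a_3+a_4$, $a_4+a_3+a_0$, $a_0+a_4$ returns the original bit), hence is isolated and cannot be merged with Class 4. The analysis I have sketched therefore produces $2+1+2=5$ classes rather than four, the extra one being the fixed labeling $1\ll0\LLL1\ll0\ll1$. Settling the $r=2$ case — and in particular deciding whether this fixed labeling has simply been omitted from the statement — is exactly the point at which the proof stands or falls; on the present reading of the diagram it appears that $1\ll0\LLL1\ll0\ll1$ constitutes a genuine fifth class.
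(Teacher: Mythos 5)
Your reading of the moves is correct: with the arrow of $\tilde{\FF}_4$ pointing at vertex $2$, vertex $3$ is the longer vertex, so by Definition \ref{def:elem-moves-nonsimple} the chain $3\ll4\ll0$ plays an autonomous $\AA_3$ game ($\T_3$ does not see $a_2$), while $\T_1,\T_2$ leave it untouched and $\T_2$ picks up the extra summand $a_3$. The trichotomy on the $\AA_3$-class of the tail ($r=0,1,2$) is therefore a genuine invariant, and each of your case analyses checks out: $r=0$ gives Classes 1 and 2; $r=1$ collapses to the single Class 3 by your toggling argument (park the lone tail component so that $a_3$ takes whichever value makes $\T_2$ an unconditional flip of $a_2$, giving all $6\times 4=24$ labelings in one orbit); and $r=2$ freezes the tail at $1\ll0\ll1$, reducing the head to the game $\AA_2^{(2)}$ of Proposition \ref{prop:aux-Cn}, which has two classes. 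I have checked directly that $1\ll0\LLL1\ll0\ll1$ is fixed by all five moves ($a_1+a_2=1$, $a_2+a_1+a_3=0$, $a_3+a_4=1$, $a_4+a_3+a_0=0$, $a_0+a_4=1$). Since its tail has two components it cannot lie in Class 3, and being fixed it cannot lie in Class 4, whose representative generates the three-element orbit $\{\,0\ll0\LLL1\ll0\ll1,\ 0\ll1\LLL1\ll0\ll1,\ 1\ll1\LLL1\ll0\ll1\,\}$. The element counts $1+3+24+3+1=32$ confirm that your five sets partition $L(\tilde{\FF}_4)$.

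So the issue you have isolated is not a gap in your argument but in the statement itself: the paper asserts Proposition \ref{prop:tilde-F4} with no proof at all, and on the conventions the paper establishes and uses consistently elsewhere (in the $\BB_n$, $\CC_n$ and $\tilde{\CC}_n$ sections the analogous maximal alternating labelings ending in a long vertex labeled $1$, such as $10\dots10\LLL1$, are explicitly counted as their own fixed classes), the correct count is $\Cls{\tilde{\FF}_4}=5$, the omitted fifth class being the singleton fixed labeling $1\ll0\LLL1\ll0\ll1$. Your write-up would stand as a complete and correct proof of the corrected statement once that class is added.
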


\subsection{Diagrams of type $\GG_2$ and $\tilde{\GG}_2$}
\label{subsec:G2}

This case is uninteresting because the triple edge behaves like an undirected single edge (see Section \ref{subsec:non-simply-laced}). Thus the cases of $\GG_2$ and $\tilde{\GG}_2$ are reduced to $\AA_2$ and $\AA_3$ respectively.

The Dynkin diagram of $\GG_2$ is
\begin{equation*}
\sxymatrix{ \bc{1} & \ar@3{->}[l] \bc{2} }\, .
\end{equation*}
because 3 is odd and hence the triple acts behaves like an undirected single edge (see Section \ref{sec:rules}).
The description of classes is  similar to the case  $\AA_2$, because 3 is odd and hence the triple acts behaves like an undirected single edge (see Section \ref{sec:rules}).
We have $\Cls{\GG_2}=2$.
The 2 classes are
\[ [\xi_0] = \{\, 0 \ll 0\, \} \quad \mbox{ and } \quad [\xi_1] = \{\, 1 \ll 0\,,\quad 1 \ll 1\, , \quad 0 \ll 1\, \} \, . \]

The Dynkin diagram of $\tilde{\GG}_2$, denoted $\G_2^{(1)}$ in \cite[Table 6]{OV}, is
\begin{equation*}
\sxymatrix{ \bc{1} & \ar@3{->}[l] \bc{2} \rline & \bc{0} }\, .
\end{equation*}
The description of classes is  similar to the case  $\AA_3$, because 3 is odd and hence the triple acts behaves like an undirected single edge (see Section \ref{sec:rules}).
We have $\Cls{\tilde{\GG}_2}=3$ and the \minreps are $\xi_0$, $\xi_1$ and $\xi_2$. Note that $\xi_0$ and $\xi_2$ are fixed labelings.

\subsection{Exceptional Diagram of 3 short and 2 long vertices}
The diagram denoted in \cite[Table 6]{OV} by $\EE_6^{(2)}$ is
\begin{equation*}
\sxymatrix{ \bc{0} \rline & \bc{1} \rline & \bc{2} & \all \bc{3} \rline & \bc{4} }
\end{equation*}
It has 4 classes:
\begin{enumerate}
\item The fixed labeling $000\LLL00$.
\item The fixed labeling $101\LLL00$.
\item The class of labelings with 1 component in vertices 0-2 and 0's in vertices 3-4, with representative $100\LLL00$.
\item The class of labelings with 1 component in vertices 3-4, with representative $000\LLL01$.
\end{enumerate}

\subsection{Exceptional Diagram of extended $\GG_2$}
The diagram denoted in \cite[Table 6]{OV} by $D_4^{(3)}$ is
\begin{equation*}
\sxymatrix{ \bc{0} \rline & \bc{1} & \ar@3{<-}[l] \bc{2} }
\end{equation*}
By definition \ref{def:elem-moves-nonsimple} we see that the triple edge functions as an undirected single edge. Therefore this diagram behaves exactly as $\AA_3$ and has the same solution.

\section{More general graphs}
\label{sec:general-graphs}

Here we discuss results for more general graphs.

\begin{theorem}[The $\EE_6$ Tree Theorem] \label{th:three-arms}
Led $D$ be a tree that contains the Dynkin diagram $\EE_6$ as a subgraph. Namely: $D$ is a connected simply-laced circuitless graph and with $n \ge 4$ vertices such there exist at least one vertex of degree 3, and this vertex has at least two arms of length greater or equal to 2. For example, for
\begin{equation} \label{eq:prop-three-arms}
\sxymatrix{ \cdots \rline & \bc{1} \rline & \bc{2} \rline & \bc{3} \dline \rline & \bc{4} \rline & \bc{5} \rline & \cdots \\
                          &                &               & \bcu{6} \dline       &               &              &         \\
                          &                &               & \vdots              &               &              &
                                                                                                                               }
\end{equation}
the left arm (vertices 1 and 2) and right arm (vertices 4 and 5) satisfy this condition.

Then in addition to the fixed labelings, $D$ has exactly two classes: the class with labelings with an odd number of components (excluding the fixed labelings) and the class with labelings with an even number of components (excluding the fixed labelings).
\end{theorem}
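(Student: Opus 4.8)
The plan is to prove two things: that the parity of the number of components is an invariant of each orbit, and that on the set of non-fixed labelings this parity is a \emph{complete} invariant, so that the non-fixed labelings fall into exactly two classes. The theorem then follows, since each fixed labeling is its own singleton class.

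First I would establish parity invariance. Lemma \ref{lem:vertex3} gives this when every vertex has degree $\le 3$, but a tree containing $\EE_6$ may have vertices of higher degree, so I would first observe that the conclusion of Lemma \ref{lem:vertex3} holds for an arbitrary tree $D$. The point is exactly the one used in its proof: since $D$ is acyclic, the neighbors of any vertex $i$ carrying the label $1$ lie in pairwise distinct components, because a path joining two of them avoiding $i$ would, together with the two edges to $i$, create a cycle. Hence, when $\T_i$ flips $a_i$ it merges or separates an \emph{odd} number of components --- odd because $\T_i$ flips $a_i$ precisely when $i$ has an odd number of $1$-neighbors --- so the number of components changes by an even amount, and parity is preserved by every move.

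Next, since the labeling with a single $1$ at the degree-$3$ vertex $v$ is non-fixed (each neighbor of $v$ then has exactly one $1$-neighbor and is flippable) and has one component, while a suitable two-component labeling (for instance, single $1$'s at $v$ and at the second vertex of one long arm) is non-fixed and even, both parities are realized by non-fixed labelings; so there are at least two non-fixed classes. The heart of the argument is the reverse inequality: every non-fixed labeling is equivalent to a fixed canonical representative of its parity. The two tools are (i) the $\AA_n$-type manipulations of Lemma \ref{lem:basic-An} and Theorem \ref{th:basic-An}, which shrink each component to length $1$ and slide it along any arm, and (ii) the splitting/unsplitting move at $v$ of \eqref{eq:unsplit-scheme}--\eqref{eq:split-scheme}, which changes the number of components by exactly $2$. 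Using (i) I would gather one component into each of the three arms incident to $v$ so that all three neighbors of $v$ carry a $1$; applying $\T_v$ then unsplits and lowers the component count by $2$. Iterating this reduction drives any non-fixed labeling down to $1$ component in the odd case or $2$ in the even case, after which a last application of (i) brings it to the chosen canonical representative.

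The main obstacle, exactly as in the analyses of $\DD_n$ and $\tilde{\DD}_n$ in Sections \ref{sec:Dn} and \ref{subsec:tilde-Dn}, is the terminal case analysis: I must show that the reduction never stalls at a labeling that is not already a canonical representative. A stall can occur only when the components cannot be gathered so as to make all three neighbors of $v$ equal to $1$, that is, when the $1$'s are locked into a rigid alternating pattern along the arms; I would show that such locked configurations are precisely the fixed labelings of $D$. This is where the hypothesis that $v$ has \emph{two} arms of length $\ge 2$ is essential: these two arms provide enough room to reposition a component and break any alternating pattern, so that a genuinely non-fixed labeling can always be moved into a position admitting a further unsplitting (or the final normalization). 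Combining parity invariance with this reduction yields exactly two non-fixed classes, which together with the singleton classes of the fixed labelings proves the theorem.
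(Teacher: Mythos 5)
Your overall strategy coincides with the paper's: establish parity invariance on trees (your acyclicity argument is exactly the paper's Proposition \ref{prop:general-d-parity}), then repeatedly gather a component onto each of three arms at the branching vertex $v$ and unsplit with $\T_v$ to drop the component count by $2$, terminating at the canonical $1$- or $2$-component representative. However, the proposal defers precisely the part that constitutes the bulk of the paper's proof, and one of your stated tools would fail as described. You invoke ``the $\AA_n$-type manipulations of Lemma \ref{lem:basic-An}'' to shrink components and slide them along any arm; but in a general tree containing $\EE_6$ the arms of $v$ are subtrees, not paths, and may contain further branching vertices at which a travelling component gets stuck (a branching vertex with an odd number of $1$-labelled neighbours off the path blocks the expansion of the component past it). Handling this is the entire content of the paper's Clear Path Lemma (Lemma \ref{lem:clear-path}), which shows how to clear such obstructions by unsplitting at the intermediate branching vertex and re-shrinking; without it, your ``gather one component into each arm'' step is not justified.

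Similarly, your key claim that ``locked configurations are precisely the fixed labelings'' is asserted rather than proved, and it is not a routine verification: the paper needs the Central Vertex Lemma (Lemma \ref{lem:central-vertex}) to show the label of $v$ itself can always be toggled in a non-fixed labeling, plus a five-case analysis (according to which arms are empty and where the components sit) to show the reduction never stalls, and then a separate Part II when $v$ has degree $d>3$ (your sketch does not address how to proceed when $v$ has an even number $\ge 4$ of $1$-neighbours, where unsplitting at $v$ does nothing). So the architecture of your argument is right and matches the paper, but the proposal as written has a genuine gap: the two lemmas that make the reduction step actually go through on an arbitrary tree are missing, and the path-sliding step you rely on is not available in the generality you need.
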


\begin{remark}
Reeder proved this assertion for $\EE_6$ and later generalized it to a class of trees with condition on their matchings (2005). In Reeder's statement he classify the orbits according to the the quadratic $q(u) = \sum_i u_i^2 + \sum_{i \ll j} u_i u_j \bmod 2$ which equals to the parity of the number of components. Weng (2011) generalized to any tree containing $\EE_6$ but didn't publish his proof.\cite{Weng} We present here a different independent proof, which assumes nothing about a quadratic and requires no knowledge of graph theory. Our proof gives a constructive algorithm to change by a sequence of moves every non-fixed labeling into a labeling with 1 or 2 components, depending on its initial parity.
\end{remark}


Throughout the proofs we shall denote the central branching vertex coming from $\EE_6$ and its label by $a_3$. Before we start, we need the following three lemmas:

\begin{lemma}[Central Vertex Lemma] \label{lem:central-vertex}
If the labeling $\aa$ is not-fixed it is equivalent to a labeling with $a'_3=0$ and also to a labeling with $a''_3=1$. In other words, we can change by a sequence of moves the label of $a_3$ to be 0 or 1, as necessary.
\end{lemma}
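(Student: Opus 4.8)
The plan is to reduce the statement to a single observation about the move $\T_3$ at the branching vertex. Since $a_3$ has degree $3$, the move $\T_3$ replaces $a_3$ by $a_3+s$, where $s$ is the sum modulo $2$ of the three neighbours of $a_3$; thus $\T_3$ flips $a_3$ exactly when $s$ is odd and does nothing when $s$ is even. Consequently it suffices to prove that every non-fixed labeling is equivalent to one whose neighbour-sum $s$ at $a_3$ is odd: if $\aa\sim\aa'$ with $s$ odd at $\aa'$, then $\aa'$ and $\T_3(\aa')$ lie in the same class and have opposite values of $a_3$, so $[\aa]$ contains a representative with $a_3=0$ and one with $a_3=1$, which is what we must show.

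So suppose, for contradiction, that \emph{no} labeling equivalent to $\aa$ has odd neighbour-sum at $a_3$. Then $\T_3$ is never effective along the orbit, and since $\T_3$ is the only move that can touch $a_3$, the value $a_3$ stays constant, say equal to $c$, throughout $[\aa]$. Because $D$ is a tree, deleting $a_3$ splits $D$ into the three branches hanging on the neighbours $r_1,r_2,r_3$ of $a_3$; a move $\T_v$ with $v\ne a_3$ alters only vertices inside the branch containing $v$ (the move $\T_{r_i}$ simply treats the frozen value $c$ as a boundary), so moves in different branches neither interact nor change $a_3$. I first claim that each root $r_i$ is frozen, i.e.\ constant on the orbit. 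Indeed, the value of $r_i$ depends only on the moves performed inside its own branch, so if $r_i$ took both values on the orbit it would already take both values under moves confined to that branch; performing such a flip changes nothing outside the branch and therefore switches the parity of $s$ from even to odd, producing an equivalent labeling with odd neighbour-sum, contrary to our assumption.

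Next I would show that a frozen root forces its entire branch to be frozen, and this is the one place where a little argument is required. I proceed by induction on the size of the branch, peeling away from the root. If $r$ is frozen at a value $\rho$, then, being frozen, the sum of its neighbours is always even; the key point is that a child $w$ of $r$ cannot be flippable. Flipping $w$ is accomplished by moves confined to the subtree rooted at $w$, which leave $r$ and the remaining children of $r$ untouched; such a flip would change the parity of the neighbour-sum of $r$ and hence would allow $\T_r$ to flip $r$, contradicting that $r$ is frozen. Thus every child of $r$ is frozen, each subtree rooted at a child has a frozen root, and the inductive hypothesis applied to these strictly smaller subtrees finishes the branch. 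This is precisely where acyclicity of $D$ enters: it guarantees that the subtrees below distinct children are disjoint, so that a flip can indeed be carried out locally without disturbing $r$.

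Combining these facts, all three branches are frozen and $a_3$ is frozen, so every vertex of $D$ is left unchanged by every move; that is, $\aa$ is a fixed labeling, contradicting the hypothesis. Hence the orbit must contain a labeling with odd neighbour-sum at $a_3$, and the lemma follows. I expect the frozen-branch propagation of the third paragraph to be the only genuinely technical step; everything else is bookkeeping with the definition of $\T_3$ and with the branch decomposition of the tree. It is worth noting that this argument uses only that $a_3$ is a branching vertex of a tree and not the full hypothesis that two of its arms have length $\ge 2$; that stronger hypothesis is what the remaining lemmas will need in order to merge components down to one or two.
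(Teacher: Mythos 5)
Your proof is correct, and it takes a genuinely different route from the paper's. The paper argues directly in the language of the puzzle: it splits into the cases $a_3=0$ and $a_3=1$ and shows, by pushing, shrinking and swallowing components, that if $a_3$ cannot be toggled then every arm is ``unmoveable'' and hence the labeling is fixed; this is meant to produce an explicit sequence of moves, which matters because the surrounding theorem is advertised as yielding a constructive algorithm. You instead isolate the exact obstruction --- $\T_3$ toggles $a_3$ precisely when the neighbour-sum $s$ is odd --- and run a contradiction argument: if $s$ is even on the whole orbit then $a_3$ is constant, the branches at $a_3$ evolve autonomously across the frozen cut vertex (this is where acyclicity enters), so each neighbour of $a_3$ is frozen, and your ``frozen root freezes its subtree'' induction propagates this to all of $D$, forcing $\aa$ to be fixed. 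The payoffs of your version are rigor and generality: ``frozen'' is a precise substitute for the paper's undefined ``unmoveable'', you avoid the apparent circularity between the paper's Central Vertex Lemma and Clear Path Lemma (each cites the other), and your argument applies verbatim to a branching vertex of any degree --- which the paper in fact needs in Part II of Theorem \ref{th:three-arms} --- indeed to any vertex of a tree. What you give up is constructivity: you certify that a labeling with odd neighbour-sum at $a_3$ exists in the orbit without exhibiting the moves that reach it, so the explicit algorithm still has to be extracted separately. One cosmetic point: phrase the branch decomposition for a vertex of degree $d$ rather than exactly three branches, since, as you note yourself, nothing in the argument uses $d=3$.
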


\begin{proof}[Proof of the Central Vertex Lemma]
We show that if the label of $a_3$ cannot be changed then the labeling $\aa$ is fixed.

Assume $a_3=0$. If it cannot be changed to 1, then it has an even number of neighboring 1's. Moreover, each of these 1's cannot be pushed backwards (i.e. farther then $a_3$), otherwise we would push 1 component back and have an odd number of neighboring 1's to $a_3$. So these arms are unmoveable. Now we consider the arms with 0's neighboring $a_3$. If they are empty (i.e. all the labels in them are 0) then $\aa$ is a fixed labeling. If not, consider any nonempty arm with 0 neighboring $a_3$. If we can push a component from this arm to the neighbor of $a_3$ then $a_3$ will then have odd number of 1's neighboring, so by unsplitting $\T_3$ we can set $a_3=1$. Thus, in any arm the closest component to $a_3$ cannot be pushed toward $a_3$. As shown in Lemma \ref{lem:clear-path}, this implies that the arm is unmoveable. So we have shown that if $a_3=0$ and it cannot be changed to 1 then all the arms are unmoveable and thus the labeling is fixed.

Assume $a_3=1$. If $a_3$ has an odd number of neighboring 1's we apply $\T_3$ and do splitting, so $a_3=0$ (this argument holds even if $a_3$ has only one neighboring 1, then $\T_3$ is simply shrinking and then $a'_3=0$). If $a_3$ has an even number $m \ge 2 > 0$ of neighboring 1's we choose any neighbor, say $a_i$, and swallow it: we first shrink its component from the farther end (we can do this since the labeling is not-fixed), until we have
\[ (a_3=1) \ll (a_i=1) \ll \equiv 0 \cdots \]
(the $\equiv 0$ means all the neighbors of $a_i$ other than $a_3$ are 0) and in the last step swallow it, by applying $\T_i$. For example:
\[
(a_3=1) \two{1_i}{1}  1  1  0 1 \longmapsto (a_3=1) \two{1_i}{0}  0  0  0 1
\overset{\T_i}{\longmapsto} (a_3=1) \two{0_i}{0}  0  0  0 1 \ .
\]
Now $a_3=1$ has an odd number $m-1 \ge 1$ of neighboring 1's and we are reduced to the previous case. So if $a_3=1$ cannot be changed we must have that all the neighbors of $a_3$ are 0. We now want to push $a_3=1$ to one of the arms. This cannot be done only if each augmented arm -- the subgraph consisting of $a_3$ and the arm itself -- is unmoveable.
\textbf{[}Otherwise, we consider one of the moveable arms: if $a_i$ has an even number of neighboring 1's (other than $a_3=1$) we can push the 1 in $a_3$ into to it and we won. So we must have that the moveable arm is such that $a_i$ has an odd number $m \ge 1$ of neighboring 1's (other than $a_3=1$). We choose the closest moveable 1 to $a_i$ and move it backwards (farther than $a_3$), and then the next (closer) component become moveable, we move it backwards and repeat this process until we move backwards the component closest to $a_i$. Then $m-1$ is even and thus $a_3=1$ can be pushed to $a_i=0$.\textbf{]}
Thus we get that all the augmented arms are unmoveable and thus the labeling is fixed.
\end{proof}

\begin{lemma}[Clear Path Lemma] \label{lem:clear-path}
A {\em clear path arm} is an arm in which we can push the closest component to $a_3$ to the direct neighbor of $a_3$ on that arm when $a_3=0$.
Then any non-fixed labeling is equivalent to a labeling in which each arm is a clear path arm.
\end{lemma}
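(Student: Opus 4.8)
The plan is to exploit the fact that the three arms at the central vertex interact only through $a_3$: when $a_3=0$, the move $\T_v$ at the first vertex $v$ of any arm ignores $a_3$ (its contribution to the neighbour-sum is $0$), so with $a_3$ held at $0$ each arm evolves exactly as the standalone rooted tree $T$ obtained by deleting $a_3$, rooted at the neighbour $v_1$ of $a_3$. I would therefore first reduce the global statement to a purely local one about a single rooted tree: if a configuration of $T$ is \emph{not} a fixed labeling of $T$, then the component of $1$'s nearest the root can be pushed until it occupies $v_1$. Producing the required $\aa'$ then amounts to arranging $a_3=0$ and making every arm satisfy this local condition.

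For the local claim I would work inward, from the nearest $1$ toward $v_1$. Along the trunk (the initial path of $T$ before any branch vertex) this is precisely the shrinking and pushing of Lemma~\ref{lem:basic-An}, whose canonical form $\xi_r$ already carries the leftmost component at the root. At a branch vertex I would use the degree-$3$ splitting/unsplitting analysed in Lemma~\ref{lem:vertex3}: bringing a component onto a branch vertex so that its neighbour-sum becomes odd and unsplitting routes a $1$ onto the trunk side, after which ordinary pushing resumes. Formally I would induct on the tree-distance from $v_1$ to the nearest $1$, each inductive step being a single push-or-unsplit that strictly decreases this distance.

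The main obstacle is the locally fixed ``split'' pattern at a branch vertex — a branch vertex labelled $0$ whose two further neighbours are labelled $1$ — which no move supported on the arm can alter while $a_3=0$; here the nearest material genuinely cannot be pushed to $v_1$ inside the arm. I would isolate, as an independent sub-fact, the statement actually invoked in Lemma~\ref{lem:central-vertex}, namely that if the nearest component cannot be pushed toward $a_3$ then the arm is a fixed labeling of $T$. Granting this, an arm fails to be clearable only when it is standalone-fixed; but then, since the whole labeling is not fixed, I may invoke Lemma~\ref{lem:central-vertex} to set $a_3=1$, flip $v_1$ to $1$, and feed this $1$ up the trunk so that the stuck branch vertex now sees an odd neighbour-sum, unsplits, and merges into a single component reaching $v_1$; I then reset $a_3=0$. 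This ordering avoids circularity, since Lemma~\ref{lem:central-vertex} depends only on the sub-fact just isolated, not on the full Clear Path Lemma.

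The step I expect to require the most care is the bookkeeping across the three arms: the transient change of $a_3$ used to unstick one arm must not un-clear another. I would control this by processing arms from the centre outward while maintaining the invariant that each already-cleared arm carries its nearest $1$ at $v_1$, so that the excursions $a_3\colon 0\to1\to0$ can be absorbed by shrinking and re-expanding that arm rather than by disturbing the others; equivalently, one drives $a_3$ to $1$ once, clears every arm, and only then returns $a_3$ to $0$. Stating this monotone ``clear from the centre'' procedure cleanly, and checking it terminates, is the delicate part of the argument.
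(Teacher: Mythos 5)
Your proposal follows essentially the same route as the paper's proof: fix $a_3=0$ via the Central Vertex Lemma, push the nearest component inward until a branch vertex with an even number of $1$-neighbours blocks it, observe that a blocked arm is unmoveable (so that if every arm is blocked the labeling is fixed), and unstick a blocked arm by setting $a_3=1$ and feeding a $1$ up to the offending branch vertex so that it unsplits and merges into a clear path. Your explicit isolation of the sub-fact ``stuck arm implies unmoveable arm'' to break the mutual citation between Lemma~\ref{lem:central-vertex} and Lemma~\ref{lem:clear-path}, and your attention to whether unsticking one arm un-clears another, are points the paper leaves implicit; they refine rather than replace its argument.
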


For example: let us denote label of the vertex $a_3$ by a bold number. We start with a labeling in which the long left arm is not a clear path,
\[
101\two{0}{1}0\two{\boldsymbol{0}}{1}01 \longmapsto 101\two{0}{1}1\two{\boldsymbol{1}}{1}01 \longmapsto 101\two{1}{1}1\two{\boldsymbol{1}}{1}01 \longmapsto 100\two{1}{0}1\two{\boldsymbol{1}}{1}01 \longmapsto 100\two{0}{0}0\two{\boldsymbol{0}}{1}01
\]
and end with a labeling in which all the arms are clear paths, as we can push the leftmost component and the rightmost component while the bottom component is already placed in a direct neighbor of $a_3$.
\[
100\two{0}{0}0\two{\boldsymbol{0}}{1}01 \longmapsto 000\two{0}{0}1\two{\boldsymbol{0}}{1}10
\]
Now all the closest components are placed in the direct neighbors of $a_3$ in each arm.

\begin{proof}[Proof of the Clear Path Lemma]
By Lemma \ref{lem:central-vertex} we may assume we start with a labeling $\aa$ such that $a_3=0$ and we change it to $a'_3=1$ when necessary.

In each arm we consider the closest component to $a_3$ (that is, the component with minimal of number of edges connecting between the vertex it is on to the vertex $a_3$). We say that a nonempty arm is a {\em clear path arm} if we can push the closest component to $a_3$ in that arm to the direct neighbor of $a_3$ in that arm. If the closets component to $a_3$ in the arm is a direct neighbor of $a_3$ then the arm poses no problem in the reduction step and it is a clear path arm.

Let us look on one arm and let $a_i=1$ be the vertex with component closest to $a_3$ on that arm (the arm is not necessarily $\AA_n$).
Then all the labels on the path between the closest component $a_i$ to $a_3$ are 0's. We expand the component towards the vertex $a_3$ until the neighbor of $a_3$ on that arm is labeled 1.
This cannot be done only if there is a branching vertex (i.e. a vertex of degree $d \ge 3$) in the path, call it $a_j$, such that an odd number $m < d$ of its neighbors has 1's in them (call them $a_{j+1},...,a_{j+m}$). We may assume that the distance (i.e. number of edges) between $a_{j+t}$ ($t=1,...,m$) to $a_3$ is equal to the distance between $a_i$ to $a_3$ otherwise any $a_{j+t}$ would be the closest component on that arm. We therefore have that the vertex $a_j$ has an even number $m+1$ of neighbors (including $a_i=1$) with 1's and we call them $a_{j-1}=a_i=1$ and $a_{j+1} = 1, ..., a_{j+m} = 1$.
For example, if $a_j$ is a vertex of degree 3 we have
\[
...a_i \ll \four{a_j}{|}{a_{j+1}}{\vdots} \ll 0...0 \ll a_3  = ...1\three{0}{1}{\vdots}0...0 \ll a_3
\]
If we can move the 1 in (at least) one of the $a_{j+t}$ backwards (e.g. when $a_{j+t}$ is moveable) then we can expand $a_i$ towards $a_3$. Indeed, without loss of generality suppose we can push $a_{j+m}=1$ backwards so $a_{j+m}=0$. Then
\[ a'_j = a_j + a_i + a_{j+1} + ... + a_{j+m} = a_j + 1 + (m-1) + 0 = a_j + m = a_j + 1 \]
as $m \bmod 2 = 1$ is odd. Then we do unsplitting at $a_j$, shrink the merged component so that $a_j=1$ and all of its neighbors are 0's and we have $a_j=1$ and all the vertices on the path between $a_j$ to $a_3$ are 0. Thus the arm becomes a clear path. \newline
If we cannot retreat the 1 in all the $a_{j+t}$ ($t=1,...,d$) or the 1 in $a_i$ farther than $a_3$ we get that the arm is unmoveable. If all the arms are unmoveable then the labeling is fixed. Therefore we assume that at least one arm is moveable.


So we now assume that ar least one arm is moveable. The moveable arm is an arm with clear path. If $a_3=1$ or an odd numbr of neighbors of $a_3$ are labeled 1 we proceed as below or as in case (i) in Part I of the main proof below. If $a_3=0$ we change it to $a'_3=1$ by Lemma \ref{lem:central-vertex}.
Therefore we can set $a'_3=1$ and expand the 1's to the unmoveable arm(s). Then in each unmovable arm we do the following sequence of moves (here the bold 1 denote it is the vertex $a_3$, the subscript $j$ on the branching vertex is to remind this is the vertex $a_j$, and $1_m$ denote the $m$ neighbors of $a_j$ with 1's, recall that $m$ is odd):
\[
...1\two{0_j}{1_m}0...0\boldsymbol{1} \longmapsto ...1\two{0_j}{1_m}1...1\boldsymbol{1} \longmapsto ...1\two{1_j}{1_m}1...1\boldsymbol{1} \longmapsto ...0\two{1_j}{0}1...1\boldsymbol{1} \longmapsto ...0\two{0}{0}0...1\boldsymbol{1}
\]
(the last step can be done as the labeling is not fixed and thus shrinking of components is always possible). Thus we reduce the number of components  with a non-fixed labeling. We repeat this process until there is a clear path between $a_{i+1}$ to $a_3$ with only 0 inbetween and without neighboring 1's in branching vertices along that path. We do this process to each unmoveable arm.

$\implies$ So we may assume for a non-fixed labeling that each arm has a ``clear path'' of 0's from its closest component to $a_3$, without 1's neighboring branching vertices along the path. Therefore, if the arm is not empty then the nearest component can be pushed to the direct neighbor of $a_3$ on that arm.
\end{proof}

\begin{lemma} \label{lem:1-2-comps-are-classes}
In a graph containing $\EE_6$ as a subgraph:
\begin{enumerate}
\item All the nonfixed labelings with 1 component are equivalent to each other and form the class $C_1$.
\item All the nonfixed labelings with 2 components are equivalent to each other and form the class $C_2$.
\end{enumerate}
\end{lemma}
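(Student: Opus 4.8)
The plan is to exhibit in each case a single canonical representative and to show that every non-fixed labeling of the stated type can be transported to it by a sequence of moves; mutual equivalence then follows by transitivity and symmetry of $\sim$. Throughout I write $a_3$ for the label of the central degree-$3$ vertex, and I use freely the Central Vertex Lemma (Lemma \ref{lem:central-vertex}) to force $a_3$ to any prescribed value and the Clear Path Lemma (Lemma \ref{lem:clear-path}) to assume each arm is a clear path, so that the component nearest $a_3$ on any arm may be pushed right up to the direct neighbour of $a_3$.

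For part (1) I take as canonical representative the labeling $\sigma_1$ carrying a single $1$ at $a_3$ and $0$ everywhere else. Given an arbitrary non-fixed one-component labeling, I first grow its block along a path toward $a_3$, switching on one $0$-vertex adjacent to the block at a time, until the block reaches and contains $a_3$. Along the degree-$1$ and degree-$2$ vertices traversed, Lemma \ref{lem:invartiant-comp} keeps the number of components equal to one; and the final step switching on $a_3$ itself has only one neighbouring $1$, so it merely appends $a_3$ to the block rather than splitting (the degree-$3$ analysis of Lemma \ref{lem:vertex3}). Once $a_3=1$, I retract the remaining $1$'s arm by arm into $a_3$, shrinking each arm from its far end (Lemma \ref{lem:basic-An}) and, with a single neighbouring $1$ left on the last arm, firing the move at that neighbour to clear it. This leaves $\sigma_1$, proving the first assertion.

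For part (2) I take as canonical representative the ``hub'' labeling $\sigma_2$ carrying a $1$ at each of the two neighbours of $a_3$ that lie on the two guaranteed arms of length $\ge 2$, with $a_3=0$ and all other labels $0$; this has exactly two components. The reduction has two stages. First, using Lemma \ref{lem:clear-path} together with the shrinking and sliding operations of Lemma \ref{lem:basic-An}, I shrink each component to a single $1$ and slide each singleton along its cleared arm until it occupies a neighbour of $a_3$, handling the two components one at a time and keeping the idle one parked on a long arm so that they never merge. Second, I show any two hub states are equivalent by relocating a singleton from one neighbour $x$ of $a_3$ to another neighbour $z$: with the other singleton parked at distance $2$ on a long arm distinct from the $z$-arm, I fire $\T_3$ (then $a_3$ has the single neighbouring $1$ at $x$, so the move acts), clear $x$ by $\T_x$, slide the resulting $1$ at $a_3$ out by $\T_z$, and fire $\T_3$ again to reset $a_3=0$; the parked singleton is untouched throughout. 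Hence every hub state, and so every non-fixed two-component labeling, is equivalent to $\sigma_2$.

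The main obstacle is precisely the bookkeeping in stage two: each time $a_3$ is switched on it threatens to merge the two components, which parity (Lemma \ref{lem:vertex3}) forbids and which would anyway destroy the reduction. The hypothesis that the central vertex carries two arms of length $\ge 2$ is exactly what resolves this, since it always furnishes a free parking lane of length $\ge 2$ on which one component can be held at distance $2$ from $a_3$ --- out of reach while the other is routed around the hub, and choosable on an arm distinct from both the source and the target neighbour because at least one of the two long arms is then still free. Confirming that such a parking arm is always available, and that the explicit toggle-and-slide sequences behave as claimed, is the one place where the three-arm geometry is used essentially; everything else reduces to the $\AA_n$-type manipulations of Section~\ref{sec:An}.
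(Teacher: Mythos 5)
Your part (1) is sound: in a tree, a vertex adjacent to the unique component of $1$'s has exactly one neighbour labelled $1$, so the component can indeed be grown until it contains $a_3$ and then retracted leaf by leaf to the singleton at $a_3$. This is a legitimate, more constructive route than the paper's own argument, which disposes of both statements in one stroke by observing that the moves that do not split act as on an $\AA_n$-type diagram and then citing Theorem \ref{th:basic-An}.

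Part (2), however, has a genuine gap: two of your explicit prescriptions fail on concrete configurations. First, stage one cannot always produce a hub state. If both singletons lie on the \emph{same} arm, packed at distances $1$ and $3$ from $a_3$, the second one cannot be slid to a neighbour of $a_3$ at all: the vertex at distance $2$ has two neighbouring $1$'s, so the move there is a no-op, and the input hypothesis of your stage two is never met. (This configuration is still reducible to $\sigma_2$, but only by first routing the inner singleton off the arm through $a_3$, which is exactly the step you have not justified for this case.) Second, the parking claim is not always realizable. If the two singletons occupy two neighbours of $a_3$, the opening $\T_3$ of your relocation sequence is a no-op (two neighbouring $1$'s), so one singleton must first be pushed to distance $2$; but if that singleton sits on an arm of length $1$ (a petal, which the branching vertex of degree $\ge 3$ may well carry, as in $\EE_6$ itself), it cannot leave distance $1$ except by itself being routed around the hub --- making your argument circular for hub states supported on short arms. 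Similarly, ``a long arm distinct from both the source and the target'' need not exist when only two arms of length $\ge 2$ are guaranteed. All of these cases can be patched (swap which singleton is relocated first, or park at distance $\ge 3$ on the source arm), but the patches require precisely the case analysis your write-up asserts away; as it stands, the proof of part (2) is incomplete.
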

\begin{proof}
Assume that the labeling is not fixed. In this case, in order to conserve the number of components, we do only moves which do not induce splitting, and then we have the set of allowed moves acting as in a $\AA_n$-type diagram. The assertion then follows from Theorem \ref{th:basic-An}.
\end{proof}

\begin{proof}[Proof of Theorem \ref{th:three-arms}]

In the first part of the proof we assume that the branching vertex coming from $\EE_6$ is exactly of degree $d=3$. In the second part we generalize to any degree $d$ of that vertex. In both part I and part II we shall denote this vertex and its label by $a_3$.

\medskip

\textbf{Part I:}

To simplify the proof we introduce the following terminology: let $L$ denote the longest arm with length $l \ge 2$, let $S$ denote the shorter arm with length $s \ge 2$ and let $U$ denote an arm with length $u \ge 1$ ($U$ stands for unsplitting arm). We shall denote the unsplitting vertex of degree 3 by $a_3$, its neighbor at $L$ (resp. $S$, resp. $U$) by $a_L$ (resp. $a_S$, resp. $a_U$). So schematically we have
\[
\sxymatrix{ L \rline & a_3 \dline \rline & S \\ & U & } \ , \qquad
 \sxymatrix{ \cdots \rline & a_L \rline & a_3 \dline \rline & a_S \rline & \cdots \\ & & a_U \dline  & & \\ & & \vdots  & & } \ .
\]
If an arm has no components, i.e. all the labels of its vertices are 0, we say that the arm is empty.

The proof is done by induction on the number of components. If there are 1 or 2 components there is nothing to prove (this is the \textbf{base step of the induction}) and by Lemma \ref{lem:1-2-comps-are-classes} all the labelings with 1 component are equivalent and all nonfixed labelings with 2 components are equivalent. Assume we have $r > 2$ components and assume we proved the assertion that the number of components can be reduced to 1 or 2 (depending on parity) for $r-1$ and $r-2$ components. Recall that moves preserve parity so if we manage to reduce the number of components by 2 we've done \textbf{the induction step} and then invoke the induction hypothesis on $r-2 < r-1 < r$ components. So we need to show that in all the cases of labelings which are not fixed labelings we can reduce the number of components by 2.

\textbf{Claim:} If a labeling in not a fixed labeling with $r \ge 3 > 2$ components, then we can reduce the number of components by 2 (by unsplitting at $a_3$).

To prove this claim we consider all possible cases. Given a (non-fixed) labeling we may assume that all the components are of minimal size 1 (in non-fixed labeling we can always change it to be of that state). By Lemma \ref{lem:clear-path} we may and will assume that in the non-fixed labelings we treat all the nonempty arms are clear path arms.
We now prove the claim by checking all possible cases.

\textbf{Case (i):} We assume each arm has at least one component and that the labeling is not a fixed labeling. If the labeling is not fixed we can change $a_3$ to be 0 by Lemma \ref{lem:central-vertex}. So we may assume $a_3=0$. By Lemma \ref{lem:clear-path} each arm is a clear path arm and since each arm is not empty then we can push the closest components to the vertices $a_L, a_S, a_U$ and then do unsplitting $\T_3$ as in Example \ref{exmp:pushing1}.

\textbf{Case (ii):} Assume $U$ is empty while $L$ and $S$ are not, and the labeling is not a fixed labeling. Assume $L$ has $p$ components and $S$ has $q$ components and (without loss of generality) $p > q$. If $a_3 = 1$ we can change the label $a_3$ to $a_3=0$ by Lemma \ref{lem:central-vertex}. We may assume $a_L=a_S=0$ (otherwise we swallow the closest 1's in each arm). Then apply $\T_U$ and $\T_3$. Then $U$ has 1 component and we proceed as in the previous case. So we now assume $a_3 = 0$. If $a_S=0$ we can move a component from $L$ to $U$. If $a_S=1$ we want to push the 1 in $a_S$ further into $S$ (farther than $a_3$ so $a_S=0$). This cannot be done only if $a_S=1$ and $S$ is unmoveable. If $a_L=0$ we can move a component from $S$ to $U$. This cannot be done if $a_L=1$ and $L$ is unmoveable. The empty $U$ is clearly unmoveable so we can't move components from either $L$ or $S$ only if the labeling is fixed, which we assume is not the case. Therefore, by Lemma \ref{lem:clear-path} we can push a component from some arm to $U$. We now have at least 1 component in each arm (if $S$ is unmoveable then $q \ge 2$ so $q-1 \ge 1$) so we are reduced to case (i).
See Example \ref{exmp:pushing2}.

\textbf{Case (iii):} Assume only $S$ is empty and that $U$ has at least $v \ge 1$ components and $L$ has $p \ge 2$ components. By Lemma \ref{lem:central-vertex} we may assume $a_3=0$. If the labeling is not fixed one can move a component from $U$ to $S$. Indeed, the only way not to be able to push a component from $U$ to $S$ is the to have the labeling
\[
...1 \three{0}{1}{\vdots}0...0
\]
with $L$ being unmoveable. In that case we shall move a component from $L$ to $S$ but this cannot be done only if the labeling is with $a_U=1$ and $U$ is unmoveable. So we get that we cannot do the reduction step only if $L$ and $U$ are unmoveable. But this is a fixed labeling (as $S$ is unmoveable per being empty), so the only case when we cannot perform the algorithm is excluded by our assumptions.
\[ ( \mbox{Example: if $U$ is of length 1 then we have fixed labeling} \quad  101...01 \two{0}{1}0...0 \ ) \]
If $U$ is still not empty, we can do unsplitting at $a_3$ and reduce the number of components by 2. If $U$ is now empty, we take the component $v$ and push it away from $a_S$ (this can be done since $S$ is longer than 2 and was empty). We then move the closest component from $L$ to $U$ (possible due to Lemma \ref{lem:clear-path}). Now we have $p-1 \ge 1$ components in $L$, 1 component in $S$ and 1 component in $U$. So we are again reduced to case (i).
See Example \ref{exmp:pushing4}.

\textbf{Case (iv):} the case where only $L$ is empty is handled the same way as the previous case.

\textbf{Case (v):} two arms are empty, without loss of generality assume $L$ has $p \ge 3 > 2$ components. By Lemma \ref{lem:central-vertex} we may assume $a_3=0$. We move one component from $L$ to $S$ and push it so that $a_S=0$. We move another component from $L$ to $U$. We have $p-2$ components in $L$, 1 component in $S$ and 1 component in $U$. We have at least one component in each arm so we are reduced again to case (i).
See Example \ref{exmp:pushing3}.

The cases (i)--(v) exhaust all the possible cases. In each case we manage to reduce the number of components by 2 and then invoke the induction hypothesis. Actually, the algorithm hidden in the induction formulation is applying repeatedly the algorithms of Examples \ref{exmp:pushing1}--\ref{exmp:pushing4} until we are left with 1 or 2 components. Then clearly no more unsplitting can be done, and by Lemma \ref{lem:1-2-comps-are-classes} we get to the equivalence class of $C_1$ or $C_2$ depending on parity. This concludes the proof of part I.

\medskip

Let us illustrate the reduction step (induction step) of the algorithm of the proof in four simple examples.

\begin{example} \label{exmp:pushing1}
Consider the diagram
\[
\sxymatrix{ \bc{1} \rline & \bc{2} \rline &  \bc{3} \rline & \bc{4} \rline  & \bc{5} \rline & \bc{6} \dline \rline & \bc{7} \rline & \bc{8} \\
                           &              &                &                &               & \bcu{9}        &               &       } ,
\]
and we start from the labaling $10000\two{0}{1}10$. Then
\[
10000\two{0}{1}10 \longmapsto 1111\two{0}{1}10 \overset{\T_6}{\longmapsto} 11111\two{1}{1}10 \overset{\T_6\T_4}{\longmapsto} 11111\two{1}{0}00 \overset{\T_6}{\longmapsto} 11111\two{0}{0}00 \longmapsto 10000\two{0}{0}00
\]
and we are reduced to one component from three.
\end{example}

\begin{example} \label{exmp:pushing2}
Now consider the same diagram with labeling $10100\two{0}{0}10$. Then
\[
10100\two{0}{0}10 \mapsto 10100\two{0}{0}01 \mapsto 10000\two{0}{1}01 \mapsto 00001\two{0}{1}10 \mapsto 00001\two{1}{1}10 \mapsto 00000\two{1}{0}00
\]
and we are reduced to one component from three.
\end{example}

\begin{example} \label{exmp:pushing3}
Now consider the same diagram with labeling $10101\two{0}{0}00$. Then
\[
10101\two{0}{0}00 \mapsto 10100\two{0}{0}01 \mapsto 10000\two{0}{1}01 \mapsto 00001\two{0}{1}10 \mapsto 00001\two{1}{1}10 \mapsto 00000\two{1}{0}00
\]
and we are reduced to one component from three.
\end{example}

\begin{example} \label{exmp:pushing4}
Now consider the same diagram with labeling $10100\two{0}{1}00$. Then
\[
10100\two{0}{1}00 \mapsto 10100\two{0}{0}01 \mapsto 10100\two{0}{0}01 \mapsto 10000\two{0}{1}01 \mapsto 00001\two{0}{1}10 \mapsto 00001\two{1}{1}10
\]
and we are reduced to one component from three.
\end{example}

\bigskip

\textbf{Part II:}

Assume now that the branching vertex coming from $\EE_6$ (we shall call it again $a_3$) is of degree $d > 3$. It has $d$ arms, at least two of them are in length $\ge 2$. As in Part I we use induction on the number of components $r$ to show every non-fixed labeling is equivalent to a labeling with 1 or 2 components depending on the initial parity. The base step is $r=1,2$ and then we have nothing to prove, thanks to Lemma \ref{lem:1-2-comps-are-classes}. The induction step is to reduce the number of components by 2 or more.

We assume that the labeling $\aa$ is not fixed and that $r \ge 3$. By Lemma \ref{lem:clear-path} we may assume that all the arms are clear paths and moveable.
\begin{itemize}
\item If $a_3=0$ and $a_3$ has an odd number $h_1 \ge 3 > 1$ of neighbors with 1's we do unsplitting at $a_3$ and reduced the number of components by $h_1-1$.
\item If $a_3=0$ and $a_3$ has only one neighbor with 1 in it we choose the arm with this 1 and two more arms with length $\ge 2$ such that together they have 3 or more components. Otherwise, $1 \le r \le 2$. We then use part I of the proof on the subgraph consisting of $a_3$ and the chosen 3 arms (we can do it as all the other neighbors of $a_3=0$ are 0 and thus $\T_{j,\mbox{full}} = \T_{j,\mbox{subgraph}}$ for all vertices $j$ in the subgraph, including $a_3$). This is the reduction step.
\item If $a_3=0$ has an even number $h_2 \ge 4 > 3$ (as we assume $r > 3$) of neighbors with 1's. In this case if one of the 1's can be moved backwards (i.e. farther than $a_3$ on its arm) we are reduced to the previous case with $h_1 = h_2 - 1$ being odd. If none of the 1's can be moved backwards then all the arms are unmoveable and since $\aa = \T_3(\aa)$ (this is because $a_3=0$ and it has an even number of neighbors with 1) we get that $\aa$ if fixed, which is excluded by our assumption.
\item If $a_3=0$ and all of its neighbors are 0's we choose 3 arms such that at least 2 of them are of length $\ge 2$ and together the 3 arms have at least 3 components. We can choose such arms since otherwise the total number of components is $0 \le r \le 2$. We then use part I of the proof on the subgraph consisting of $a_3$ and the chosen 3 arms (we can do it as all the other neighbors of $a_3=0$ are 0 and thus $\T_{j,\mbox{full}} = \T_{j,\mbox{subgraph}}$ for all vertices $j$ in the subgraph, including $a_3$). This is the reduction step.
\item If $a_3=1$ and the labeling is not fixed then by Lemma \ref{lem:central-vertex} it is equivalent to a labeling $\aa'$ with $a'_3=0$ and we are reduced to the previous case of the central label being 0.
\end{itemize}
Thus we proved that if the labeling $\aa$ is not fixed we can reduce its number of component by an even number until we are left with 1 or 2 components depending on the initial parity.

\end{proof}


\begin{corollary}
The diagrams $\EE_n$ and $\tilde{\EE}_n$ for $n=6,7,8$ satisfy the condition of Theorem \ref{th:three-arms} and therefore they have only 2 classes which are not classes of fixed labelings. These classes are the class of labelings with odd number of components which are not fixed labelings and the class of labelings with even number of components which are not fixed labelings.
\end{corollary}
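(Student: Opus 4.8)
The plan is to observe that this corollary is a pure verification: each of the six diagrams $\EE_6,\EE_7,\EE_8,\tilde{\EE}_6,\tilde{\EE}_7,\tilde{\EE}_8$ is to be checked against the hypotheses of Theorem~\ref{th:three-arms}, after which the conclusion is immediate. Recall that those hypotheses require $D$ to be (a) connected, simply-laced and circuitless, i.e.\ a tree, (b) to have $n\ge 4$ vertices, and (c) to possess a vertex of degree $3$ at least two of whose arms have length $\ge 2$.

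First I would dispose of the only point that is not completely transparent, namely that the affine diagrams $\tilde{\EE}_n$ are in fact trees. Among the affine Dynkin diagrams only those of type $\tilde{\AA}$ contain a cycle (cf.\ Section~\ref{sec:tilde-An}); the diagrams $\tilde{\EE}_6,\tilde{\EE}_7,\tilde{\EE}_8$ drawn in Sections~\ref{sec:tilde-E6}--\ref{sec:tilde-E8} are manifestly acyclic star-like graphs, and they are clearly connected and simply-laced. The ordinary diagrams $\EE_6,\EE_7,\EE_8$ are trees by inspection. All six have at least $6$ vertices, so conditions (a) and (b) hold.

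Next I would exhibit, for each diagram, the degree-$3$ vertex together with its two long arms. For $\EE_6$ the branching vertex has arms of lengths $2,2,1$; for $\EE_7$ the arms have lengths $3,2,1$; for $\EE_8$ the arms have lengths $4,2,1$. For the affine cases the branching vertex likewise has three arms: in $\tilde{\EE}_6$ of lengths $2,2,2$, in $\tilde{\EE}_7$ of lengths $3,3,1$, and in $\tilde{\EE}_8$ of lengths $5,2,1$. In every case at least two arms have length $\ge 2$, so condition (c) is satisfied.

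Finally, applying Theorem~\ref{th:three-arms} to each of the six diagrams yields that, apart from the classes of fixed labelings, there are exactly two classes, distinguished by the parity of the number of components, which is precisely the assertion. The only step demanding any care is the verification that the affine $\tilde{\EE}_n$ are acyclic, i.e.\ that the hypothesis ``$D$ is a tree'' genuinely applies to them; once this is granted the corollary is a direct specialization of the theorem, requiring no further computation.
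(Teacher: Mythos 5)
Your proposal is correct and matches the paper's (implicit) argument: the corollary is stated in the paper without proof as a direct specialization of Theorem \ref{th:three-arms}, and your case-by-case check of the hypotheses (tree, $\ge 4$ vertices, degree-$3$ vertex with two arms of length $\ge 2$), including the correct arm lengths for all six diagrams, is exactly the verification that is being left to the reader. Nothing further is needed.
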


The generalization to the cases when the diagram is not simply-laced is not difficult, but one should consider many cases. For example, if one of the ``tails'' is $\BB_n$ then the shorter vertex added another fixed labeling. If one of the ``tails'' is $\CC_n$ then there are 4 non-fixed-points classes: the class with odd number of components and the longer vertex is labeled 0, the class with even number of components and the longer vertex is labeled 0, the class with odd number of components and the longer vertex is labeled 1 and the class with even number of components and the longer vertex is labeled 1.

\bigskip

So far we only treated vertices of degree $1 \le d \le 3$. Let us treat general vertices of any degree.

\begin{proposition} \label{prop:general-d-parity}
Let $D$ be a simply-laced tree with a vertex $i$ of degree $d$ (i.e. it has $d$ neighbors). Then the move $\T_i$ preserve the parity of the number of components. More specifically, assume the vertex $i$ has $0 \le m \le d$ neighbors with label 1. That is: if $a_i=0$ then the subgraph of the vertex $i$ and its $d$ neighbors $D(i)$ has $m$ components. If $m$ is even then $\T_i$ does not change the number of components. If $m = 2k+1$ is odd then $\T_i$ change the number of components by $\pm (m-1) = \pm 2k$ and hence preserve the parity of the number of components.
\end{proposition}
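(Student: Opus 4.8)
The plan is to reduce everything to a single observation about the move rule combined with the acyclicity of $D$. By Definition~\ref{def:elem-moves} the move $\T_i$ alters only the label $a_i$, replacing it by $a_i + \sum_{k} a_k$ where $k$ ranges over the neighbors of $i$; writing $m$ for the number of neighbors carrying a $1$, this means that $a_i$ is changed precisely when $m$ is odd. So the first (and trivial) step is the even case: if $m$ is even then $\T_i(\aa)=\aa$, so the labeling, and in particular the number of components, is unchanged.

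The heart of the argument is the odd case $m = 2k+1$. Here I would first record the structural fact that makes trees special, exactly as in the degree-$3$ argument of Lemma~\ref{lem:vertex3}: if $u$ and $v$ are two distinct neighbors of $i$, then the only path in $D$ joining them is $u - i - v$, since any path avoiding $i$ would, together with the edges $ui$ and $iv$, produce a cycle and contradict acyclicity. Consequently the $m$ neighbors of $i$ that carry a $1$ lie in $m$ pairwise distinct components of the graph of $1$'s once the vertex $i$ is deleted.

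Next I would split into two sub-cases according to the value of $a_i$ before the move. If $a_i = 0$, then $i$ is absent from the graph of $1$'s and, by the previous paragraph, its $m$ labelled neighbors sit in $m$ separate components; flipping $a_i$ to $1$ inserts $i$ and joins all of these through $i$ into a single component, so the number of components drops by $m-1 = 2k$. If instead $a_i = 1$, the move is the exact reverse: deleting $i$ from the graph of $1$'s severs the single component containing it into the $m$ branches attached at its labelled neighbors, raising the count by $m-1 = 2k$. In either sub-case the change is $\pm(m-1) = \pm 2k$, an even number, so the parity of the number of components is preserved and the detailed claim follows.

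The only point requiring genuine care, as already in Lemma~\ref{lem:vertex3}, is the component count in the odd case: specifically the claim that the $m$ labelled neighbors genuinely lie in distinct components and that inserting or deleting $i$ merges or splits exactly those $m$ pieces and nothing else. This is where acyclicity is indispensable, and it is the natural generalization of the $d=3$ computation; the rest is bookkeeping. I would present the two sub-cases symmetrically, since they are inverse operations, to avoid duplicating the counting argument.
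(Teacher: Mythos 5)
Your proposal is correct and follows essentially the same route as the paper's own proof: the even case is dismissed because $a_i' = a_i + m = a_i$, and the odd case is handled by observing that splitting/unsplitting at $i$ changes the component count by exactly $\pm(m-1)=\pm 2k$. The only difference is that you spell out the acyclicity argument showing the $m$ labelled neighbors lie in $m$ distinct components (borrowed from the degree-$3$ case in Lemma~\ref{lem:vertex3}), a point the paper's proof leaves implicit; this is a welcome clarification rather than a divergence.
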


\begin{proof}
If $\aa' = \T_i(\aa)$ then
\[ a'_i = a_i + \sum_j a_j = a_i + m \ (\bmod 2) \ . \]

If $m = 2k$ is even then $a'_i = a_i$ and thus $\aa' = \aa$ and the parity of the number of components is preserved.

If $m=2k+1$ is odd and $\aa' = \T_i(\aa)$ then $a'_i =  a_i + 1$. If $a_i = 0$ then the subgraph of the vertex $i$ and its $n$ neighbors has $m$ components, but after unsplitting it has only 1 component (the 1 in the vertex $i$ merges all the components into a single component). Thus the number of components has changed by $\Delta = 1 - m = -2k$ which is an even number. For the inverse process, splitting, the change is $\Delta = m - 1 = 2k$ which is even. Thus we see that the parity of the number of components is preserved.
\end{proof}

%

\begin{definition} \label{def:flower}
Let $\maltese_d$ be the a simply-laced graph with $d+1$ vertices: one vertex of degree $d$ (we number it by 0) which have $d$ neighbors (numbered $1,2,...,d$), each of degree 1, called {\em petals}. The graph $\maltese_d$ is called a {\em flower diagram} or a {\em flower graph} with $d+1$ vertices and $d$ petals.
\end{definition}


\begin{proposition} \label{prop:d-vertex}
Let $\maltese_d$ be a flower diagram.  Then the possible numbers of components for the graph $\maltese_d$ can be $0$ to $d$. If $\maltese_d$ has a labeling with 1 component then it is equivalent to any labeling with $m \le d$ components where $m$ is odd.
\end{proposition}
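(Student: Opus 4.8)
The plan is to begin by writing down explicitly how the moves act on $\maltese_d$, since the whole proposition follows from their transparent behavior. Writing a labeling as $\aa = (a_0; a_1, \dots, a_d)$, with $a_0$ the central vertex and $a_1, \dots, a_d$ the petals, the central move $\T_0$ sends $a_0$ to $a_0 + \sum_{i=1}^d a_i$, so it flips the center exactly when an odd number of petals carry a $1$; a petal move $\T_i$ (for $i \ge 1$) sends $a_i$ to $a_i + a_0$, so it flips petal $i$ exactly when the center carries a $1$ and does nothing otherwise. Alongside this I would record the component count: when $a_0 = 0$ the $1$-petals are pairwise non-adjacent, so the number of components equals the number of petals labeled $1$ and ranges over $0, 1, \dots, d$; when $a_0 = 1$ the center together with all $1$-petals forms a single connected piece, so there is exactly one component. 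This already settles the first assertion, that the possible numbers of components are exactly $0, 1, \dots, d$.

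For the second assertion, I would first note that $\maltese_d$ is a tree with a single vertex of degree $d$, so by Proposition \ref{prop:general-d-parity} the parity of the number of components is preserved by every move; hence it suffices to show that all labelings with an odd number of components collapse to one equivalence class. The decisive reduction is: given a labeling with an odd number $m$ of components, if $a_0 = 1$ then $m = 1$ already, and if $a_0 = 0$ then exactly $m$ petals (an odd number) are labeled $1$, so applying $\T_0$ flips the center to $1$ and yields a labeling with center $1$, hence a single component. Thus every odd-component labeling is equivalent to one with $a_0 = 1$.

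It then remains to show that all center-$1$ labelings are mutually equivalent, and here the move formulas do the work. When $a_0 = 1$, each petal move $\T_i$ is a free toggle of petal $i$ that leaves the center and all other petals untouched, so it keeps us among center-$1$ labelings. Starting from any center-$1$ labeling I would toggle to $0$ each petal that carries a $1$, arriving at the canonical labeling $(1; 0, \dots, 0)$. Chaining the two reductions, every odd-component labeling is equivalent to $(1; 0, \dots, 0)$. In particular any $1$-component labeling (which is either a center-$1$ labeling or a center-$0$ labeling with a single $1$-petal) is equivalent to any labeling with $m \le d$ components for $m$ odd, as claimed.

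The argument is essentially routine once the two move formulas are in hand; the only point deserving care is the order of operations in the reduction, namely forcing the center to $1$ via $\T_0$ \emph{before} toggling petals, and checking that petal toggles never disturb the center. Both facts are immediate from the definitions, so no real obstacle arises beyond this bookkeeping.
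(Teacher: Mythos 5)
Your proof is correct and follows essentially the same route as the paper's: both rest on the observations that a petal can be toggled freely exactly when the centre is labelled $1$, and that $\T_0$ flips the centre exactly when an odd number of petals carry a $1$, so that every odd-component labeling is linked through the canonical labeling $(1;0,\dots,0)$. You merely run the chain in the collapsing direction (arbitrary odd-component labeling down to the canonical one) where the paper runs it in the spawning direction (one component up to $m$ components), and you make the move formulas and the first assertion about possible component counts more explicit than the paper does.
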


\begin{proof}
Let $\maltese_d$ be the flower graph with central vertex numbered 0. Assume we have 1 component. By a sequence of moves we can transform to a labeling with $a_i = 1$. Then we swallow all the 1's in the arms. We now have $a_i=1$ and $a_j = 0$ for every neighbor $j$ of $i$. Now we spawn 1's in $m$ neighbors of $i$ where $m$ is odd. Since $m$ is odd, by Proposition \ref{prop:general-d-parity}, we can do unsplitting and now we have $m$ components. The inverse process is clear. This is true for any odd $m$ and we get that every labeling with odd number of components in $D(i)$ is equivalent to the labeling with 1 component. Since this equivalence is an equivalence relation we get that all the labeling with odd number of components are equivalent to each other.
\end{proof}

\begin{lemma}[Well known]
\label{lem:binom1}
\begin{equation*}
\sum_{0 \le 2m \le d} {d \choose 2m} = \sum_{0 \le 2m+1 \le d} {d \choose {2m+1}} = 2^{d-1} \ .
\end{equation*}
\end{lemma}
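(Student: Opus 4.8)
The plan is to deduce both equalities from the binomial theorem applied to $(1+x)^d$ at the two values $x=1$ and $x=-1$. This is the cleanest route: it requires no combinatorial bijection and no induction, only two evaluations of a single polynomial identity.

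First I would record the two specializations. Setting $x=1$ in $(1+x)^d = \sum_{k=0}^{d} \binom{d}{k} x^k$ gives
\[
\sum_{k=0}^{d} \binom{d}{k} = 2^d,
\]
which is the sum of \emph{all} binomial coefficients. Setting $x=-1$ gives
\[
\sum_{k=0}^{d} (-1)^k \binom{d}{k} = (1-1)^d = 0,
\]
which says the alternating sum vanishes (for $d \ge 1$; the case $d=0$ is checked separately, where the odd-index sum is empty and the even-index sum is $\binom{0}{0}=1=2^{-1}\cdot 2$ under the convention, so one should state $d \ge 1$ in the statement if needed).

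Next I would combine the two displayed identities. Writing $E = \sum_{0 \le 2m \le d} \binom{d}{2m}$ for the even-index sum and $O = \sum_{0 \le 2m+1 \le d} \binom{d}{2m+1}$ for the odd-index sum, the first identity reads $E + O = 2^d$ and the second reads $E - O = 0$. Adding and subtracting these two linear equations yields $2E = 2^d$ and $2O = 2^d$, hence
\[
E = O = 2^{d-1},
\]
which is exactly the asserted equality.

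There is essentially no obstacle here, since the lemma is labelled ``well known'' and the argument is a two-line consequence of the binomial theorem; the only point requiring a word of care is the boundary behaviour at $d=0$, and whether the intended range of $d$ (it is used for vertex degrees $d \ge 1$ in Proposition \ref{prop:d-vertex} and Proposition \ref{prop:general-d-parity}) should be stated, so I would simply remark that the identity holds for all $d \ge 1$.
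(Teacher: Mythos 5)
Your proof is correct and follows essentially the same route as the paper's: both evaluate the binomial expansion at $x=1$ and $x=-1$ to obtain $E+O=2^d$ and $E-O=0$ and then solve for the two sums. Your added remark about the boundary case $d=0$ (versus $d\ge 1$) is a reasonable refinement but does not change the argument.
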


\begin{proof}
Denote $\lambda = \sum_{0 \le 2m \le d} {d \choose 2m}$.
Consider the binomial expansion
\[ (x + y)^d = \sum_{k=0}^d {d \choose k} x^k y^{d-k} \ . \]
We have
\[  0 = (-1 + 1)^d = \sum_{k=0}^d {d \choose k} (-1)^k = \sum_{0 \le 2m \le d} {d \choose 2m} - \sum_{0 \le 2m+1 \le d} {d \choose {2m+1}} \]
and thus
\[ \sum_{0 \le 2m+1 \le d} {d \choose {2m+1}} = \sum_{0 \le 2m \le d} {d \choose 2m} = \lambda   \ . \]
But
\[ 2^d = (1 + 1)^d =  \sum_{k=0}^d {d \choose k} =  \lambda + \lambda = 2 \lambda  \]
and thus $\lambda = 2^d/2 = 2^{d-1}$.
\end{proof}

\begin{theorem}[The Flower Diagram Theorem]
 The diagram $\maltese_d$ has the following classes:
\begin{itemize}
\item The zero class of the zero labeling (i.e. all labels are 0).
\item For every $0 < 2m \le d$ we have ${d \choose 2m}$ fixed labelings, each with $2m$ components. These labelings are all combinations of putting $2m$ components on the $d$ petals (and the central vertex's label is 0).
\item The class of all labelings which have odd number of components, represented by the labeling with central vertex with label 1 and petal vertices with labels 0.
\end{itemize}
The number of classes is
\begin{equation} \label{eq:flower-number}
\Cls{\maltese_d} = 1 + {d \choose 2} + ... + {d \choose { 2 \lfloor d/2 \rfloor} } + 1 = 2^{d-1} + 1
\end{equation}
\end{theorem}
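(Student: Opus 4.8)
The plan is to read the two kinds of moves off the diagram $\maltese_d$ explicitly, use them to determine exactly which labelings are fixed, compute the number of connected components of an arbitrary labeling, and then assemble the class count from Proposition~\ref{prop:d-vertex} and Lemma~\ref{lem:binom1}.

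First I would record how the moves act. Writing a labeling as $(a_0;a_1,\dots,a_d)$ with $a_0$ the central label and $m=\sum_{j=1}^d a_j$ the number of petals carrying a $1$, a petal move $\T_j$ ($1\le j\le d$) replaces $a_j$ by $a_j+a_0$, so it does nothing when $a_0=0$ and flips $a_j$ when $a_0=1$; the central move $\T_0$ replaces $a_0$ by $a_0+m$, so it does nothing when $m$ is even and flips $a_0$ when $m$ is odd. Next I would compute the number of components: if $a_0=0$ every $1$-petal is isolated, so the labeling has exactly $m$ components, whereas if $a_0=1$ the center joins all $1$-petals (and itself) into a single component, so the labeling has exactly $1$ component.

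From the move formulas, a labeling is fixed precisely when $a_0=0$ (so no $\T_j$ moves) and $m$ is even (so $\T_0$ does not move); equivalently, the fixed labelings are exactly those with $a_0=0$ and an even number $2m$ of $1$-petals, and such a labeling has $2m$ components. In particular the fixed labelings coincide exactly with the labelings having an even number of components, and by Lemma~\ref{lem:binom1} there are $\sum_{0\le 2m\le d}\binom{d}{2m}=2^{d-1}$ of them, namely $\binom{d}{2m}$ with exactly $2m$ components for each $0\le 2m\le d$. Since a fixed labeling is its own orbit, each is a singleton class: one is the zero class (the term $2m=0$) and the remaining $2^{d-1}-1$ are the nonzero fixed classes listed in the statement, grouped by their component count $2m$.

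It remains to treat the labelings with an odd number of components. A labeling with a single component exists (for instance $(1;0,\dots,0)$), so Proposition~\ref{prop:d-vertex} applies and shows that every labeling with an odd number of components is equivalent to it; hence they all form one class, which is disjoint from the fixed classes because the moves preserve the parity of the number of components (Proposition~\ref{prop:general-d-parity}) while every fixed labeling has even component count. Adding up, the total number of classes is $1+(2^{d-1}-1)+1=2^{d-1}+1$, matching \eqref{eq:flower-number}. I expect no serious obstacle, since the decisive step --- equivalence of all odd-component labelings --- is already supplied by Proposition~\ref{prop:d-vertex}; the only points requiring care are verifying that the even-component labelings coincide precisely with the fixed labelings and that the sum from Lemma~\ref{lem:binom1} is split correctly into the zero class and the $\binom{d}{2m}$-families.
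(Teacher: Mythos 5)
Your proposal is correct and follows essentially the same route as the paper: the odd-component labelings form one class by Proposition~\ref{prop:d-vertex}, the even-component labelings are exactly the fixed ones (each a singleton class), and the count $2^{d-1}+1$ comes from Lemma~\ref{lem:binom1}. Your explicit verification that the fixed labelings are precisely those with $a_0=0$ and an even number of lit petals is a slightly more careful phrasing of the same argument, not a different proof.
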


\begin{proof}
By Proposition \ref{prop:d-vertex} all labelings with odd number of components are equivalent. This accounts for the single class which is listed last in the theorem.

Now let $\aa$ be a labeling with $2m$ components and with $a_0=0$. Let us number the petals $a_1,...,a_d$. Clearly, applying a move on each petal does not change it as its only neighbor, $a_0$, is 0. It remains to check that $\T_0(\aa)=\aa$, but this is clear since
\[ \aa'_0 = (\T_0(\aa))_0 = a_0 + \sum_{i=1}^d a_i = a_0 + 2m \bmod 2 = a_0 = (\aa)_0 \ . \]
Thus, a labeling with even number of components is fixed.
How many such labelings do we have? The answer here is combinatoric: choose $2m$ petals out of the possible $d$ petals to put 1's in, there are ${d \choose 2m}$ such options.

Finally, we run over the number of all possible even numbers smaller than $d$ and thus we account of all the classes. This leads to \eqref{eq:flower-number} -- the leftmost 1 is $1 = {d \choose 0}$ and the rightmost 1 is the class with odd number of components. The rightmost equality in \eqref{eq:flower-number} follows from Lemma \ref{lem:binom1}.
\end{proof}

\begin{example}
Consider the graph $\maltese_4$ given by
\begin{equation}
\sxymatrix{ & \bc{1} \dline & \\ \bc{2} \rline & \bc{0} \dline \rline & \bc{3} \\ & \bcu{4} & }
\end{equation}
Then its has 9 classes as follows:
\begin{enumerate}
\item[(1)] The fixed labeling $$ \sxymatrix{ & 0 \dline & \\ 0 \rline & 0 \dline \rline & 0 \\ & 0 & } $$
\item[(2)-(7)] The following 6 fixed labelings
\begin{eqnarray*}
 \sxymatrix{ & 1 \dline & \\ 1 \rline & 0 \dline \rline & 0 \\ & 0 & }  \quad
 \sxymatrix{ & 1 \dline & \\ 0 \rline & 0 \dline \rline & 1 \\ & 0 & }  \quad
 \sxymatrix{ & 1 \dline & \\ 0 \rline & 0 \dline \rline & 0 \\ & 1 & }  \quad
 \sxymatrix{ & 0 \dline & \\ 1 \rline & 0 \dline \rline & 1 \\ & 0 & }  \quad
 \sxymatrix{ & 0 \dline & \\ 1 \rline & 0 \dline \rline & 0 \\ & 1 & }  \quad
 \sxymatrix{ & 0 \dline & \\ 0 \rline & 0 \dline \rline & 1 \\ & 1 & }
\end{eqnarray*}
\item[(8)] The fixed labeling $$\sxymatrix{ & 1 \dline & \\ 1 \rline & 0 \dline \rline & 1 \\ & 1 & }$$ with 4 components
\item[(9)] The class represented by $$\sxymatrix{ & 0 \dline & \\ 0 \rline & 1 \dline \rline & 0 \\ & 0 & }$$ of all labelings which have 1 or 3 components.
\end{enumerate}

\end{example}

\end{document}